\newcommand{\bbD}{\mathbb{D}}
\newcommand{\bbN}{\mathbb{N}}
\newcommand{\bbZ}{\mathbb{Z}}
\newcommand{\bbQ}{\mathbb{Q}}
\newcommand{\bbR}{\mathbb{R}}
\newcommand{\bbC}{\mathbb{C}}
\newcommand{\bbP}{\mathbb{P}}
\newcommand{\mcF}{\mathcal{F}}
\newcommand{\eps}{\varepsilon}
\newcommand{\Var}{\mathop{\mathrm{Var}}}
\newcommand{\dist}{\mathop{\mathrm{dist}}\nolimits}
\newcommand{\NE}{\mathrm{NE}}
\newcommand{\Diff}{\mathrm{Diff}}
\newcommand{\id}{\mathrm{id}}
\newcommand{\Dirac}{\mathrm{Dirac}}
\newtheorem{theorem}{Theorem}[section]
\newtheorem{thmain}{Theorem}
\newtheorem{proposition}[theorem]{Proposition}
\newtheorem{corollary}[theorem]{Corollary}
\newtheorem{lemma}[theorem]{Lemma}
\newtheorem{question}[theorem]{Question}
\newtheorem{conjecture}[theorem]{Conjecture}
\newtheorem{example}[theorem]{Example}
\theoremstyle{definition}
\newtheorem{definition}[theorem]{Definition}
\newtheorem{remark}[theorem]{Remark}
\newcommand{\ctg}{\mathop{\mathrm{ctg}}}
\newcommand{\arcctg}{\mathop{\mathrm{arcctg}}}
\newcommand{\supp}{\mathop{\mathrm{supp}}}
\newcommand{\wtp}{\widetilde{\varphi}}
\newcommand{\Ls}{\mbox{\rm($\Lambda\star$)}}
\newcommand{\smin}{\mbox{\rm($\star$)}}
\newcommand{\Leb}{\mathrm{Leb}}
\newcommand{\PSL}{\mathrm{PSL}}
\date{\today}
\title{On the question of ergodicity for minimal group actions on the circle}
\author{Bertrand Deroin, Victor Kleptsyn, Andr\'es Navas}
\begin{document}
\maketitle
\begin{abstract}
This work is devoted to the study of minimal, smooth actions of finitely generated groups 
on the circle. We provide a sufficient condition for such an action to be ergodic (with
respect to the Lebesgue measure), and we illustrate this condition by studying two
relevant examples. Under an analogous hypothesis, we also deal with the problem of the
zero Lebesgue measure for exceptional minimal sets. This hypothesis leads to many other interesting conclusions, mainly concerning the stationary and conformal measures. Moreover, several questions are left open. The methods work as well for codimension-one foliations, though the results for this case are not explicitly stated.
\end{abstract}

\tableofcontents

\section{Introduction}

\subsection{Minimality and ergodicity}

An invariant probability measure $\mu$ (for a map, or for a group action) is said to be \emph{ergodic}
if every invariant measurable set is either of zero or full $\mu$-measure. This is equivalent to the
fact that every invariant measurable function is constant $\mu$-a.e., and also to the fact that
$\mu$ is an extremal point of the compact, convex set formed by the invariant probability measures.

The definition of ergodicity can be naturally extended to non necessarily invariant
measures $\mu$ which are at least \emph{quasi-invariant}, that is, such that $g_*\mu$
is absolutely continuous with respect to $\mu$ for every element $g$ in the acting group. (To simplify the exposition, all the measures in this article are supposed to be probability measures.)

\begin{definition}
Let $\mu$ be a measure on a measurable space $X$ which is quasi-invariant by 
the action of a group $G$. We say that $\mu$ is \emph{ergodic} if for every measurable 
$G$-invariant subset $A\subset X$ either $\mu(A)=0  \text{ or }  \mu(X\setminus A)=0.$
\end{definition}

Notice that the definition of ergodicity concerns both the action and the measure. 
However, for several actions an invariant (or quasi-invariant) measure is naturally 
defined. For instance, symplectic maps or Hamiltonian flows and their restrictions 
to fixed energy levels have natural invariant measures, and for 
any $C^1$ diffeomorphism the Lebesgue measure is quasi-invariant. In these 
situations, one focuses on the action itself, and the ergodicity is always meant 
with respect to this natural measure.

Ergodicity can be thought of as a property involving some complexity for the orbits of the
action coming from the fact that this action is irreducible from a measurable point of
view. The topological counterpart to this notion corresponds to \emph{minimality}:

\begin{definition}
A continuous action of a group $G$ on a topological space $X$ is said to be
\emph{minimal} if for every $G$-invariant closed subset $A\subset X$ either
$A=\emptyset \, \text{ or } \, A=X$. Equivalently, an action is minimal
if all of its orbits are dense.
\end{definition}

It is natural to ask until what extend the properties of ergodicity and minimality are related.
In one direction, it is easy to see that, in general, the former does not imply the latter.
Indeed, ergodicity concerns the behavior of almost every point, and not of \emph{all} the points.
Actually, one can easily construct examples of ergodic group actions having global
fixed points. The question in the opposite direction is more interesting:

\begin{question}
Under what conditions a smooth action of a group on
a compact manifold is necessarily Lebesgue-ergodic?
\end{question}

The following widely known conjecture concerns the one-dimensional case of this question.
The main result of this work, namely Theorem~\ref{description} later on, allows to solve it in the 
affirmative under some additional assumptions that seem to us interesting and sufficiently mild.

\begin{conjecture}
Every minimal smooth action of a finitely generated group on the circle is ergodic with respect
to the Lebesgue measure.\footnote{One may also ask about ergodicity for smooth actions of
finitely generated groups on the circle or the interval having a dense orbit. However,
we will not deal with this more general question in this work.}
\label{conj}
\end{conjecture}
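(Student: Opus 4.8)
The plan is to split the problem along the standard dichotomy for minimal actions on the circle: either the action admits an invariant probability measure, or it does not.

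\emph{The elementary case.} If the action preserves a probability measure $\nu$, then by minimality $\nu$ has full support and no atoms, so the map $x \mapsto \nu([0,x])$ is a homeomorphism conjugating the action to a group of rotations. Minimality forces this group to be dense, hence to contain an irrational rotation, and the action is therefore ergodic with respect to $\nu$. To upgrade this to ergodicity with respect to $\Leb$, I would try to show that $\nu$ is equivalent to $\Leb$, equivalently that the linearizing conjugacy above is absolutely continuous. This is a linearization/regularity problem and, as I note below, is one of the genuine obstacles.

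\emph{The non-elementary case.} Here I would pass to a random-walk description. Choose a finitely supported probability measure $\mu$ on $G$ whose support generates $G$, and study the independent compositions $g_n \circ \cdots \circ g_1$ of randomly chosen generators. For such actions one has a contraction phenomenon: for $\mu^{\otimes \bbN}$-almost every sequence the composition contracts all but a vanishing part of the circle exponentially fast toward a single point. Reading this backwards along the path produces, for a given point $x$ and a small interval $I \ni x$, group elements $h$ that expand $I$ onto an interval of definite size bounded away from the whole circle. With these expanding elements in hand I would run a density-point argument. Assume $A$ is $G$-invariant with $0 < \Leb(A) < 1$, and let $x$ be a Lebesgue density point of $A$. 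Choosing intervals $I_n \downarrow x$ and elements $h_n$ with $|h_n(I_n)|$ uniformly bounded below and above, invariance gives $h_n(A \cap I_n) = A \cap h_n(I_n)$. If each $h_n$ has bounded distortion on $I_n$, namely if $\Var_{I_n} \log |h_n'|$ stays uniformly bounded, then the density of $A$ in $I_n$, which tends to $1$, is transferred to the definite-size interval $h_n(I_n)$; a compactness and covering argument then forces $\Leb(A) = 1$, and the symmetric argument at a density point of the complement rules out the intermediate case.

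\emph{The main obstacle.} The crux is the uniform control of the distortion $\Var_{I_n} \log |h_n'|$ along the expanding sequences. Since $h_n$ is a long composition, this variation is a sum of contributions of the individual generators along the orbit of $I_n$, and its convergence is exactly what can fail: without extra regularity or a quantitative control on how the derivatives grow, the distortion may blow up and the transfer of density breaks down. This is precisely the point at which additional hypotheses (conditions such as \smin and \Ls) must be invoked, and it is why the statement remains a conjecture in full generality. In the elementary case the parallel obstacle is the absolute continuity of the linearizing conjugacy, which likewise need not hold for a merely smooth minimal action preserving a singular invariant measure.
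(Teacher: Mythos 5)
The statement you were asked to prove is Conjecture~\ref{conj}, which the paper does \emph{not} prove: it remains open, and the paper's Theorem~\ref{description} establishes it only under the additional hypothesis of property~\smin. So your proposal, which is a strategy sketch ending in an acknowledged gap, cannot be faulted for failing to close the argument; and in fact your non-elementary case is essentially the paper's own program. The expansion-plus-density-point scheme you describe is exactly the ``exponential expansion strategy'' of Section~\ref{ss:minimality}, and the crux you isolate --- uniform control of $\Var_{I_n}\log|h_n'|$ along the expanding compositions, equivalently a bound on the sum of intermediate derivatives relative to the final derivative --- is precisely what the paper formalizes: property~\smin\ is invoked only to obtain such control near the non-expandable points (via the parabolic ``exit maps'' in Lemmas~\ref{l:one-step} and~\ref{l:derivatives-quotient} and Proposition~\ref{p:final}), and the paper's Question~\ref{q:C-expansion} (``is Lebesgue-a.e.\ point $C$-distortion-expandable?'') is a verbatim formulation of your ``main obstacle''. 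Your route to producing expanding elements (time-reversal of the random contraction of Antonov/Baxendale/\cite{DKN}) also mirrors the mechanism behind the paper's Theorem~\ref{thm:positive}, though that argument only yields expansion with positive exponent, not the distortion bound, which is exactly where it stops short.

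There is, however, one genuine error: your treatment of the elementary (invariant-measure) case misidentifies the obstacle. Absolute continuity of the linearizing conjugacy is \emph{not} needed there, and indeed typically fails; nevertheless the case is completely settled in the regularity considered here. If a finitely generated group acts minimally and preserves a probability measure, it is topologically conjugate to a dense group of rotations, and since a finitely generated torsion subgroup of $SO(2)$ is finite, some element $g$ must correspond to an irrational rotation; this $g$ is then a minimal circle diffeomorphism of class $C^2$, and the theorems of Katok ($C^{1+bv}$) and Herman ($C^{1+lip}$) cited in Section~\ref{ss:minimality} give Lebesgue-ergodicity of $g$ alone --- by distortion estimates coming from the rational approximations of the rotation number, not by showing the conjugacy absolutely continuous. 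Since any $G$-invariant measurable set is in particular $g$-invariant, the whole action is ergodic. So the invariant-measure case is closed, and the conjecture is open only in the non-elementary case, where your diagnosis of the difficulty is the correct one.
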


Conjecture~\ref{conj} has been answered by the affirmative in many cases by the exponential expansion strategy (largely going back to D. Sullivan). We will recall this strategy in Section~\ref{s:background}. Here we content ourselves in recalling the definition of Lyapunov expansion exponent in order to state the main result which is known in this direction.  For simplicity of the exposition, from 
now on we assume that the diffeomorphisms in our group~$G$ 
preserve the orientation. This assumption is non-restrictive, as otherwise one can pass (without 
loosing the minimality) to the index-two subgroup formed by the orientation preserving elements.

\begin{definition}
Let~$G$ be a finitely generated group of circle diffeomorphisms. Let 
$\mcF=\{g_1,\dots,g_k\}$ be a finite set of elements generating $G$ 
as a semigroup, and let $\left\|\cdot\right\|_{\mcF}$ be the corresponding 
word-length norm. The \emph{Lyapunov expansion exponent} of $G$ at a point $x\in S^1$ is
$$\lambda_{exp}(x;\mcF):=\limsup_{n\to\infty} \max_{\|g\|_{\mcF}\le n} \frac{1}{n} \, \log \big( g'(x) \big).$$
\end{definition}

Notice that the value of the Lyapunov expansion exponent~$\lambda_{\exp}(x;\mcF)$ 
depends on the choice of the finite system of generators
$\mathcal{F}$. However, the fact that this number is equal
to zero or is positive does not depend on this choice. Thus, relations of the form $\lambda_{exp}(x)=0$
or $\lambda_{exp}(x)>0$ make sense, although the number $\lambda_{exp}(x)$ is not well-defined without referring to~$\mcF$.

\begin{theorem}[S.~Hurder~\cite{Hurder}]\label{thm:expansion}
If $G$ is a finitely generated subgroup of $\Diff_+^{1+\alpha}(S^1)$ acting minimally, then the
Lyapunov expansion exponent is constant Lebesgue-almost everywhere. If this constant is positive,
then the action is ergodic.
\end{theorem}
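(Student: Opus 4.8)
The plan is to reduce both assertions to a single \emph{expansion lemma}: every $G$-invariant measurable set $A\subset S^1$ with $\Leb(A)>0$ that contains a Lebesgue density point $x_0\in A$ with $\lambda_{exp}(x_0;\mcF)>0$ necessarily satisfies $\Leb(A)=1$. First I would record two elementary facts. The function $x\mapsto\lambda_{exp}(x;\mcF)$ is Borel, being a $\limsup$ of the continuous functions $\frac1n\max_{\|g\|\le n}\log g'(x)$; and it is $G$-invariant. The invariance follows from the chain rule: for $h\in G$ one has $\log g'(hx)=\log(gh)'(x)-\log h'(x)$, and since $\mcF$ generates $G$ as a semigroup the word-lengths obey $\|gh\|_{\mcF}\le\|g\|_{\mcF}+\|h\|_{\mcF}$ (and symmetrically for $h^{-1}$), so the bounded additive error $\log h'(x)$ and the bounded shift of $n$ wash out after dividing by $n$. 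Granting the expansion lemma, the theorem follows immediately. Applying it to the invariant level sets $B_c=\{\lambda_{exp}(\cdot;\mcF)>c\}$ for $c\ge 0$ shows each is null or conull, since almost every point of $B_c$ is a density point lying in $B_c$ and hence has exponent $>c\ge0$; as $\lambda_{exp}$ takes values in $[0,\infty)$, this forces it to equal a single constant almost everywhere, namely $\inf\{c\ge0:\Leb(B_c)=0\}$. If this constant is positive then $\lambda_{exp}>0$ a.e., so \emph{every} positive-measure invariant set $A$ has almost every point simultaneously a density point and of positive exponent; the lemma then gives $\Leb(A)=1$, which is exactly ergodicity.

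For the expansion mechanism itself, since $\lambda:=\lambda_{exp}(x_0;\mcF)>0$ I would choose words $g_n$ with $\|g_n\|_{\mcF}=N_n\to\infty$ and $\frac1{N_n}\log g_n'(x_0)\to\lambda$, so $g_n'(x_0)\to\infty$. Pick intervals $I_n\ni x_0$ shrinking to $x_0$ and calibrated so that the images $J_n:=g_n(I_n)$ have length bounded above and below by fixed positive constants (possible by the intermediate value theorem). Because $x_0$ is a density point of $A$, one has $\Leb(A\cap I_n)/|I_n|\to1$; if the $g_n$ enjoy \emph{uniformly bounded distortion} on $I_n$, then, $A$ being invariant so that $g_n(A\cap I_n)\subset A$, the density transfers: $\Leb(A\cap J_n)/|J_n|\to1$. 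Extracting a subsequence along which $J_n$ converges to an interval $J$ of positive length, $A$ has full density in $J$, i.e. $\Leb(J\setminus A)=0$. Finally minimality propagates this everywhere: each image $g(J)$ satisfies $g(J)\setminus A=g(J\setminus A)$ up to a null set, and by minimality finitely many translates $g_1(J),\dots,g_m(J)$ cover $S^1$, forcing $\Leb(S^1\setminus A)=0$.

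The distortion estimate is the hard part, and this is where the $C^{1+\alpha}$ hypothesis enters. Writing $g_n=f_{i_{N_n}}\circ\cdots\circ f_{i_1}$ and $I_n^{(j)}=f_{i_j}\circ\cdots\circ f_{i_1}(I_n)$, the classical bound reads
$$\sup_{y,z\in I_n}\Big|\log\frac{g_n'(y)}{g_n'(z)}\Big|\le C\sum_{j=0}^{N_n-1}\big|I_n^{(j)}\big|^{\alpha},$$
with $C$ a uniform Hölder constant for the finite family $\{\log f_i'\}$. The delicate point I expect to be the technical core is to make this sum bounded \emph{uniformly in} $n$: the $\limsup$ defining $\lambda_{exp}$ only guarantees large total expansion of $g_n$, not uniform backward contraction along the intermediate intervals $I_n^{(j)}$, and mere disjointness of those intervals does not control $\sum_j|I_n^{(j)}|^{\alpha}$ (many small intervals can still make it diverge). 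The standard remedy is to convert the expansion at $x_0$ into genuine geometric decay of the backward images through a Pliss-type selection of good hyperbolic times, or by restricting to a scale at which all intermediate images stay below a fixed small threshold, so that $\sum_j|I_n^{(j)}|^{\alpha}$ is dominated by a convergent geometric series independent of $n$. This is precisely the step where positivity of $\alpha$ is indispensable, and where the bulk of the work lies.
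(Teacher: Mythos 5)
First, a point of comparison: the paper does not actually prove this statement --- it is quoted from Hurder's work \cite{Hurder}, and the paper only sketches the underlying ``exponential expansion strategy'' in Section~2.1. Your outer scheme coincides with that sketch: reduce everything to an expansion lemma (a $G$-invariant set of positive measure containing a density point with positive exponent must have full measure), derive the almost-everywhere constancy by applying this lemma to the invariant level sets $B_c=\{\lambda_{exp}(\cdot;\mcF)>c\}$, and obtain ergodicity when the constant is positive. That reduction is correct and cleanly executed: the Borel measurability and $G$-invariance of $\lambda_{exp}$, the null-or-conull dichotomy for each $B_c$, the transfer of density under a uniform distortion bound, the extraction of a limiting interval of definite length on which $A$ has full measure, and the finite-cover argument from minimality are all sound.

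The gap is exactly where you say it is, and it is genuine: the uniform bound on $\sum_j |I_n^{(j)}|^{\alpha}$, i.e.\ the conversion of a positive $\limsup$ exponent \emph{at a single point} into compositions with uniformly bounded distortion on intervals whose images have macroscopic size. This is not a routine verification; it is the actual content of Hurder's theorem, and your proposal replaces it with a pointer (``Pliss-type selection of good hyperbolic times''). The pointer is the right one: from words $g_n$ of length $N_n$ with $\log g_n'(x_0)\ge N_n\lambda/2$, Pliss's lemma applied to the increments $\log f_{i_j}'(x_{j-1})$ (which are bounded above by a constant depending only on $\mcF$) yields a positive density of times $k$ at which every tail of the composition expands at a definite exponential rate, and at such times an inductive bootstrap --- choose the final scale $\delta$ small in terms of $\lambda$, $\alpha$, and the H\"older constants of the $\log f_i'$, pull back $U_{\delta}$ step by step, and use the distortion bound established so far to convert pointwise backward contraction into geometric decay of the intermediate intervals, which in turn keeps the distortion bounded at the next step. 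But this bootstrap must actually be performed: the circularity to be broken (interval lengths are controlled by derivatives only once distortion is bounded, while distortion is bounded only once interval lengths decay geometrically) is the whole difficulty, and ``restricting to a scale at which all intermediate images stay below a fixed small threshold'' describes the desired conclusion rather than an argument. As written, your proposal proves the theorem modulo precisely the step that makes it a theorem. Note also that this is where your route necessarily departs from the machinery the paper does develop: the distortion estimates of its Section~4 control $\sum_j |I_j|$ (first powers of lengths) by sums of derivatives, a device adapted to $C^2$ regularity and to the paper's property~\smin{} setting, whereas your H\"older-power sum in the $C^{1+\alpha}$ category requires the hyperbolic-times mechanism instead.
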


This constant is called the Lyapunov expansion exponent $\lambda_{\exp}(G;\mcF)$ of the action. As before, it depends on the particular choice of the system of generators~$\mcF$, though the relations of the form $\lambda_{\exp}(G)>0$ or $\lambda_{\exp}(G)=0$ make sense without referring to~$\mcF$.

A very simple compactness type argument shows that $\lambda_{\exp}(G)>0$ in the case
where for all $x\in S^1$ there exists $g\in G$ such that $g'(x)>1$. Actually, the 
ergodicity of minimal $C^{1 + \alpha}$ actions satisfying the latter condition was 
proved earlier in~\cite{Navas-ens}. 

All of this serves as a good motivation for the following

\begin{definition}
A point $x \!\in\! S^1$ is said to be \emph{non-expandable} if for all $g\in G$
one has $g'(x)\le 1$. 
\end{definition}

One should immediately point out that the presence of non-expandable points 
does not contradict the minimality. For instance, the canonical action 
of the modular group $\PSL_2(\bbZ)$ on $S^1=\bbP(\bbR^2)$ is minimal, 
but the points $(0:1)$ and $(1:0)$ are non-expandable (see Section \ref{ss:PSL-2-Z}). 
Another example is provided by the smooth actions of Thompson group~$T$ 
constructed by \'E.~Ghys and V.~Sergiescu (see Section~\ref{ss:Thompson}). 
However, the non-expandable points represent a (potential) obstacle for performing 
the exponential expansion strategy.

Denote the set of non-expandable points by $\NE = \NE(G)$. Notice that this 
set depends on the coordinates chosen on the circle. Thus, we suppose a metric 
on the circle to be fixed, and we will discuss the dependence of the $\NE$-set on the 
metric later (see Corollary~\ref{cor:independence}). The assumption of 
our main result is given by the next


\begin{definition}
The group $G$ satisfies \emph{property~\smin} if it acts minimally and for 
every $x\in \NE(G)$ there exist $g_+,g_-$
in $G$ such that $g_+(x)=g_-(x)=x$ and $g_+$ (respectively,~$g_-$) has no other
fixed point in some interval $(x,x+\eps)$ (respectively, $(x-\eps,x)$).
\end{definition}

\begin{remark} 
Notice that property~\smin\ holds if for every $x\in \NE$ there exists an element $g\in G$
such that~$x$ is an isolated fixed point of~$g$. In particular, if $G$ is a group of
real-analytic diffeomorphisms, then property~\smin\ is equivalent to:
$$\mbox{for all } x \in \NE(G)  \mbox{ there exists }
g \in G \mbox{ such that } g \neq id \mbox{ and } g(x)=x.$$
Indeed, every fixed point of a nontrivial analytic diffeomorphism is isolated.
\end{remark}

We are now ready to state the main result of this paper.
In order to simplify our discussion\footnote{The reader will easily check that, as it is usual in the subject,
the methods and results in this work also apply to groups of diffeomorphisms of class $C^1$ having
Lipschitz derivative.}, we will only deal with finitely generated groups of circle diffeomorphisms that are of class~$C^2$.

\begin{thmain}\label{description} 
If $G$ is a finitely generated subgroup of
$\Diff^2_+(S^1)$ satisfying property~\smin, then the following hold:
\begin{enumerate}
\item $\NE(G)$ is finite.
\item For every point $x\in S^1$ either the set of derivatives $\{g'(x)\mid g\in G\}$
is unbounded, or $x$ belongs to the orbit of some non-expandable point.
\item $G$ is ergodic with respect to the Lebesgue measure.
\end{enumerate}
\end{thmain}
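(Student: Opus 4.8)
The plan is to derive statement~(3) from~(1) and~(2) together with the classical exponential-expansion strategy, so I would organize the proof around first establishing finiteness of $\NE(G)$, then the dichotomy in~(2), and finally ergodicity. The guiding principle is that, once $\NE(G)$ is finite, its $G$-orbit is countable and hence Lebesgue-null, so by~(2) Lebesgue-almost every point has unbounded derivatives; the remaining task is to convert ``unbounded derivatives'' into genuine expansion with controlled distortion, which is what feeds the ergodicity argument.

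For part~(1), I would use property~\smin\ locally at each $x\in\NE(G)$. The element $g_+$ fixes $x$ with no other fixed point in $(x,x+\eps)$, so on that interval $g_+$ is monotone without fixed points; replacing $g_+$ by its inverse if necessary, one side of $x$ is topologically contracted toward $x$. Since $x\in\NE$ forces $g_+'(x)\le 1$, there are two cases. In the \emph{hyperbolic} case $g_+'(x)<1$ one has $(g_+^{-1})'(x)=1/g_+'(x)>1$, and by continuity of the derivative $(g_+^{-1})'>1$ on a full two-sided neighborhood of $x$; every point there is therefore expandable, so $x$ is isolated in $\NE(G)$. The \emph{parabolic} case $g_+'(x)=1$ is the delicate one: here I would invoke the $C^2$ regularity, using a Denjoy/control-of-distortion estimate to rule out an accumulation of non-expandable points near a parabolic fixed point (an accumulation would produce a forbidden wandering behaviour or an interval of expansion incompatible with all nearby points lying in $\NE$). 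Compactness of $S^1$ then yields finiteness. \textbf{I expect the parabolic case to be the main obstacle of part~(1).}

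For part~(2), take $x$ not in the orbit of any non-expandable point and suppose, for contradiction, that $\{g'(x)\mid g\in G\}$ is bounded. Minimality forces the orbit of $x$ to accumulate on some $p\in\NE(G)$, entering a one-sided contracting interval of $p$ infinitely often but, since $x\notin\text{orbit}(\NE)$, never landing on $p$ itself. Using the contracting element at $p$ (say $g_+$ with $(p,p+\eps)$ contracted toward $p$), the backward iterates $g_+^{-n}$ expand that interval without bound: in the hyperbolic case $(g_+^{-n})'\approx (1/g_+'(p))^n\to\infty$, and in the parabolic case one uses that $(g_+^{n})'\to 0$ along orbits converging to a $C^2$ parabolic fixed point, equivalently unbounded backward expansion. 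Composing such $g_+^{-n}$ with the group element carrying $x$ into $(p,p+\eps)$ produces elements whose derivative at $x$ tends to infinity, contradicting boundedness.

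For part~(3), I would run the exponential-expansion (Sullivan--Hurder) scheme. By~(1) and~(2), Lebesgue-almost every $x$ admits elements $g$ with $g'(x)$ arbitrarily large. The essential upgrade is to realize this large derivative by \emph{words whose associated interval-orbits have bounded total length}, so that the $C^2$ control-of-distortion lemma gives uniformly bounded distortion on a shrinking sequence of neighborhoods of $x$ that are mapped onto intervals of definite size; the expanding elements coming from the $\NE$-points via property~\smin\ are exactly what makes the expansion ``clean'' enough to arrange this. Granting this, ergodicity follows by the standard density-point argument: for a $G$-invariant set $A$ with $\Leb(A)>0$ and a density point $x$, pushing a small neighborhood forward by such $g$ shows that $A$ has density arbitrarily close to $1$ on intervals of definite length, whence by invariance and minimality $\Leb(A)=1$. \textbf{The principal difficulty here is arranging large derivative together with bounded distortion}, i.e. choosing good expanding words and controlling overlaps of the interval-orbit, which is the genuine technical core of the method and the place where the finiteness of $\NE(G)$ and property~\smin\ are indispensable.
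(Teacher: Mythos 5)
Your high-level architecture (finiteness of $\NE(G)$, then the derivative dichotomy, then expansion-with-bounded-distortion feeding the density-point argument) coincides with the paper's, but the proposal contains one outright error and leaves the two steps you yourself flag as ``main obstacles'' unproved --- and those are exactly where the paper's content lies. The error is in part~(2). For a \emph{fixed} entry point $z=h(x)\in(p,p+\eps)$, the derivatives $(g_+^{-n})'(z)$ do \emph{not} tend to infinity, in either the hyperbolic or the parabolic case: the backward orbit $g_+^{-n}(z)$ leaves every neighbourhood of $p$ and converges to the next fixed point $q$ of $g_+$, which is attracting for $g_+^{-1}$; since the intervals $g_+^{-n}\big([z,g_+^{-1}(z)]\big)$ are pairwise disjoint, the usual $C^2$ distortion estimate shows that $(g_+^{-n})'(z)$ is comparable, up to constants, to the length of $g_+^{-n}\big([z,g_+^{-1}(z)]\big)$, which tends to zero. (You are implicitly evaluating at $p$ itself, where indeed $(g_+^{-n})'(p)=g_+'(p)^{-n}$; but $x\notin G(\NE)$ means its orbit never lands on $p$.) Unbounded expansion is available only at entry points $z_k\to p$, and then the composed derivative $(g_+^{-n_k}\circ h_k)'(x)=(g_+^{-n_k})'(z_k)\,h_k'(x)$ proves nothing, because $h_k'(x)$ is uncontrolled and may be arbitrarily small. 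The paper's proof of~(2) is structurally different: using~(1), it covers the circle by finitely many sets on which either a single element has derivative bounded below by a constant $>1$ (away from $\NE$), or the first-exit maps $g_\pm^{n+k_0^\pm}$ have derivative at least $2$ (near each point of $\NE$); compactness yields a \emph{uniform} $\lambda>1$, and one simply iterates, the chain rule giving derivative at least $\lambda^n$ along any expansion orbit that never falls into $\NE(G)$ --- which is guaranteed precisely when $x\notin G(\NE)$. (Your argument also tacitly assumes $\NE\neq\emptyset$.)

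Concerning the two acknowledged gaps: in part~(1) no hyperbolic/parabolic case distinction is needed, and no contradiction argument about ``accumulation'' either. After replacing $g_+$ by its inverse, $y$ becomes a right topologically \emph{repelling} fixed point; taking $\bar y\in(y,y+\eps)$, the fundamental domain $J_0=[g_+^{-1}(\bar y),\bar y)$ and its preimages $J_k=g_+^{-k}(J_0)$ are pairwise disjoint, so the distortion of $g_+^{k}$ on $J_k$ is uniformly bounded and $(g_+^{k})'(x)\ge |J_0|/(C|J_k|)\ge 2$ for all large $k$: every point of $(y,\bar y_{k_0})$ is therefore expandable, whether $g_+'(y)=1$ or $g_+'(y)<1$ (this is Lemma~\ref{l:fixed-exp}, applied verbatim). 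In part~(3), what you ``grant'' is the technical core of the theorem. The paper establishes it not by bounding total lengths of interval-orbits, but by bounding the quotient $\sum_{j\le n}(f_j\circ\dots\circ f_1)'(x)\big/(f_n\circ\dots\circ f_1)'(x)$: first a one-step bound $C_1$ valid near $\NE$ (Lemma~\ref{l:one-step}, again via the fundamental-domain estimate), then a bound $C_2=C_1/(1-\lambda^{-1})$ for words of arbitrarily large derivative (Lemma~\ref{l:derivatives-quotient}), obtained by rewriting the quotient, via time reversal, as $\sum_j (f_{j+1}^{-1}\circ\dots\circ f_n^{-1})'(y)$ evaluated at the image point $y$ and summing a geometric series; finally Proposition~\ref{bound} converts this bound into distortion at most $\log 2$ on a neighbourhood of $x$ mapped onto an interval of the fixed length $\eps=\log 2/(2C_{\mcF}C_2)$ (Proposition~\ref{p:final}). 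Without this chain, or a substitute for it, your outline of parts~(1) and~(3) remains a plan rather than a proof.
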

The second conclusion of Theorem~\ref{description} allows deducing the following 
\begin{corollary}\label{cor:independence}
For finitely generated groups of $C^2$ circle diffeomorphisms,
property~\smin\ does not depend on the choice of the Riemannian metric on the circle.
\end{corollary}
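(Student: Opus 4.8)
The plan is to reduce everything to the characterization of the bounded-derivative locus furnished by the second conclusion of Theorem~\ref{description}. Since minimality is purely topological, it is insensitive to the Riemannian metric, so only the condition on non-expandable points requires attention. I fix a reference metric for which property~\smin\ holds and let $\rho$ denote the positive density of an arbitrary competing metric, so that $g$ has metric derivative $\frac{\rho(g(x))}{\rho(x)}\,g'(x)$ at $x$, where $g'$ denotes the reference derivative. First I would observe that the boundedness of the derivative set at a point is metric-independent: as $\rho$ is continuous and positive on the compact circle, it satisfies $0<m\le\rho\le M<\infty$, whence the ratio $\frac{\rho(g(x))}{\rho(x)}$ lies in $[m/M,\,M/m]$, and therefore $\{g'(x)\mid g\in G\}$ is unbounded if and only if the corresponding set of $\rho$-derivatives is unbounded. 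Let $B$ denote the resulting (metric-independent) set of points at which all derivatives stay bounded.

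Next I would identify $B$ with the orbit of the reference non-expandable set. On the one hand, conclusion~(2) of Theorem~\ref{description} yields $B\subseteq G\cdot\NE(G)$. On the other hand, if $x\in\NE(G)$ and $y=h(x)$ with $h\in G$, the chain rule gives $g'(y)=(gh)'(x)/h'(x)\le 1/h'(x)$ for every $g\in G$, because $(gh)'(x)\le 1$; thus the whole orbit of a non-expandable point lies in $B$, and consequently $B=G\cdot\NE(G)$. Since every $\rho$-non-expandable point has all $\rho$-derivatives at most $1$, hence bounded, we obtain $\NE_\rho(G)\subseteq B=G\cdot\NE(G)$: each non-expandable point for the new metric already lies in the orbit of a reference non-expandable point.

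Finally I would verify property~\smin\ for $\rho$ by conjugation. Given $y\in\NE_\rho(G)$, write $y=h(x)$ with $x\in\NE(G)$ and $h\in G$. Property~\smin\ for the reference metric provides $g_+,g_-$ fixing $x$ with no other fixed point in $(x,x+\eps)$, respectively $(x-\eps,x)$. As $h$ is an orientation-preserving homeomorphism, $hg_+h^{-1}$ and $hg_-h^{-1}$ fix $y$ and are fixed-point-free on a one-sided neighborhood of $y$ on the correct side, since a point near $y$ is fixed by $hg_\pm h^{-1}$ exactly when its $h^{-1}$-image is fixed by $g_\pm$. This establishes property~\smin\ for the metric $\rho$; as the reference metric was an arbitrary one satisfying \smin, the equivalence across all metrics follows.

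The main obstacle is the second step. A priori there is no reason for $\NE_\rho(G)$ and $\NE(G)$ to share their orbits, since rescaling the metric can push a point with $g'(x)$ slightly below $1$ into the expandable regime, or conversely; the genuine content is that Theorem~\ref{description}(2) pins the bounded-derivative locus down to the single metric-independent object $B=G\cdot\NE(G)$. Once that identification is in hand, the comparison of densities and the conjugation argument are routine.
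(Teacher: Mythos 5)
Your proposal is correct and follows essentially the same route as the paper: both reduce the question to the metric-independence of the bounded-derivative locus via Theorem~\ref{description}(2) (using that the density ratio of two metrics is bounded above and away from zero), and both then transport property~\smin\ along the orbit by conjugating $g_{\pm}$ with the element $h$ sending the reference non-expandable point to the new one. The only difference is cosmetic: you spell out, via the chain rule, the converse inclusion (orbits of non-expandable points have bounded derivative sets), which the paper treats as obvious inside the proof of Theorem~\ref{description}.
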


The assumption in Theorem A is well illustrated by two fundamental examples. The first one 
appears in~\cite{Ghys-Sergiescu}, where \'E.~Ghys and V.~Sergiescu showed that the
canonical (and actually unique up to semiconjugacy: see \cite{Liousse}) action of
Thompson group~$T$ on the circle is topologically conjugate to an action by $C^{\infty}$
diffeomorphisms.\footnote{They also constructed a $C^{\infty}$-action of~$T$ with a minimal invariant
Cantor set which is semi-conjugate to the standard one, obtaining as a corollary the rationality of
the rotation number for each element of~$T$. As a consequence, the ergodicity for the smooth
minimal actions of~$T$ on the circle cannot be deduced from Katok-Herman's result discussed
in Section~\ref{ss:minimality}.\label{f:G-S-rational}} For this (minimal) action (that we recall in Section~\ref{ss:Thompson}) 
we prove the following 

\begin{thmain}\label{Thompson}
The $\NE$-set for the minimal Ghys-Sergiescu's action of Thompson group 
T consists of a single point, which is an isolated fixed point of an element. (Therefore, this action
satisfies property~\smin.) On the other hand, the Lyapunov expansion exponent of the action is zero.
\end{thmain}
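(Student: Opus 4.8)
The plan is to work throughout with the explicit Ghys--Sergiescu smooth realization, which is topologically conjugate to the standard piecewise-linear action of $T$ on $S^1=\bbR/\bbZ$ (dyadic breakpoints, slopes in $2^{\bbZ}$), and to compute the $\NE$-set and the derivatives directly in that model. First I would single out the natural candidate for the unique non-expandable point, namely the distinguished point $p$ arising from the base dyadic point $0$, together with the smoothed standard generator $a\in T$ of the point-stabiliser (a copy of Thompson's group $F$): in the piecewise-linear picture $a$ fixes $0$ with one-sided slopes $1/2$ and $2$, and the smoothing turns this corner into a germ tangent to the identity at $p$.

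Granting for the moment that $\NE(T)=\{p\}$, conclusion~(2) is then easy. In the piecewise-linear model $a(x)<x$ on $(0,1)$, so $0$ is an isolated fixed point of $a$; this property is preserved under the conjugacy, so $p$ is an isolated fixed point of the smooth $a$. As $p$ has no other fixed point of $a$ nearby on either side, $a$ itself furnishes both elements $g_+,g_-$ required by property~\smin, which therefore holds as soon as conclusion~(1) is established.

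For conclusion~(1) I would argue in two steps. To prove $p\in\NE$ I must check $g'(p)\le 1$ for every $g\in T$, not merely for the generators: writing $g$ as a word and expanding $g'(p)$ by the chain rule gives a telescoping product of generator-derivatives along the orbit of $p$, and the point is to verify, from the explicit generators and the combinatorics of that orbit, that the tangency at $p$ together with the accompanying contraction keeps this product $\le 1$. To prove that no other point is non-expandable I would show $\{g'(x)\mid g\in T\}$ is unbounded for every $x\ne p$: for $x$ outside the orbit of $p$ the genuine piecewise-linear expansion (slopes $2^{k}$ with $k\ge 1$) survives the conjugacy, while for a point $q=h(p)\ne p$ in the orbit of $p$ the conjugate $h a h^{-1}$ is expanding on one side of $q$. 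This gives $\NE(T)=\{p\}$.

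The hard part will be conclusion~(3), the vanishing of the Lyapunov expansion exponent, which is precisely where Hurder's positive-exponent criterion (Theorem~\ref{thm:expansion}) fails to apply. Here I would aim for the quantitative estimate $\max_{\|g\|_{\mcF}\le n} g'(x)\le C\,n^{d}$, uniformly in $x$, for suitable constants $C,d$; dividing by $n$, taking logarithms and letting $n\to\infty$ then forces $\lambda_{\exp}(T;\mcF)=0$. The reason such a polynomial bound should hold is the parabolic nature of the action at $p$: all the expansion in the group has to be accumulated by passing near the tangent-to-identity point $p$, and for a germ tangent to the identity the derivatives along an orbit grow only polynomially in the number of steps, never exponentially. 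The delicate step is to promote this one-element parabolic estimate to the whole group, controlling how the large piecewise-linear slopes ($2^{k}$ with $k$ comparable to the word length) are compressed by the conjugacy near $p$, and thereby showing that producing a derivative of size $D$ requires word length at least of order $D^{1/d}$. Establishing this reduction is what would complete the proof.
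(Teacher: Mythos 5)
The decisive flaw is in your part (3). The uniform polynomial bound $\max_{\|g\|_{\mcF}\le n} g'(x)\le C\,n^{d}$ \emph{for all} $x$ is false, and with it the whole reduction ``derivative $D$ requires word length $\gtrsim D^{1/d}$''. In the smooth model $[T]_{\varphi}$ (a degree-two map $\varphi$ tangent to the identity at $0$, with $\varphi'>1$ elsewhere), for every $x\neq 0$ there is a group element coinciding with $\varphi$ on a neighborhood of $x$. Taking $x$ to be a repelling periodic point of $\varphi$ (for instance the period-two orbit $\{[1/3]_{\varphi},[2/3]_{\varphi}\}$ that the paper itself uses to build a fundamental domain), one can realize $\varphi^{2m}$ locally near $x$ by words of length $O(m)$, so $\max_{\|g\|_{\mcF}\le n} g'(x)$ grows \emph{exponentially} and $\lambda_{exp}(x)>0$ there. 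Your parabolic heuristic (``all expansion must be accumulated by passing near $p$'') is wrong: expansion is also accumulated at hyperbolic periodic points away from $p$, only on a Lebesgue-null set of base points. Hence the vanishing of the exponent is intrinsically an almost-everywhere statement, and the proof must be measure-theoretic. The paper's route is: after cancelling $\varphi\circ\varphi^{-1}=\id$, every word of length $n$ coincides near a given point with a branch of $\varphi^{-k}\circ\varphi^{l}$ with $l\le Ln$, so its derivative is at most $(\varphi^{Ln})'(x)$; and the Lyapunov exponent of the single map $\varphi$ is zero a.e.\ because, by a first-return-map argument with bounded distortion and non-integrable return time (Inoue's lemma), the time averages $\mu_{n,x}=\frac1n\sum_{i<n}\delta_{\varphi^{i}(x)}$ converge to $\delta_0$ for Lebesgue-a.e.\ $x$, whence $\frac1n\log(\varphi^{n})'(x)=\int\log\varphi'\,d\mu_{n,x}\to\log\varphi'(0)=0$. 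Nothing in your outline supplies this a.e.\ mechanism.

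There are also two gaps in your part (1). For a point $q=h(p)$ in the orbit of $p$, the chain rule gives $(h a h^{-1})'(q)=h'(p)\,a'(p)\,(h^{-1})'(q)=1$ because $a'(p)=1$; so ``$hah^{-1}$ is expanding on one side of $q$'' produces no element with derivative $>1$ at $q$ and does not show $q\notin\NE$. Likewise, piecewise-linear slopes do not ``survive the conjugacy'': the conjugacy between the PL action and the smooth action is only a homeomorphism (generally singular), so it transfers no derivative information whatsoever. Both points are repaired by the structural fact the paper uses: for every $x\neq 0$ (including points of the orbit of $0$) there is an element of $[T]_{\varphi}$ equal to $\varphi$ near $x$, giving derivative $\varphi'(x)>1$; while near $0$ every element coincides with $\varphi^{-k}\circ\varphi^{l}$, and since $\varphi^{l}(0)=0$ and $\varphi'(0)=1$, its derivative at $0$ is $(\varphi^{-k})'(0)\le 1$. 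This last computation is also what makes rigorous your ``telescoping product'' step, which as stated is only a plan, not an argument.
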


It was pointed out to us by  \'E.~Ghys that there exist smooth actions of the group~$T$ (still satisfying  property~\smin\ and with zero Lyapunov expansion exponent) having more than one non-expandable point: see Remark~\ref{etienne}.

The second example of a minimal group action with a non-empty set of non-expandable 
points is the already mentioned action of $\PSL_2(\bbZ)$. 
(Notice that, since this group is discrete inside $\PSL_2(\bbR)$, the rotation number of each of its elements is rational: compare footnote~\ref{f:G-S-rational}.) For this case we have the following

\begin{thmain}\label{PSL2Z}
The only non-expandable points of the canonical action of $\PSL_2(\bbZ)$ (in the 
standard affine chart) are $0$ and $\infty$. Both of them are isolated fixed points of certain elements 
(namely, $x\mapsto x/(x+1)$ and $x\mapsto x+1$, respectively), and therefore the action satisfies
property~\smin. On the other hand, its Lyapunov expansion exponent equals zero.
\end{thmain}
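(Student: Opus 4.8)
The plan is to treat the two assertions separately, since the determination of $\NE(\PSL_2(\bbZ))$ with property~\smin\ and the vanishing of the expansion exponent rest on different ideas. For the first I would work on $S^1=\bbP(\bbR^2)$ with its round (angular) metric and record the derivative of a projective map: representing a point $x$ by a unit vector $v$ and a class in $\PSL_2(\bbZ)$ by a matrix $A\in\mathrm{SL}_2(\bbZ)$, a direct computation using $\det A=1$ gives
$$A'(x)=\frac{1}{\|Av\|^{2}},\qquad\text{equivalently}\qquad A'(x)=\frac{1+x^{2}}{(ax+b)^{2}+(cx+d)^{2}}$$
in the affine coordinate, with $A=\left(\begin{smallmatrix}a&b\\ c&d\end{smallmatrix}\right)$. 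Thus $x$ is non-expandable exactly when $\|Av\|\ge 1$ for every $A\in\mathrm{SL}_2(\bbZ)$, so the $\NE$-set becomes a question about the minimal norm in the $\mathrm{SL}_2(\bbZ)$-orbit of $v$.

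If $x$ is rational, $v$ is proportional to a primitive integer vector $(p,q)$; since $\mathrm{SL}_2(\bbZ)$ acts transitively on primitive vectors and $(1,0)$ is primitive, the minimal norm in the orbit is $\|v\|/\sqrt{p^{2}+q^{2}}$, which for $\|v\|=1$ is $<1$ precisely when $p^{2}+q^{2}>1$, i.e.\ for every rational point except $0=(0:1)$ and $\infty=(1:0)$, where the minimum equals $1$. If $x$ is irrational, the slope of $v=(\alpha,\beta)$ is irrational, so $\alpha\bbZ+\beta\bbZ$ is dense; I would pick a primitive $(a,b)$ with $|a\alpha+b\beta|$ arbitrarily small, complete it to an element of $\mathrm{SL}_2(\bbZ)$, and use the freedom of adding integer multiples of $(a,b)$ to the second row to make the second coordinate of $Av$ equally small, producing elements with $\|Av\|<1$. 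Hence $\NE=\{0,\infty\}$. Property~\smin\ then follows from the remark after its definition, since $x\mapsto x+1$ fixes only $\infty$ and $x\mapsto x/(x+1)$ (the matrix $\left(\begin{smallmatrix}1&0\\1&1\end{smallmatrix}\right)$) fixes only $0$ on the circle, so each is an isolated fixed point of an element.

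For the vanishing of the expansion exponent the key is the dictionary between the $\PSL_2(\bbZ)$-action and continued fractions. Maximising $A'(x)$ over $\|A\|_{\mcF}\le n$ amounts to minimising $\|Av\|$, which is governed by how small $|cx+d|$ can be; by the best-approximation property of convergents the smallest such values are $\|q_{k}x\|\approx 1/q_{k+1}$, attained at the convergent denominators $q_{k}$ of $x=[a_{0};a_{1},a_{2},\dots]$. The decisive point—and the step I expect to be the main obstacle—is the two-sided comparison between the word length of the convergent matrix $\left(\begin{smallmatrix}p_{k}&p_{k-1}\\ q_{k}&q_{k-1}\end{smallmatrix}\right)$ and the partial quotients: because the parabolic generator $x\mapsto x+1$ must be applied $a_{i}$ times to build in the $i$-th partial quotient, one has $\|A_{k}\|_{\mcF}\asymp\sum_{i\le k}a_{i}$, whereas $\log q_{k}\le\sum_{i\le k}\log(a_{i}+1)$. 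Once this is established, an element of word length at most $n$ can expand $x$ by at most $e^{o(n)}$, since
$$\frac1n\max_{\|A\|_{\mcF}\le n}\log A'(x)\ \lesssim\ \frac{2\log q_{k(n)}}{\sum_{i\le k(n)}a_{i}}\ \le\ \frac{2\sum_{i\le k(n)}\log(a_{i}+1)}{\sum_{i\le k(n)}a_{i}},$$
where $k(n)$ is the largest index whose convergent matrix is affordable within word length $n$.

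To finish I would invoke the ergodic theory of the Gauss map: the partial quotient $a_{i}$ is non-integrable for the Gauss measure (its tail is $\sim 1/k^{2}$), so $\frac1k\sum_{i\le k}a_{i}\to\infty$ for Lebesgue-a.e.\ $x$, whereas $\log(a_{i}+1)$ is integrable, so $\frac1k\sum_{i\le k}\log(a_{i}+1)$ tends to a finite constant. The displayed ratio therefore tends to $0$ a.e., giving $\limsup_{n}\frac1n\max_{\|A\|_{\mcF}\le n}\log A'(x)\le 0$; since the identity already forces this quantity to be $\ge 0$, the expansion exponent is $0$ a.e., and by Theorem~\ref{thm:expansion} the a.e.-constant value $\lambda_{\exp}(\PSL_2(\bbZ))$ equals $0$. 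The care needed in the previous paragraph—making the comparison $\|A_{k}\|_{\mcF}\asymp\sum_{i\le k}a_{i}$ precise and handling the intermediate semiconvergent matrices whose word lengths interpolate between successive convergents—is where the real work lies.
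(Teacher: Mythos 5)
Your determination of $\NE(\PSL_2(\bbZ))=\{0,\infty\}$ and of property~\smin\ is correct, and it follows a genuinely different route from the paper: you work directly with the formula $A'(x)=1/\|Av\|^{2}$, using transitivity of $\mathrm{SL}_2(\bbZ)$ on primitive integer vectors for rational points and density of $\alpha\bbZ+\beta\bbZ$ for irrational ones, whereas the paper passes to the congruence subgroup $G_2=\ker\big(\PSL_2(\bbZ)\to \mathrm{SL}_2(\bbZ/2\bbZ)\big)$, free on the two parabolics $\left(\begin{smallmatrix}1&2\\0&1\end{smallmatrix}\right)$, $\left(\begin{smallmatrix}1&0\\2&1\end{smallmatrix}\right)$, and checks by a case analysis that at every generic point exactly one of the four generators decreases the norm. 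Your version of this half is clean and complete in outline.

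The zero-exponent half, however, has a genuine gap, and it sits precisely at the step you flag as ``where the real work lies''; flagging it does not fill it, and as written the displayed chain of inequalities does not follow from what precedes it. You need an upper bound on $\log A'(x)$ for an \emph{arbitrary} $A$ with $\|A\|_{\mcF}\le n$, but your two inputs do not give one: (i) the best-approximation property bounds $|cx+d|$ from below in terms of the \emph{size} of the entry $c$ (if $|c|<q_{K+1}$ then $|cx+d|\ge\|q_Kx\|$), not in terms of word length -- and since a word of length $n$ typically has entries of size $\lambda^{n}$, this only yields $\max_{\|A\|_{\mcF}\le n}\log A'(x)=O(n)$, which is vacuous; (ii) the estimate $\|A_k\|_{\mcF}\asymp\sum_{i\le k}a_i$ concerns only the convergent matrices themselves and says nothing about other words of length $\le n$. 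The missing statement is the converse lemma: if $A'(x)\ge Cq_m^{2}$, then $\|A\|_{\mcF}\ge c_0\sum_{i\le m}a_i$. This is provable -- unimodularity of $A$ together with Legendre's criterion forces one row of $A$ to be, up to sign and bounded error, a convergent pair $(q_m,-p_m)$ of $x$, and one then needs a word-length lower bound, via normal forms in $\bbZ_2*\bbZ_3$, valid for \emph{every} matrix containing such a row -- but it is exactly this lemma that carries the theorem, and it is absent. (By contrast, the semiconvergent issue you also mention is harmless: it adds at most $2\log n$ to the numerator, which dies after dividing by $n$.) The paper sidesteps all of this with its ``greedy algorithm'' lemma: at a generic point exactly one generator of $G_2$ expands, and the derivative-maximizing word of length $n$ is obtained by always applying it; this reduces the expansion exponent to the Lyapunov exponent of a single non-uniformly expanding interval map $\widetilde{\varphi}$ with neutral fixed points at $0$ and $1$, shown to vanish a.e.\ by a first-return-map and Birkhoff argument. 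Your Gauss-map facts (non-integrability of $a_1$, integrability of $\log(a_1+1)$) are the continued-fraction translation of the paper's non-integrable return time, so the two arguments describe the same phenomenon; but in your proposal the combinatorial bridge between word length and continued fractions is asserted rather than proved.
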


The fact that the action of $\PSL_2(\bbZ)$ is ergodic is well-known. Indeed, one
way to  show this consists in extending this action inside the Poincar\'e disc~$\bbD$.
If a measurable invariant set~$A$ existed, then the solution of the Dirichlet problem
with $\mathbf{1}_A$ as boundary value would be an invariant harmonic function. This function 
would then descend to the quotient $\mathbb{D}/\PSL_2(\bbZ)$, which is the modular 
surface. However, there exists no non-constant, bounded, harmonic function on the 
modular surface, which gives a contradiction.
Despite this very simple argument, it is
interesting to notice that the ergodicity 
can be also deduced from the property~\smin.

\subsection{Exceptional minimal sets}

Recall that for every group of circle homeomorphisms without finite orbits and whose action is not 
minimal, there exists a minimal closed invariant set which is homeomorphic to the Cantor set (and
which is commonly called an {\em exceptional minimal set}): see for instance \cite{Ghys-actions}. 
The following conjecture, stated by G.~Hector (as far as we know, in 1977-78), appears to be 
fundamental in this context:

\begin{conjecture}[G.~Hector] 
If a finitely generated group of $C^2$ circle
diffeomorphisms admits an exceptional minimal set $\Lambda$, 
then the Lebesgue measure of $\Lambda$ is zero. \label{conj2}
\end{conjecture}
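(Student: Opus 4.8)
The plan is to reduce the statement to a purely local question at the non-expandable points and to control everything else by the classical exponential-expansion argument. Write $S^1\setminus\Lambda=\bigsqcup_n I_n$ for the gaps; since $\Lambda$ is nowhere dense, every sufficiently long subinterval of $S^1$ contains gaps of definite relative size. First I would prove the key density estimate: at any point $x\in\Lambda$ for which the set of derivatives $\{g'(x)\mid g\in G\}$ is unbounded, the upper density of $\Lambda$ at $x$ is strictly less than $1$. Indeed, pick $g\in G$ with $g'(x)$ large; because $G\subset\Diff^2_+(S^1)$, the Koebe / cross-ratio distortion estimates underlying Sullivan's and Hurder's expansion strategy bound the distortion of $g$ on the interval $J\ni x$ that $g$ expands to a definite length. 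Hence the relative measure of $\Lambda$ inside $J$ is comparable to its relative measure inside the long interval $g(J)$, and the latter is bounded away from $1$ since $g(J)$ must contain gaps. Letting $g'(x)\to\infty$ forces $|J|\to0$, so $\Lambda$ has upper density strictly below $1$ at $x$. By the Lebesgue density theorem, the set of all such $x$ is a Lebesgue-null subset of $\Lambda$.

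This localizes the whole problem to the non-expandable set. Arguing as for the second conclusion of Theorem~\ref{description}, I would establish the dichotomy that every $x\in\Lambda$ either has unbounded derivatives---and is therefore negligible by the previous step---or lies in the $G$-orbit of a point of $\NE(G)\cap\Lambda$. It then remains to prove that this orbit meets $\Lambda$ in a Lebesgue-null set, equivalently that $\Lambda$ carries no Lebesgue mass accumulating at the non-expandable points. As orbits are countable, this follows as soon as $\NE(G)\cap\Lambda$ is itself countable and the density of $\Lambda$ still vanishes at each such point.

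To handle a single non-expandable point $x\in\NE(G)\cap\Lambda$ I would attempt a near-parabolic renormalization. On the expandable side one still finds elements moving nearby points monotonically away from $x$; organizing the dynamics into a tower of fundamental domains $\dots,J_{k+1},J_k,\dots$ shrinking to $x$, I would estimate the relative measures $|\Lambda\cap J_k|/|J_k|$ by a telescoping Denjoy--Koebe argument, using the $C^2$ control of distortion on each $J_k$ to keep these ratios bounded away from $1$, and hence to force vanishing density of $\Lambda$ at $x$. Combined with countability of $\NE(G)\cap\Lambda$, this would give $\Leb(\Lambda)=0$.

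The hard part---and the reason the statement is phrased as a conjecture rather than a theorem---is precisely this last local analysis carried out \emph{without} the hypothesis~$\Ls$. Property~$\Ls$ is exactly what provides, at each non-expandable point, a nontrivial element admitting that point as an isolated fixed point, which both forces $\NE(G)\cap\Lambda$ to be finite and supplies the parabolic germ organizing the tower above. In the fully unconditional setting no such element need exist: a non-expandable point may be fixed by no nontrivial element, the available expansion rates may degenerate to $1$ with arbitrarily flat (infinitely tangent) behavior, and the distortion bounds---uniform only away from fixed points---may blow up. In that regime the telescoping estimate need not converge and the density of $\Lambda$ may fail to drop, so controlling the measure of $\Lambda$ in shrinking neighborhoods of such degenerate non-expandable points is the genuine obstacle that this strategy, as it stands, does not overcome.
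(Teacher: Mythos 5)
First, a framing remark: the statement you were given is stated in the paper as an \emph{open conjecture} (Conjecture~\ref{conj2}, due to G.~Hector); the paper does not prove it, and proves it only under the additional hypothesis~\Ls\ (Theorem~\ref{Cantor}), whose proof is the transposition of that of Theorem~\ref{description}. Your proposal is, in outline, exactly that conditional argument --- finiteness of $\NE(G)\cap\Lambda$, the dichotomy between unbounded derivatives and the orbit of $\NE(G)$, and expansion with bounded distortion near density points --- and you are right, and commendably explicit, that the unconditional statement remains out of reach. So judged as a proof of the conjecture, the proposal is (by its own admission) incomplete; the useful question is whether you have drawn the line between the conditional and unconditional parts in the right place.

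You have not, and this is the genuine gap: the two steps you present as unconditional also require \Ls\ (or positive expansion exponent). (i) Your ``key density estimate'' asserts that $g'(x)$ large forces bounded distortion of $g$ on the interval $J\ni x$ that $g$ expands to definite size. But the only a priori bound for a single $C^2$ diffeomorphism is $\varkappa(g;J)\le C_g|J|$ with $C_g$ depending on the $C^2$ norm of $g$, which is unbounded over the group; a bound independent of $g$ requires writing $g=f_n\circ\dots\circ f_1$ in generators and controlling $\sum_i|f_i\circ\dots\circ f_1(J)|$, equivalently the sum of intermediate derivatives relative to the final one (Propositions~\ref{p:sum} and~\ref{bound}). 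Whether Lebesgue-almost every point admits such compositions without \smin/\Ls\ is precisely the ``$C$-distortion-expandable'' Question~\ref{q:C-expansion}, left open in the paper; the unconditional result of Hurder~\cite{Hurder} covers only the part of $\Lambda$ where $\lambda_{exp}(x)>0$, i.e.\ derivatives growing \emph{exponentially} in word length, not merely unboundedly. (ii) The dichotomy you invoke (the second conclusion of Theorems~\ref{description} and~\ref{Cantor}) is itself conditional: in the paper it is proved via a finite cover in which the neighborhoods $U_y$ of non-expandable points, with their uniform exit-expansion factor $\lambda>1$, are manufactured from the elements $g_\pm$ furnished by \smin/\Ls\ through Lemma~\ref{l:fixed-exp}; without such elements, an orbit approaching $\NE(G)$ may see the available expansion degenerate to $1$ and the derivatives stay bounded. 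So the obstruction is not confined to your final telescoping analysis at a single non-expandable point --- it already blocks both reduction steps. Under \Ls, everything you sketch is indeed carried out in the paper (Lemmas~\ref{l:fixed-exp}, \ref{l:one-step}, \ref{l:derivatives-quotient}, Proposition~\ref{p:final}), with one cosmetic difference: rather than showing the upper density of $\Lambda$ is strictly below $1$ at every expandable point, the paper expands a neighborhood of a Lebesgue density point of $\Lambda$ to macroscopic length $\varepsilon$ with distortion at most $\log 2$, producing an interval of definite size in which $\Lambda$ has relative measure arbitrarily close to $1$, contradicting the fact that $\Lambda$, being nowhere dense and closed, omits a definite proportion of every interval of length~$\varepsilon$.
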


In this work, we deal with this conjecture under a condition which is analogous to property~\smin:

\begin{definition}
Let $G$ be a finitely generated group of circle diffeomorphisms admitting an exceptional 
minimal set~$\Lambda$. We say that~$G$ satisfies 
\emph{property~\Ls}, if for every $x\in \Lambda\bigcap NE$ there exist $g_{-}, 
g_{+}$ in $G$ such that $g_+(x)=g_-(x)=x$ and $g_+$ (respectively, $g_-$)
has no other fixed point in some interval $(x,x+\eps)$ (respectively, $(x-\eps,x)$).\footnote{Actually, for the case where $x$ is isolated in $\Lambda$ from one side this condition may be weakened, only asking for an element having~$x$ as an isolated fixed point from the side where it is an accumulation point of $\Lambda$.}
\end{definition}

The theorem below is a natural analogue of our Theorem~\ref{description} for exceptional 
minimal sets. As their proofs are also completely analogous, we leave to the reader the task 
of adapting the arguments of the proof of Theorem~\ref{description} to this case. 

\begin{thmain}\label{Cantor}
Let $G$ be a finitely generated group of $C^2$ circle diffeomorphisms
having an exceptional minimal set $\Lambda$. If the action satisfies the property~\Ls, then:
\begin{enumerate} 
\item The set $\Lambda\cap \NE(G)$ is finite.
\item For each $x \in \Lambda$ not contained in the orbit of $\NE(G)$, 
the set of derivatives $\{g'(x) \!: x \in G\}$ is unbounded.
\item The Lebesgue measure of $\Lambda$ is zero.
\end{enumerate}
\end{thmain}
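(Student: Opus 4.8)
The plan is to mirror, mutatis mutandis, the strategy behind Theorem~\ref{description}, now localized to the Cantor set $\Lambda$ rather than the whole circle. The theorem has three parts, and I would treat them in the stated order, since each feeds the next.

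For the finiteness of $\Lambda \cap \NE(G)$ (part 1), the key observation is that property~\Ls{} attaches to every non-expandable point $x \in \Lambda$ a pair of elements $g_+, g_-$ fixing $x$ and having no other fixed point on one-sided intervals $(x, x+\eps)$ and $(x-\eps, x)$. First I would argue that distinct points of $\Lambda \cap \NE$ cannot accumulate. Suppose for contradiction that a sequence $x_n \in \Lambda \cap \NE$ converges to some $x_\infty$. By a compactness/$C^2$-distortion argument (the classical Kopell-type or Hölder control of the derivative cocycle coming from the $C^2$ hypothesis), the associated germs $g_+^{(n)}, g_-^{(n)}$ would force a contradiction with the one-sided no-fixed-point condition: the intervals on which $g_\pm$ act without fixed points would overlap in a way incompatible with $g'(x) \le 1$ at a non-expandable point. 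Concretely, I would show that near an accumulation point one could manufacture an element with derivative exceeding~$1$ at a point of $\NE$, which is absurd. The finiteness then follows since $\Lambda$, being compact, cannot contain an infinite discrete-in-itself subset of $\NE$ that fails to accumulate without being finite.

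For part 2, the unboundedness of $\{g'(x) : g \in G\}$ for $x \in \Lambda$ outside the orbit of $\NE(G)$, I would run the exponential expansion heuristic. If the derivatives stayed uniformly bounded at such an $x$, one could, using minimality of the action on $\Lambda$ together with the finiteness established in part~1, propagate this bound along the $G$-orbit and build a continuous family of $G$-invariant bounds on the derivative cocycle over $\Lambda$. The contradiction comes from the fact that $x$ is neither non-expandable nor in the orbit of a non-expandable point: there must exist $g$ with $g'(x) > 1$, and by iterating and composing—steering the point away from the finitely many bad germs using property~\Ls{} to cross them—one amplifies this expansion without bound. Here the role of property~\Ls{} is precisely to guarantee that the finitely many non-expandable points of $\Lambda$ do not block the expansion, because near each of them there is a one-sided contraction ($g_\pm$ with no nearby fixed point) that can be used to push orbits through.

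Part 3, the vanishing of $\Leb(\Lambda)$, is where I expect the main obstacle to lie, and it is the genuine analogue of the ergodicity conclusion. The strategy is a contraction/distortion argument in the spirit of the exponential expansion strategy from Section~\ref{s:background}: if $\Leb(\Lambda) > 0$, then $\Lambda$ has Lebesgue density points, and at such a point the expansion guaranteed by part~2 lets one find group elements that blow up a small neighborhood of the density point to macroscopic size while keeping bounded distortion (using the $C^2$, hence bounded-distortion, hypothesis). The expanded image would then have to be almost entirely covered by $\Lambda$ (by quasi-invariance of Lebesgue measure and invariance of $\Lambda$), forcing the complementary gaps of the Cantor set to occupy a vanishing proportion at all scales—contradicting that $\Lambda$ is a Cantor set with nonempty complement in every interval meeting it. The delicate point, and the hard part, is controlling distortion uniformly along the long words needed to realize the expansion \emph{while} routing orbits past the finitely many non-expandable points; property~\Ls{} is what makes this routing possible, but verifying that the distortion estimates survive the passage near those degenerate germs is exactly the technical crux. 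I would handle this by a stopping-time construction on words, cutting the expansion short before orbits linger too long near an $\NE$ point and using the one-sided no-fixed-point elements $g_\pm$ to traverse a definite distance past each such point in a bounded number of steps.
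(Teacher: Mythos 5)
Your overall architecture is the right one: the paper itself only says that Theorem~\ref{Cantor} is proved by adapting the proof of Theorem~\ref{description}, and your three-part plan (isolation of non-expandable points, a finite-cover expansion procedure, a density-point argument at the end) is that adaptation. But there is a genuine gap exactly at the place you call the crux: the distortion control for the passage near the points of $\Lambda\cap\NE$. Your proposed fix --- ``cutting the expansion short before orbits linger too long near an $\NE$ point'' and traversing ``a definite distance past each such point in a bounded number of steps'' --- cannot work as stated. A point $x$ close to $y\in\NE$ on the repelling side of $g_+$ needs an \emph{unbounded} number of applications of $g_+$ to leave a fixed neighborhood of $y$, and no short word can substitute: at $y$ every element of $G$ has derivative $\le 1$, so by continuity every word from a fixed finite set has derivative at most $1+o(1)$ near $y$. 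Lingering is unavoidable; the paper's solution is not to avoid it but to show that the whole escape block has uniformly bounded distortion however long it is. This is Lemma~\ref{l:fixed-exp} (used in Lemma~\ref{p:isolated} and Lemma~\ref{l:one-step}): the intermediate images $J_1,\dots,J_n$ of a fundamental domain under $g_+$ are pairwise disjoint, so $\sum_i |J_i|$ is bounded, whence $\varkappa(g_+^n;J_n)$ is bounded independently of $n$, the derivative $(g_+^n)'$ is comparable to $|J_0|/|J_n|$ (hence eventually $\ge 2$), and the sum $\sum_j (g_+^j)'$ is comparable to the final derivative. Without this disjointness/summability mechanism none of your parts closes: part 1 has no reason why points near an $\NE$ point are expandable, part 2 has no expansion factor on the neighborhoods of the $\NE$ points, and part 3 has no uniform bound on the distortion along the arbitrarily long words needed in the analogue of Proposition~\ref{p:final}.

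A second, smaller flaw is in part 1: you apply property~\Ls{} at the terms $x_n$ of the accumulating sequence and seek a contradiction from ``overlapping'' fixed-point-free intervals of the germs $g_{\pm}^{(n)}$, but no contradiction follows from such overlaps (the one-sided intervals may shrink with $n$, and distinct elements can perfectly well have overlapping fixed-point-free intervals). The correct move is to apply property~\Ls{} at the limit point $x_\infty$ --- legitimate because $\Lambda\cap\NE$ is closed, being the intersection of $\Lambda$ with $\bigcap_{g\in G}\{x : g'(x)\le 1\}$ --- and then use the expansion of the escape blocks of $g_+$ (again Lemma~\ref{l:fixed-exp}) to show that a punctured one-sided neighborhood of $x_\infty$ contains no non-expandable point at all; this is the paper's Lemma~\ref{p:isolated}, which needs no sequence argument whatsoever. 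With these two repairs, the rest of your plan (the finite subcover of the compact set $\Lambda$, the uniform $\lambda>1$, and the contradiction between density points of $\Lambda$ and the empty interior of $\Lambda$) goes through as you describe.
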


As in the case of minimal actions, the second conclusion of the theorem above implies the following

\begin{corollary}
For finitely generated groups of $C^2$ circle diffeomorphisms having an exceptional 
minimal set~$\Lambda$, property~\Ls{} does not depend on the choice of the Riemannian 
metric on the circle. 
\end{corollary}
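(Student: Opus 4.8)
The plan is to mirror the proof of Corollary~\ref{cor:independence}, using the second conclusion of Theorem~\ref{Cantor} as the sole substantial input. The only place where the Riemannian metric enters the definition of property~\Ls{} is through the set $\NE(G)$: the requirement that there exist elements $g_\pm$ fixing $x$ with prescribed isolated fixed points on one side is purely topological, hence conjugation- and metric-invariant. So the whole task reduces to comparing the sets $\Lambda\cap\NE(G)$ computed with respect to two different metrics.

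First I would record how derivatives transform under a change of metric. Writing a metric on $S^1$ as $\rho(x)\,dx$, with $\rho$ continuous and bounded between two positive constants, the derivative of an (orientation-preserving) $g\in G$ at $x$ measured in this metric is $\frac{\rho(g(x))}{\rho(x)}\,g'(x)$, where $g'(x)$ is the Euclidean derivative. Since the conformal factor $\rho(g(x))/\rho(x)$ ranges in a fixed compact subinterval of $(0,\infty)$ independent of $g$ and $x$, the set $\{g'(x)\mid g\in G\}$ is bounded for one metric if and only if it is bounded for any other. Thus unboundedness of the derivatives at a point is a metric-independent notion, even though membership in $\NE(G)$ itself is not.

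Then I would fix two metrics $m_1,m_2$, with corresponding non-expandable sets $\NE_1,\NE_2$, and assume property~\Ls{} holds for $m_1$. By Theorem~\ref{Cantor}, conclusion~(2) holds for $m_1$: every $x\in\Lambda$ at which the derivatives are bounded lies in the $G$-orbit of $\NE_1$. Taking any $x\in\Lambda\cap\NE_2$, I note that all its $m_2$-derivatives are $\le 1$, so $\{g'(x)\}$ is bounded; by the previous paragraph it is bounded in every metric, whence conclusion~(2) for $m_1$ places $x$ in the orbit of $\NE_1$, say $x=h(y)$ with $h\in G$. Since $G$ is a group and $\Lambda$ is $G$-invariant, $y=h^{-1}(x)\in\Lambda$, so in fact $y\in\Lambda\cap\NE_1$. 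Property~\Ls{} for $m_1$ then supplies $g_+,g_-$ fixing $y$ and having no other fixed point in $(y,y+\eps)$, respectively $(y-\eps,y)$; because $h$ preserves orientation, the conjugates $h g_+ h^{-1}$ and $h g_- h^{-1}$ fix $x=h(y)$ and have no other fixed point in the corresponding one-sided neighbourhoods of $x$. Hence $x$ satisfies the defining condition of property~\Ls{} for $m_2$, and as $x\in\Lambda\cap\NE_2$ was arbitrary, property~\Ls{} holds for $m_2$. Exchanging the roles of $m_1$ and $m_2$ gives the reverse implication, proving the stated independence.

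I expect no serious obstacle, the argument being entirely parallel to that of Corollary~\ref{cor:independence}. The one point demanding care is the bookkeeping of sides and orientations when transporting the isolated-fixed-point condition along $h$, together with the verification that the transported point $y$ genuinely lands in $\Lambda\cap\NE_1$ rather than merely in $\NE_1$, which is exactly where $G$-invariance of $\Lambda$ is used.
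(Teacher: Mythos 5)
Your proposal is correct and follows essentially the same route as the paper: the paper proves Corollary~\ref{cor:independence} by combining the conformal-factor boundedness argument with conclusion~(2) of Theorem~\ref{description} and conjugation of the maps $g_{\pm}$, and explicitly leaves the exceptional-minimal-set version as the identical adaptation using Theorem~\ref{Cantor}. Your extra observation that $y=h^{-1}(x)\in\Lambda\cap\NE_1$ by $G$-invariance of $\Lambda$ is exactly the small bookkeeping point the adaptation requires, and you handle it correctly.
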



\subsection{Conformal measures}

For conformal (in particular, for one-dimensional, smooth) maps, a fundamental property of the Lebesgue measure is that its infinitesimal change at a point is given by the derivative to the 
power of the dimension of the underlying space. This property was 
generalized by D. Sullivan (see~\cite{Sullivan}), who introduced the 
concept of \emph{conformal measure} as a powerful tool for studying the 
dynamics on exceptional minimal sets.

\begin{definition}
Let $G$ be a group of conformal transformations. A measure $\mu$ on the underlying 
space is said to be \emph{conformal with exponent~$\delta$} (or simply \emph{$\delta$-conformal}), if for every Borel set~$B$ and for every map $g\in G$ one has
$$
\mu (g(B)) = \int _B |g'(x)|^{\delta} d\mu(x),
$$ 
where $|g'|$ stands for the scalar part of the (conformal) derivative of~$g$.
Equivalently, for $\mu$-almost every point $x$, the Radon-Nikodym derivative of $g_*\mu$ w.r.t.~$\mu$ equals
$$
\frac{dg_* \mu}{d\mu}(x)= \frac{1}{|g'(g^{-1}(x))|^{\delta}}.
$$
\end{definition}

For the case of conformal maps, the conformal exponent of the Lebesgue measure equals
the dimension. Nevertheless, in presence of a proper closed invariant set, one can ask for 
the existence of a conformal measure (perhaps with a different exponent)  supported on it. 
For the case of a subgroup of~$\PSL_2(\bbC)$ acting conformally on the Riemann sphere 
with a proper minimal closed invariant set different from a finite orbit, such a measure 
was constructed by D.~Sullivan in~\cite{Sullivan1}. 

It is unclear whether for every finitely generated group of circle diffeomorphisms having 
an exceptional minimal set $\Lambda$, there exists a conformal measure supported 
on~$\Lambda$. This is the case for groups of real-analytic diffeomorphisms. Indeed, 
in this case the group acts discretely on the complement of $\Lambda$  (see~\cite{Hector}). 
Then, the arguments used by D.~Sullivan in~\cite{Sullivan1} apply to prove the existence of an 
exponent $0 < \delta \leq 1$ for which a $\delta$-conformal measure exists.

Conformal measures are expected to be ergodic as the Lebesgue measure 
is supposed to be in the minimal case. Although we are not able to settle this problem 
in its full generality here, we are able to deal with a weaker property, namely \emph{conservativity}:
\begin{definition}
An action of a group~$G$ on a measurable space $X$ that quasi-preserves a 
measure $\mu$ is said to be \emph{conservative}, if for every set~$A$ with $\mu(A)>0$, there 
exists an element $g\in G\setminus \{e\}$ such that $\mu(A\cap g(A))>0$.
\end{definition}
A non-conservative action is indeed automatically non-ergodic, since for each measurable subset~$B\subset A$ of intermediate measure $0<\mu(B)<\mu(A)$ the set $G(B)=\bigcup_{g\in G} g(B)$ is invariant and has intermediate measure $0<\mu(G(B))<1$.

The following result, due to D.~Sullivan, can be viewed as a partial evidence supporting 
Conjecture~\ref{conj}:

\begin{theorem} [D.~Sullivan~\cite{Sullivan}]\label{thm:conservativity} 
Let $G$ be a finitely generated group of $C^2$ circle diffeomorphisms. If
the action of $G$ is minimal, then it is conservative (with respect to the 
Lebesgue measure). 
\end{theorem}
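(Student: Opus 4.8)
The plan is to prove the contrapositive: assuming the action fails to be conservative, I will derive a contradiction with minimality. By the Hopf decomposition of a quasi-invariant action, non-conservativity is equivalent to the existence of a \emph{wandering set}, i.e.\ a measurable $W$ with $\Leb(W)>0$ whose translates $\{g(W)\}_{g\in G}$ are pairwise disjoint modulo null sets. Since these translates are disjoint subsets of $S^1$, I immediately obtain $\sum_{g\in G}\Leb(g(W))\le 1$, and because $\Leb(g(W))=\int_W g'(x)\,dx$ this says $\int_W\big(\sum_{g}g'(x)\big)\,dx\le 1$. In particular the Poincar\'e series $\sum_g g'(x)$ converges for Lebesgue-almost every $x\in W$, so that $g'(x)\to 0$ as $g$ ranges over $G$ and only finitely many translates can carry mass above any prescribed threshold. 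The whole difficulty is therefore to show that minimality is incompatible with such a summable ``contraction'' of the orbit on a set of positive measure.

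First I would fix a Lebesgue density point $x_0\in W$ at which the series converges, together with a short interval $I\ni x_0$ satisfying $\Leb(W\cap I)\ge(1-\eps)\Leb(I)$. The mechanism I would then exploit is a recurrence principle: by minimality the $G$-orbit of $I$ covers $S^1$, so the mass of $W\cap I$ is, under suitable group elements, transported back across a definite portion of the circle; iterating this should force $\sum_g\Leb(g(W))=\infty$, contradicting the bound above. To make ``transported with bounded loss'' precise I need \emph{bounded distortion}, namely a uniform control of $\sup_I g'/\inf_I g'$ by a constant $C$ for the relevant elements $g$: one then gets $\Leb\big(g(W)\cap g(I)\big)\ge \tfrac{1-\eps}{C}\,\Leb(g(I))$, and exhibiting infinitely many distinct such $g$ with $\Leb(g(I))$ bounded below produces the divergent sum. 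The hypotheses enter exactly here. The regularity $C^2$ (equivalently, $\log g'$ of bounded variation) furnishes a Denjoy--Schwartz type distortion estimate, bounding the distortion of a composition by the exponential of the total length of its intermediate images, so that distortion remains controlled along families whose intermediate images have summable lengths; and minimality, via the existence of elements with hyperbolic fixed points for a minimal $C^2$ action that does not preserve a probability measure (in the spirit of Sacksteder's theorem), supplies the contractions from which such families are manufactured.

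The genuinely delicate point, and the main obstacle, is precisely this distortion control, because of a built-in tension: contracting elements (iterates toward an attracting fixed point) enjoy bounded distortion but send $I$ to vanishingly small intervals, which is useless for the lower bound, whereas the expanding elements that spread $I$ across the circle -- the ones that would produce the divergent sum -- \emph{a priori} have uncontrolled distortion. Reconciling these is the heart of Sullivan's argument and requires coupling the contraction with minimality so that a definite proportion of $W$ is carried, with bounded distortion and at macroscopic size, infinitely often. Finally I would dispose of the complementary case in which $G$ \emph{does} preserve a Borel probability measure: then the action is topologically conjugate to a group of rotations and a finite invariant measure is available, so that a direct recurrence argument yields conservativity, completing the dichotomy.
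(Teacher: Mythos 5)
Your opening is exactly the paper's (the statement is proved there as the special case $\mu = \Leb$, $\delta = 1$ of Theorem~\ref{Conformal}): a wandering set $W$ of positive measure gives $\sum_{g\in G}\Leb(g(W))\le 1$, hence the Poincar\'e series $S(x)=\sum_{g\in G}g'(x)$ converges at Lebesgue-almost every $x\in W$. From that point on, however, your plan has a genuine gap, and in fact it heads toward something the very estimates you invoke rule out. You want infinitely many distinct $g\in G$ with $\Leb(g(I))$ bounded below and with bounded distortion on $I$, so as to force $\sum_g\Leb(g(W))=\infty$. Producing such elements is precisely the step you concede you cannot carry out (``the heart of Sullivan's argument''), and no such family exists: if $x_0\in W$ is a point where $S(x_0)<\infty$ and $I\ni x_0$ is a small enough interval centered at $x_0$, then writing any $g\in G$ as a geodesic word in the generators, the partial products are pairwise distinct group elements, so the sum of their derivatives at $x_0$ is at most $S(x_0)$; Proposition~\ref{bound} then gives $g'(x)\le 2\,g'(x_0)$ for all $x\in I$ and \emph{all} $g\in G$ simultaneously, whence $\sum_{g\in G}\Leb(g(I))\le 2|I|\,S(x_0)<\infty$ and $\Leb(g(I))\to 0$. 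Thus only finitely many translates of $I$ can have length bounded below, and the ``divergent sum'' strategy cannot be completed.

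The idea you are missing is that this same uniform estimate already finishes the proof, with minimality used in a purely topological way rather than as a source of expansion or recurrence: choosing $|I|<1/(2S(x_0))$ (and small enough for the distortion bound above), the set $B=\bigcup_{g\in G}g(I)$ is open, nonempty, $G$-invariant, and satisfies $\Leb(B)\le 2|I|\,S(x_0)<1$. Hence $S^1\setminus B$ is a nonempty closed invariant set, contradicting minimality. No expansion to macroscopic scale, no Sacksteder-type hyperbolic elements, and no case distinction are needed. Your final fallback (if $G$ preserves a probability measure, conjugate to rotations and apply Poincar\'e recurrence) is also flawed as stated: the invariant measure of a minimal $C^2$ action may be singular with respect to Lebesgue (this already happens for single minimal diffeomorphisms with Liouville rotation number), and recurrence with respect to that measure says nothing about conservativity with respect to Lebesgue. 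The unified argument above makes this case split unnecessary anyway.
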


By adapting D.~Sullivan's arguments, we obtain the following

\begin{thmain}\label{Conformal} 
Let $G$ be a finitely generated group of $C^2$ circle diffeomorphisms. 
Then any conformal measure of exponent $\delta\le 1$ without atomic part 
and which is supported on an infinite minimal invariant set is conservative.
\end{thmain}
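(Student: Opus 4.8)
The plan is to argue by contradiction, adapting the density-and-recurrence scheme behind Sullivan's Theorem~\ref{thm:conservativity}. Suppose the $\delta$-conformal measure $\mu$ (with $\delta\le 1$, non-atomic, supported on the infinite minimal set $\Lambda=\supp\mu$) is \emph{not} conservative. Negating the definition directly yields a set $A$ with $\mu(A)>0$ such that $\mu(A\cap g(A))=0$ for every $g\neq e$. Since $\mu$ is quasi-invariant, for $g\neq h$ one has $\mu(g(A)\cap h(A))=\mu\big(g(A\cap g^{-1}h(A))\big)=0$, so the translates $\{g(A)\}_{g\in G}$ are pairwise disjoint modulo $\mu$-null sets; in particular $A$ is a \emph{wandering} set. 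The aim is to contradict this disjointness by exhibiting two distinct translates that both occupy most of a fixed interval.

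First I would localize. As $\mu$ is a non-atomic Radon measure on the circle, the Lebesgue--Besicovitch differentiation theorem applies, so $\mu$-almost every point of $A$ is a density point. I would fix such a point $x_0\in A$ and, for a small radius, an interval $I\ni x_0$ with $\mu(A\cap I)\ge(1-\eta)\,\mu(I)$, where $\eta>0$ is to be chosen small; non-atomicity and minimality let $I$ be taken with $\mu(I)>0$ and with the relevant intervals having comparably controlled measure. The dynamical core is then to produce, using minimality together with the $C^2$ hypothesis, infinitely many \emph{distinct} elements $g_n\in G$ enjoying two properties: (i) the maps $g_n$ have \emph{uniformly bounded distortion} on $I$, i.e.\ $g_n'(y)/g_n'(z)\le K$ for all $y,z\in I$ with $K$ independent of $n$; and (ii) the images $g_n(I)$ all contain a fixed interval $I'$ with $\mu(I')>0$ and satisfy $\mu(g_n(I))\le C\,\mu(I')$ for a uniform $C$. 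Property~(i) is obtained from a Koebe / bounded-nonlinearity estimate for $C^2$ diffeomorphisms, valid once the $g_n(I)$ sit with definite margins inside slightly larger controlled images, while minimality supplies the \emph{recurrence} forcing the images to return to a common region $I'$ infinitely often. This is where the hypothesis $\delta\le 1$ enters: through the relation $\mu(g(I))=\int_I(g')^\delta\,d\mu$ and the bound $(g')^\delta\le g'$ on expanded regions, it lets the Euclidean (length) distortion control be converted into the $\mu$-comparability required in~(ii).

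Granting such a family, the conclusion is immediate. Conformality combined with~(i) gives, for every Borel $B\subset I$, the comparison $\mu(g_n(B))/\mu(g_n(I))\ge K^{-2\delta}\,\mu(B)/\mu(I)$; applying this to the complement $I\setminus A$ and using $\mu(A\cap I)\ge(1-\eta)\mu(I)$ together with~(ii) yields $\mu\big(g_n(A)\cap I'\big)\ge\big(1-CK^{2\delta}\eta\big)\,\mu(I')$. Choosing $\eta$ small enough that $1-CK^{2\delta}\eta>1/2$, each translate $g_n(A)$ fills more than half of $I'$ in $\mu$-measure, so for $m\neq n$ the sets $g_m(A)$ and $g_n(A)$ meet in positive $\mu$-measure inside $I'$, contradicting their disjointness. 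Hence $\mu$ is conservative.

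The main obstacle is precisely the construction in steps~(i)--(ii): because no exponential expansion is available and the non-expandable points obstruct the usual hyperbolic estimates, producing an infinite recurrent family of uniformly bounded distortion requires a delicate use of minimality and the $C^2$ control, exactly in the spirit of Sullivan's original argument. Everything else---the reduction to a wandering set, the density-point selection, and the final overlap---is routine once this family is in hand.
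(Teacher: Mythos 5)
Your overall frame (negate conservativity to get a wandering set $A$ whose translates $g(A)$ are pairwise $\mu$-disjoint, then derive a contradiction) matches the paper's starting point, and your final overlap computation is correct \emph{granting} your steps (i)--(ii). But those two steps are a genuine gap, not a deferred technicality. Theorem~\ref{Conformal} assumes neither property~\smin\ nor property~\Ls, so there is no mechanism in this generality for producing infinitely many distinct elements $g_n$ with uniformly bounded distortion on a fixed interval $I$ whose images recur to a common interval $I'$ of comparable measure. Bounded distortion for long compositions of $C^2$ maps requires (via Proposition~\ref{p:sum}) a bound on the sum of the lengths of the intermediate images, and minimality alone gives no such bound when non-expandable points are present; manufacturing such a recurrent bounded-distortion family is precisely the difficulty that this theorem is designed to avoid, not a step one can delegate to ``the spirit of Sullivan's argument.'' In fact Sullivan's conservativity argument is not the expansion-plus-recurrence scheme you describe (that is the ergodicity scheme of Section~\ref{ss:minimality}); it is the summability argument below.

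The idea you are missing is that the distortion control should be extracted from the wandering hypothesis itself, not from the dynamics. Since the translates $g(A)$, $g\in G$, are pairwise $\mu$-disjoint, conformality gives
$$
1 \,\ge\, \mu\Big(\bigcup_{g\in G} g(A)\Big) \,=\, \sum_{g\in G}\int_A \big(g'(x)\big)^{\delta}\,d\mu(x) \,=\, \int_A \Big(\sum_{g\in G} \big(g'(x)\big)^{\delta}\Big)\,d\mu(x),
$$
so $\sum_{g\in G} (g'(x))^{\delta}<\infty$ for $\mu$-a.e.\ $x\in A$; and since $\delta\le 1$ this forces $S(x):=\sum_{g\in G} g'(x)<\infty$ as well (only finitely many terms exceed $1$, and the remaining ones are dominated by their $\delta$-th powers). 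This is where $\delta\le 1$ actually enters, rather than in the conversion you describe. Fixing such a point $x_0\in\Lambda$ and an interval $I$ centered at $x_0$ of length less than $\log(2)/2C_{\mcF}S(x_0)$, Proposition~\ref{bound} yields $g'(x)\le 2\,g'(x_0)$ for \emph{every} $g\in G$ and every $x\in I$: uniform derivative control for the whole group at once, with no recurrence and no choice of special elements. Consequently $\mu\big(\bigcup_{g\in G}g(I)\big)\le 2^{\delta}\mu(I)\sum_{g\in G}(g'(x_0))^{\delta}$, which is strictly less than $1$ once $I$ is small enough (here non-atomicity of $\mu$ is used). The complement of the open invariant set $B=\bigcup_{g\in G} g(I)$ then has positive $\mu$-measure, hence meets $\Lambda$, while $x_0\in\Lambda\cap B$; thus $\Lambda\cap(S^1\setminus B)$ is a nonempty proper closed invariant subset of $\Lambda$, contradicting minimality. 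Note the contradiction is with minimality of the support, not with the disjointness of translates as in your plan.
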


The ergodicity (and uniqueness) of a conformal measure on a minimal set~$\Lambda$ 
was proved by D.~Sullivan when the group $G$ has the \emph{expansion property}, 
\emph{i.e.}~$G$ has no finite orbits, and $\NE(G)$ does not intersect~$\Lambda$:
\begin{theorem} [D.~Sullivan~\cite{Sullivan1}] 
Let $G$ be a group of $C^2$ circle diffeomorphisms acting with 
the expansion property. Then there is a unique conformal measure supported on the 
minimal set.
If the action is minimal, this is the Lebesgue measure. 
If there is an exceptional minimal set~$\Lambda$, this is the corresponding (normalized)
Hausdorff measure, which is then non-vanishing and finite. In the latter case, the exponent 
equals to the Hausdorff dimension of~$\Lambda$, which verifies $0<HD(\Lambda)<1$. 
\end{theorem}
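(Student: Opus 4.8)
The hypothesis to exploit is the expansion property: since $\NE(G)\cap\Lambda=\emptyset$, every $x\in\Lambda$ admits some $g\in G$ with $g'(x)>1$. The plan is first to upgrade this pointwise expansion to a \emph{uniform} one. By continuity of the derivatives and compactness of $\Lambda$, one extracts a finite set $\{h_1,\dots,h_m\}\subset G$ and constants $\lambda>1$, $r_0>0$ such that for every $x\in\Lambda$ some $h_i$ satisfies $h_i'>\lambda$ on the $r_0$-neighbourhood of $x$. Composing such elements along an orbit then produces, for each $x\in\Lambda$, elements $g$ with $g'(x)\ge\lambda^{n}$; in particular the derivatives $\{g'(x):g\in G\}$ are unbounded at every point of $\Lambda$. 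Alongside this I would record the \emph{bounded distortion} estimate, which is where the $C^2$ (equivalently $C^{1+\mathrm{Lip}}$) hypothesis enters: for the inverse branches contracting $\Lambda$ back to a fixed scale, the ratio $g'(x)/g'(y)$ stays within a fixed factor whenever $x,y$ lie in a commonly contracted interval. This gives $(\Lambda,G)$ the structure of a uniformly expanding conformal repeller and, by distortion, makes the critical exponent $\delta$ of the Poincar\'e series $\sum_{g\in G}|g'(x)|^{s}$ finite, positive, and independent of the basepoint $x\in\Lambda$.

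Next I would construct a $\delta$-conformal measure following Sullivan's Patterson-type recipe. Fix $x_0\in\Lambda$ and, for $s>\delta$, form the normalized atomic measures
\[
\mu_s=\Big(\sum_{g\in G}|g'(x_0)|^{s}\Big)^{-1}\sum_{g\in G}|g'(x_0)|^{s}\,\delta_{g(x_0)} .
\]
A weak-$*$ limit $\mu$ of the $\mu_{s_n}$ as $s_n\to\delta^{+}$ exists by compactness of the circle; its support is a nonempty closed $G$-invariant subset of $\Lambda$, hence equals $\Lambda$ by minimality, and the cocycle identity for $g\mapsto g'$ together with the divergence of the series at $\delta$ shows that $\mu$ is $\delta$-conformal (in the convergence-type case one inserts Patterson's slowly varying weight to force divergence). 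The unbounded expansion established above rules out atoms: an atom at $x$ would force atoms of mass $|g'(x)|^{\delta}\mu(\{x\})$ along the infinite orbit of $x$, contradicting finiteness of $\mu$.

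The heart of the argument, and the step I expect to be the main obstacle, is the \emph{Ahlfors regularity} estimate $\mu\big(B(x,r)\big)\asymp r^{\delta}$ for $x\in\Lambda$ and $0<r\le r_0$. Using uniform expansion I would select, for each scale $r$, an element $g$ mapping the interval of radius $r$ about $x$ to one of a fixed definite size; bounded distortion then transports the comparison $\mu(g(B))\asymp 1$ back through the relation $\mu(g(B))=\int_B|g'|^{\delta}\,d\mu$ and the near-constancy of $|g'|^{\delta}$ on $B$. This two-sided estimate yields simultaneously that $HD(\Lambda)=\delta$, that the normalized $\delta$-dimensional Hausdorff measure $\mathcal{H}^{\delta}|_{\Lambda}$ is finite, positive and comparable to $\mu$, and the bounds on the exponent: positivity of the expansion gives $\delta>0$, while in the exceptional case $\Lambda\neq S^1$ has empty interior, forcing $\delta<1$ (a $1$-regular measure would charge intervals like $\Leb$, giving $\Lambda$ nonempty interior). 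In the minimal case $\Lambda=S^1$ the same regularity forces $\delta=1$, and since the change-of-variables formula exhibits $\Leb$ itself as a $1$-conformal measure supported on $S^1$, the constructed measure is a multiple of $\Leb$.

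Finally, uniqueness follows from the regularity estimate applied to an arbitrary conformal measure. Any conformal measure $\nu$ supported on $\Lambda$ is non-atomic and satisfies $\nu(B(x,r))\asymp r^{\delta'}$ for its exponent $\delta'$; comparing with the Hausdorff scaling forces $\delta'=HD(\Lambda)=\delta$, so the exponent is unique, and then $\nu\asymp\mathcal{H}^{\delta}|_{\Lambda}\asymp\mu$, i.e.\ $\nu$ and $\mu$ are mutually absolutely continuous. Since the uniformly expanding action is ergodic with respect to any conformal measure, the $G$-invariant Radon--Nikodym derivative $d\nu/d\mu$ is constant a.e., whence $\nu=\mu$ after normalization. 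The crux throughout is the regularity estimate: it pins the exponent to the Hausdorff dimension, identifies $\mu$ with $\mathcal{H}^{\delta}|_{\Lambda}$ (or with $\Leb$ when $\Lambda=S^1$), and delivers uniqueness in one stroke, and establishing it rigorously rests on the interplay between the uniform expansion and the $C^2$ bounded-distortion control.
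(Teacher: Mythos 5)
Your second half---the Ahlfors-regularity estimate $\mu(B(x,r))\asymp r^{\delta}$ obtained by expanding $B(x,r)$ to a definite scale with bounded distortion, the pinning of the exponent to $HD(\Lambda)$, and uniqueness via the density-ratio comparison plus ergodicity---is sound, and it is essentially the machinery the paper itself deploys for Theorem~\ref{ConformalStar}: Proposition~\ref{p:final} produces neighbourhoods expanded to a fixed length $\varepsilon$ with distortion coefficient at most $\log 2$, yielding the two-sided estimate~\eqref{eq:conformal-estimate}, the exclusion of competing exponents via Proposition~\ref{p:measure}, and ergodicity by expanding density points of an invariant set. (One slip there: for $\delta=1$ in the exceptional case, $1$-regularity of $\mu\asymp\mathcal{H}^1|_{\Lambda}$ gives $\Leb(\Lambda)>0$, not nonempty interior; you then need the zero-Lebesgue-measure theorem for minimal sets with the expansion property, as in \cite{Navas-ens} or Theorem~\ref{Cantor}, to conclude. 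This is fixable.)

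The genuine gap is the existence step, and it is fatal as written. The Poincar\'e series you normalize by, $\sum_{g\in G}|g'(x_0)|^{s}$ with $x_0\in\Lambda$, diverges for \emph{every} $s>0$---by your own first step: uniform expansion produces, for each $n$, an element $g$ with $g'(x_0)\ge\lambda^{n}\ge 1$, so the series contains infinitely many terms bounded below by $1$. Since $G$ is a group rather than a semigroup of contractions, passing to inverses does not help (terms with $g'(x_0)\to 0$ blow up for $s<0$, and $s=0$ counts $|G|=\infty$), so there is no finite critical exponent, the measures $\mu_s$ are undefined, and Patterson's slowly varying weight---a device for forcing divergence \emph{at} the critical exponent---cannot repair a series that never converges. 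Sullivan's actual construction is dual to yours: one sums \emph{contraction} weights where the action is discrete, namely on the complement of $\Lambda$ (this is exactly the point the paper makes in its discussion of conformal measures, citing \cite{Hector}): for instance $\sum_{g}|g(I_0)|^{s}$ over the images of a fundamental gap $I_0$, or equivalently derivatives of inverse branches at a fixed scale, where bounded distortion gives $|g(I_0)|\asymp (g^{-1})'$ evaluated on $g(I_0)$; such series do converge for large $s$ and define a genuine critical exponent $\delta$. In the minimal case no construction is needed at all, since Lebesgue measure is itself $1$-conformal and only uniqueness requires proof. So your uniqueness argument parallels the paper's, but the existence half of the proposal fails and must be replaced by a construction based on contraction rates on the gaps rather than expansion rates on $\Lambda$.
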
 

It is unclear whether in the general case the Hausdorff measure on the minimal set  
is finite and nonzero. (If this is the case, it would be a conformal measure.)
However, using the control of distortion technique for the expansion,
one can obtain the following uniqueness result for the conformal measure:

\begin{thmain}\label{ConformalStar}
Let $G$ be a finitely generated group of $C^2$ circle  
diffeomorphisms. 
\begin{enumerate}
\item If~$G$ acts minimally and satisfies property \smin, then 
the Lebesgue measure is the unique conformal measure which 
does not charge the orbit $G(\NE)$ of the 
set~$\NE$ of non-expandable points. Moreover, all the other (atomic) conformal measures 
(if any) are supported on $G(\NE)$, and their conformal exponents are 
strictly greater than~$1$.
\item If the action of $G$ carries an exceptional minimal set $\Lambda$ for which  
the property~\Ls\ holds, then there exists at most one conformal measure supported 
on~$\Lambda$ and not charging the $G$-orbit of $\Lambda\bigcap \NE$. If such a 
measure exists, then its exponent belongs to the interval~$(0,1)$.
\item In particular, in both the minimal and the exceptional minimal cases, the non-atomic 
conformal measures supported on the minimal invariant set (the whole circle 
or~$\Lambda$, respectively) are ergodic.
\end{enumerate}
\end{thmain}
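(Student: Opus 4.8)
The plan is to reduce all three conclusions to two facts about a non-atomic conformal measure $\mu$ supported on the minimal invariant set $M$ (the circle in case~(1), the Cantor set~$\Lambda$ in case~(2)): that its exponent is forced into the announced range, and that it is ergodic. Uniqueness then follows from a soft argument. First I would record that the atomic and the continuous parts of a $\delta$-conformal measure are \emph{each} separately $\delta$-conformal, since the defining identity $dg_*\mu/d\mu(x)=1/|g'(g^{-1}x)|^{\delta}$ is local and respects this decomposition. For the atomic part, if $p$ is an atom with $p\notin G(\NE)$ then conclusion~(2) of Theorem~\ref{description} (resp.~\ref{Cantor}) makes $\{g'(p)\mid g\in G\}$ unbounded; since $\mu(\{g(p)\})=|g'(p)|^{\delta}\mu(\{p\})$ along the infinite orbit of~$p$, finiteness of $\mu$ forces $\delta>0$ and then a contradiction, so every atom lies on $G(\NE)$. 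Moreover the orbit masses are governed by the Poincar\'e series $\sum_{g}|g'(p)|^{\delta}$, which must converge whereas its exponent-one counterpart diverges (the exponent-one conformal measure being the non-atomic Lebesgue measure); hence the atomic exponent exceeds~$1$. This isolates the atomic conformal measures on $G(\NE)$ and reduces the rest to the continuous case; note that a non-atomic measure automatically gives the countable set $G(\NE)$ zero measure, so in conclusion~(3) the hypothesis of not charging $G(\NE)$ is free.

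Next I would pin down the exponent of a non-atomic conformal measure $\mu$ by the control-of-distortion technique underlying Theorems~\ref{description} and~\ref{Cantor}. Since $\mu$ does not charge $G(\NE)$, at $\mu$-almost every $x$ the derivatives are unbounded, so one can manufacture elements $g_n$ and intervals $I_n\ni x$ that $g_n$ carries with uniformly bounded distortion onto intervals of definite size. Conformality turns this into the comparison $\mu(I_n)\asymp|I_n|^{\delta}$ with constants depending only on the distortion bound, because $\mu(g_n(S))=\int_S|g_n'|^{\delta}\,d\mu$ and $|g_n'|^{\delta}$ has bounded oscillation on $I_n$. Summing this over a covering of $M$ by such good intervals, and using that $\mu(M)=1$ while the total length equals~$1$ in case~(1) but tends to~$0$ in case~(2) (since $\Lambda$ has zero Lebesgue measure by Theorem~\ref{Cantor}), forces $\delta=1$ in the minimal case and $\delta\in(0,1)$ in the exceptional case. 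In particular $\delta\le1$, so Theorem~\ref{Conformal} applies and $\mu$ is conservative.

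With conservativity in hand I would prove ergodicity, conclusion~(3), by transplanting the density-point argument that yields Lebesgue-ergodicity in Theorems~\ref{description} and~\ref{Cantor}. Given an invariant set $A$ with $0<\mu(A)<1$, conservativity supplies recurrence, and the same bounded-distortion elements $g_n$ push a neighborhood of a $\mu$-density point of $A$ onto an interval of definite size while distorting $\mu$-ratios only boundedly (again because $|g_n'|^{\delta}$ has bounded oscillation). Thus $A$ acquires $\mu$-density arbitrarily close to~$1$ on a definite portion of $M$, and minimality spreads this everywhere, forcing $\mu(A)=1$, a contradiction. Finally, uniqueness within a fixed exponent is soft: if $\mu_1,\mu_2$ are $\delta$-conformal then so is $\nu=\tfrac12(\mu_1+\mu_2)$, and $\phi=d\mu_1/d\nu$ satisfies $\phi\circ g=\phi$ $\nu$-a.e.\ because $\mu_1$ and $\nu$ carry the same Radon--Nikodym cocycle; ergodicity of $\nu$ makes $\phi$ constant, whence $\mu_1=\mu_2$. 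Combined with the exponent rigidity of the previous paragraph this gives the full uniqueness statements, and in the minimal case the surviving measure is the Lebesgue measure, which is indeed non-atomic, $1$-conformal, and does not charge $G(\NE)$.

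The main obstacle is the construction and bookkeeping of the family of good intervals: one must produce, at $\mu$-almost every point, group elements realizing definite expansion with uniformly bounded distortion, and organize them into coverings efficient enough both to pin the exponent and to run the density argument. The difficulty is concentrated near the non-expandable points, where expansion appears only in the limit; controlling distortion there is exactly where property~\smin\ (resp.~\Ls) and the $C^2$ hypothesis enter, precisely as in the proofs of Theorems~\ref{description} and~\ref{Cantor}, which I would invoke rather than reprove.
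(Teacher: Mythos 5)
Your main line is correct and, at its core, coincides with the paper's: atoms of a conformal measure can only sit where the derivative set $\{g'(x)\mid g\in G\}$ is bounded, hence (by conclusion~2 of Theorem~\ref{description}, resp.\ Theorem~\ref{Cantor}) on $G(\NE)$, so a measure not charging $G(\NE)$ is non-atomic; the bounded-distortion expansion of Proposition~\ref{p:final}, together with conformality and the fact that intervals of the fixed length $\varepsilon$ have $\mu$-measure bounded below (this uses that $\supp\mu$ is the whole minimal set --- your ``constants depending only on the distortion bound'' silently needs it), yields $c|U_k|^{\delta}\le \mu(U_k)\le C|U_k|^{\delta}$; this pins $\delta=1$ in the minimal case and $\delta\in(0,1)$ in the exceptional one; ergodicity comes from expanding neighborhoods of $\mu$-density points; uniqueness is then soft. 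You globalize the local estimate differently: the paper plays it against the almost-everywhere existence, finiteness and positivity of the density ratios $\mu(U_{\varepsilon}(x))/\Leb(U_{\varepsilon}(x))$ (its Proposition~\ref{p:measure}), while you sum it over a fine cover; this works on the circle (one needs multiplicity-two or Besicovitch-type subcovers, since the good intervals are not centered), and it makes the exponent rigidity somewhat more self-contained. Likewise your uniqueness device --- pass to $\nu=\frac{1}{2}(\mu_1+\mu_2)$, note that $d\mu_1/d\nu$ is invariant, and use ergodicity of $\nu$ --- is cleaner than the paper's (which first derives equivalence of $\mu_1,\mu_2$ from two-sided estimates and then invokes ergodicity of $\mu_1$), and it identifies the minimal-case measure as Lebesgue without the paper's separate absolute-continuity step. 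Your appeal to conservativity (Theorem~\ref{Conformal}) is superfluous: the density-point expansion plus minimality already forces $\mu(A)=1$.

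The genuine gap is in the ``Moreover'' clause: that atomic conformal measures live on $G(\NE)$ \emph{with exponent strictly greater than $1$}. First, your Poincar\'e series must be indexed by the distinct orbit points (cosets of the stabilizer), not by $g\in G$: under property~\smin\ the stabilizer of $p\in\NE$ is infinite and all its elements have derivative $1$ at $p$ (this is forced by the existence of an atom), so $\sum_{g\in G}g'(p)^{\delta}$ is always infinite; the series carrying the orbit masses is $\sum_{z\in G(p)}d(z)^{\delta}$ with $d(h(p))=h'(p)$. Second, and more seriously, your justification of the divergence of this series at exponent $1$ is circular: you invoke ``the exponent-one conformal measure being the non-atomic Lebesgue measure'', but the uniqueness you (and the paper) prove concerns only measures which do \emph{not} charge $G(\NE)$. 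If $\sum_{z}d(z)=S$ were finite, then $\frac{1}{S}\sum_{z\in G(p)}d(z)\,\Dirac_z$ would be a perfectly good atomic $1$-conformal measure \emph{charging} $G(\NE)$, contradicting nothing that has been established. So the divergence requires an independent argument, and it is not routine: the Sullivan-type estimate $|h(I)|\asymp h'(p)\,|I|$, uniform over $h\in G$, fails at $p\in\NE$ precisely because of the infinite stabilizer (the maps $g_{\pm}^{n}$ have derivative $1$ at $p$ but unbounded distortion on every neighborhood of $p$). Note that without this fact one also cannot exclude $1$-conformal measures of the form $\alpha\Leb+(1-\alpha)(\text{atoms on }G(\NE))$, so both halves of the ``Moreover'' statement rest on it. To be fair, the paper's own proof is silent on this clause --- it only treats measures not charging $G(\NE)$ and the exceptional-case uniqueness and ergodicity --- so your proposal matches the paper everywhere the paper actually gives an argument; but the step you add to cover the remaining claim does not close it.
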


As a final remark let us point out that, quite surprisingly, there exist examples of 
conformal measures on the circle whose conformal exponent exceeds one (that is, 
the dimension of the circle). These examples illustrate the restrictions 
imposed in Theorem~\ref{ConformalStar}:

\begin{example}\label{ex:GS}
There exists a Ghys-Sergiescu's non-minimal $C^2$ action on the circle 
of the Thompson group~$T$ such that, for every $\delta\ge 1$, there exists a
conformal measure of exponent~$\delta$ concentrated on the endpoints 
of the complementary intervals of the minimal set. 
\end{example}
\begin{example}\label{ex:PSLmin}
For the (minimal) standard $\PSL_2(\bbZ)$-action, for every exponent $\delta>1$ 
there exists a $\delta$-conformal measure concentrated on the orbit of the 
non-expandable point~$(0:1)$.
\end{example}

\subsection{Stationary measures}

Another concept related to the study of group actions is that of random dynamics. Namely, if 
in addition to an action of a group~$G$ on a compact space~$X$ is given a measure $m$ 
on $G$, then one can consider the left random walk on~$G$ generated by $m$, and the 
corresponding random process on~$X$. In other words,
one deals with \emph{random} sequences of compositions
$$
id, \, g_1, \, g_2 \circ g_1,\dots, \,g_n \circ g_{n-1} \circ \dots \circ g_1,\, \dots 
$$ 
where all the $g_j\in G$ are chosen independently and are distributed 
with respect to~$m$. The images $x_k=g_k\cdot\dots\cdot g_1 (x_0)$ of 
a given initial point $x_0\in X$ can be then considered as its ``random iterates'', 
as one has $x_{k+1}=g_{k+1}(x_k)$. Associated to this concept there is the following 
\begin{definition}
A measure~$\nu$ on $X$ is \emph{stationary with respect to $m$} if it coincides with the average of its images, that is, for every Borel $B\subset X$,
$$
\nu(B)=\int_G (g_*\nu)(B) \, dm(g).
$$
\end{definition}

This widely studied notion is in some sense analogous to that of an invariant measure for 
single maps. For instance, the existence of a stationary measure may be deduced by the 
classical Krylov-Bogolubov procedure of time averaging; moreover, an analogue of 
the Birkhoff Ergodic Theorem holds\dots
We recall more details on this in Section~\ref{s:background}. 

If for an action on the circle the first $\Diff^1$-moment\footnote{Formally 
speaking, it would be more exact to call it the 
first $\log$-$\Diff^1$-moment, as we are taking the logarithm of 
the $\Diff^1$-norm, in order to deal with a composition-subadditive
number. However, we prefer shortening the terminology.} 
$$
\int_G \log^+ \|g\|_{\Diff^1(S^1)} \, dm(g)
$$
of the measure~$m$ is finite, one can also define 
a \emph{random Lyapunov exponent} with respect to an ergodic stationary measure. 
If the support $\supp(m)$ does not preserve any 
measure on the circle, a result of 
P.~Baxendale~\cite{Baxendale} ensures the existence of an ergodic stationary 
measure whose random Lyapunov exponent is negative (which corresponds to 
some kind of a local contraction by random compositions).

Using the negativity of the random Lyapunov exponent, and somehow 
reversing the time of the dynamics, we obtain the following result relating 
the stationary measures to the Lyapunov expansion exponent of individual points. 

\begin{thmain}\label{thm:positive}
Let $G$ be a group of $C^1$ circle diffeomorphisms, and let $m$ be a 
measure on~$G$ having finite first word-moment. Assume that there is no 
measure on the circle which is invariant by all the elements in~$\supp(m)$. Then there exists a $m$-stationary measure~$\nu$ such that the Lyapunov expansion exponent is positive at $\nu$-almost every point.
\end{thmain}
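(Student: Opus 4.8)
The plan is to exploit the result of Baxendale, which under the stated hypothesis (no common invariant measure, finite first moment) produces an ergodic stationary measure $\nu_-$ whose random Lyapunov exponent is \emph{negative}. The strategy is to interpret this negative exponent, which governs the contraction of \emph{forward} random compositions $g_n\circ\dots\circ g_1$, as a statement about the \emph{expansion} of the derivatives of individual group elements at individual points, after suitably reversing the time of the random walk.

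\medskip

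First I would set up the forward random walk driven by $m$ and invoke Baxendale's theorem to obtain an ergodic $m$-stationary measure $\nu_-$ with negative random Lyapunov exponent
$$
\lambda := \int_G \int_{S^1} \log\big(g'(x)\big)\, d\nu_-(x)\, dm(g) < 0.
$$
The sign of $\lambda$ means that, along a typical trajectory $(g_n)$ of the random walk and for $\nu_-$-typical $x$, the forward products contract: $\frac{1}{n}\log\big((g_n\circ\dots\circ g_1)'(x)\big)\to\lambda<0$, by the subadditive ergodic theorem applied on the skew-product space $G^{\bbN}\times S^1$ equipped with the Bernoulli measure $m^{\otimes\bbN}\times\nu_-$. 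The point of contraction is that, by the chain rule, the derivative of the forward composition at $x$ is a product of derivatives evaluated at the \emph{successive images} $x_0,x_1,\dots$ of $x$, so negative average contraction forces, for a positive proportion of the intermediate points $x_k$, some long backward word to have large derivative there.

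\medskip

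The main step is then to turn this contraction statement into a positivity statement for the Lyapunov \emph{expansion} exponent at a point. I would consider the image measure of $\nu_-$ under the reversed dynamics, or equivalently produce a second stationary measure $\nu$ adapted to the time-reversed (inverse) random walk, and argue that along the reversed trajectory the products $(g_1^{-1}\circ\dots\circ g_n^{-1})$ expand with rate $-\lambda>0$. Writing the inverse composition's derivative via the chain rule and using that $\big((g_n\circ\dots\circ g_1)^{-1}\big)'(y)=1/\,(g_n\circ\dots\circ g_1)'\big((g_n\circ\dots\circ g_1)^{-1}(y)\big)$, the negativity of $\lambda$ gives an exponential growth rate of $+|\lambda|$ for these inverse words at the appropriate base point. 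Since the inverse elements $g_j^{-1}$ lie in $G$ and, up to a bounded factor coming from the word-length comparison between a word and its inverse, the word-length of $(g_n\circ\dots\circ g_1)^{-1}$ is comparable to $n$ times a constant, this exponential growth of a specific derivative $(\,\text{word of length}\lesssim n\,)'(y)$ yields, by the very definition of $\lambda_{\exp}$ as a $\max$ over $\|g\|_{\mcF}\le n$, the bound $\lambda_{\exp}(y;\mcF)\ge c|\lambda|>0$ for $\nu$-almost every $y$. I would take $\nu$ to be this stationary measure for the reversed walk as the measure asserted in the statement.

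\medskip

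The hard part will be the bookkeeping in the time-reversal: one must carefully relate the contraction of forward compositions under $\nu_-$ to the expansion of individual backward derivatives under the correct stationary measure, making sure that the word whose derivative is large is evaluated at the \emph{base} point of the relevant measure and not at a moving image point, and that its word-length grows only linearly in $n$ (so that dividing $\log$ of the derivative by $n$ recovers a positive limsup rather than diluting it to zero). Controlling the passage from the integrated exponent $\lambda<0$ to a pointwise (almost everywhere) statement is where Kingman's subadditive ergodic theorem together with ergodicity of $\nu_-$ must be used to upgrade the average to an almost-sure rate; the finiteness of the first word-moment of $m$ guarantees the integrability hypothesis needed for Kingman's theorem to apply. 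Once these identifications are made, the definition of $\lambda_{\exp}$ as a limsup over a maximum immediately absorbs the pointwise rate, giving the desired positivity.
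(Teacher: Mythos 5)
Your first half follows the paper's strategy exactly: invoke Baxendale's theorem to get an ergodic $m$-stationary measure $\nu_-$ with negative random Lyapunov exponent, invert the contracting compositions to produce expanding words at the image points, and use finiteness of the first word-moment to keep the word-length of these expanding words linear in $n$ (the paper makes this precise via the rate of escape $v_{\mcF}(m)$ and obtains the quantitative bound $\lambda_{\exp}\ge |\lambda_{RD}(m;\nu)|/v_{\mcF}(m)$). But there is a genuine gap at the crucial step, which you yourself flag as ``the hard part'' and then resolve in the wrong direction. You propose to take, as the measure in the conclusion, ``this stationary measure for the reversed walk,'' i.e.\ a measure stationary for $\check{m}$ (the image of $m$ under $g\mapsto g^{-1}$). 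That object does not satisfy the statement: the theorem asks for an \emph{$m$-stationary} measure, and a $\check{m}$-stationary measure is not $m$-stationary in general. Worse, even for that measure your argument does not close: the points at which the inverse words $g_1^{-1}\circ\dots\circ g_n^{-1}$ have large derivative are the images $y=g_n\circ\dots\circ g_1(x)$ of $\nu_-$-typical points under $m$-typical words, and nothing in your outline shows these points are typical for a reversed-walk stationary measure.

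The missing idea — which is how the paper closes the loop — is that no time-reversal of the measure is needed at all: the stationarity of $\nu_-$ itself already solves the ``moving image point'' problem. Since the skew product $F(x,(g_i))=(g_1(x),(g_{i+1}))$ preserves $\widetilde{\nu}=\nu_-\times m^{\bbN}$, the images under $F^n$ of the sets
$$
A_n\cap B_n=\bigl\{(x,(g_i)) : \log (g_n\circ\dots\circ g_1)'(x)<-n(|\lambda_{RD}|-\eps), \ \|g_n\circ\dots\circ g_1\|_{\mcF}<n(v_{\mcF}(m)+\eps)\bigr\}
$$
still have $\widetilde{\nu}$-measure tending to $1$, so their projections $C_n\subset S^1$ — precisely the sets of points $y$ at which some word of length at most $n(v_{\mcF}(m)+\eps)$ has derivative at least $e^{n(|\lambda_{RD}|-\eps)}$ — satisfy $\nu_-(C_n)\to 1$. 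Hence $\nu_-$-almost every $y$ lies in infinitely many $C_n$, which (the expansion exponent being a limsup) gives $\lambda_{\exp}(y;\mcF)>0$ for $\nu_-$-a.e.\ $y$, and the theorem holds with $\nu=\nu_-$, Baxendale's measure itself. In short: the distribution of the image points is again $\nu_-$ because $\nu_-$ is stationary; your proposal replaces this one-line observation by the construction of a different measure that neither satisfies the statement nor provably carries the expansion. (A minor additional remark: the paper does not need Kingman's theorem here; it only needs that the sets $A_n$ and $B_n$ have measure tending to $1$, which follows from the definitions of the random Lyapunov exponent and of the rate of escape.)
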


This result turns out to be interesting for the study of the question of 
the regularity of the stationary measure that we explain below.

Namely, quite often the stationary measure turns out to be unique. More precisely, 
the local contraction, coming from the negativity of the random Lyapunov exponent 
of some stationary measure~$\nu$, enables to prove the uniqueness of the stationary 
measure in the basin of attraction of~$\nu$ (see~\cite{Antonov,DKN}). This provides 
the uniqueness of the stationary measure in the case where, in addition to the 
absence of a $\supp(m)$-invariant measure, either the support~$\supp(m)$ 
generates the whole group~$G$, or the action on the circle of the semigroup 
generated by~$\supp(m)$ is minimal.
We recall the precise definitions and statements of these results later 
in Section~\ref{ss:Random}.

Now, in the case of uniqueness of the stationary measure, the latter is 
either absolutely continuous, or singular with respect to the Lebesgue 
measure (as both the absolutely continuous and 
the singular parts would be stationary). 
This dichotomy is at the origin of very interesting problems; in particular, 
there is the following conjecture, that we learned from 
Y.~Guivarc'h, V.~Kaimanovich, and F.~Ledrappier:
\begin{conjecture}\label{conj3}
For any finitely supported measure $m$ on a lattice $\Gamma<\PSL_2(\bbR)$ 
whose support generates~$\Gamma$, the corresponding stationary measure 
on the circle is singular. 
\end{conjecture}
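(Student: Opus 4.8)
The plan is to establish singularity by proving a strict drop in the Hausdorff dimension of the stationary measure. Since $\supp(m)$ generates the non-elementary lattice $\Gamma$, this group preserves no probability measure on $S^1=\partial\mathbb{H}^2$; hence by the uniqueness results recalled in Section~\ref{ss:Random} the $m$-stationary measure $\nu$ is unique and non-atomic. Being the unique such measure, its absolutely continuous and singular parts cannot coexist (both would again be stationary, as already noted before Conjecture~\ref{conj3}), so $\nu$ is either absolutely continuous or singular with respect to $\Leb$. Because an absolutely continuous measure has Hausdorff dimension $1$, it suffices to prove $\dim_H\nu<1$.

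To compute $\dim_H \nu$ I would use the exact-dimensionality of $\nu$ together with Ledrappier's boundary dimension formula for $\PSL_2(\bbR)$, namely
\[
\dim_H \nu \;=\; \frac{h_F(\nu)}{2\chi},
\]
where $\chi=\lim_{n\to\infty}\frac1n\log\|g_n\cdots g_1\|$ is the Lyapunov exponent of the random matrix product and
\[
h_F(\nu)\;=\;\int_G\int_{S^1}-\log\frac{dg^{-1}_*\nu}{d\nu}(x)\,d\nu(x)\,dm(g)
\]
is the Furstenberg entropy. The exponent $\chi$ is strictly positive by Furstenberg's theorem (the hypotheses exclude a common invariant measure), in agreement with the positivity provided by Theorem~\ref{thm:positive}. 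With this formula, $\dim_H\nu<1$ is \emph{equivalent} to the strict inequality $h_F(\nu)<2\chi$.

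The decisive step is therefore to prove $h_F(\nu)<2\chi$ for every finitely supported $m$. I would first bound the Furstenberg entropy by the Avez (random walk) entropy, $h_F(\nu)\le h_{RW}$, and then invoke the fundamental inequality $h_{RW}\le \ell\,v$, where $\ell$ is the speed of escape of the walk in the hyperbolic metric and $v$ is the exponential growth rate of $\Gamma$; for a lattice in $\PSL_2(\bbR)$ the visual normalization identifies $v$ with $1$ and $\ell$ with $\chi$, so that $h_F(\nu)\le 2\chi$ always holds, with equality precisely when $\nu=\Leb$. The heart of the matter is to show this equality fails: by the rigidity of the fundamental inequality (due to Kaimanovich--Le~Prince and Blach\`ere--Ha\"issinsky--Mathieu), equality would force the Green metric of the random walk to be proportional to the hyperbolic distance, and hence the harmonic measure to coincide with Lebesgue. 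One must exclude this for a finitely supported step distribution, which is where comparison of the discrete Green function with the continuous one and local-limit-type estimates enter.

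I expect this last rigidity step to be the main obstacle, and its difficulty depends sharply on $\Gamma$. When $\Gamma$ is non-uniform (as for $\PSL_2(\bbZ)$), the cusp geometry and the transience of the walk near the cusps make the strict inequality, and thus the dimension drop, comparatively accessible. When $\Gamma$ is cocompact the harmonic measure of a finitely supported walk could a priori mimic Lebesgue at all scales, and ruling this out requires a genuine non-proportionality statement between the Green metric and the Riemannian metric --- the truly hard analytic input on which the whole argument hinges.
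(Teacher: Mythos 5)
First, a point of orientation: the statement you were asked to prove is Conjecture~\ref{conj3}, and the paper does \emph{not} prove it --- it is recorded as an open conjecture. What the paper offers are partial results: the non-cocompact case is attributed to Guivarc'h and Le~Jan~\cite{GLJ}, and the paper's own contribution is a different sufficient criterion, namely Theorem~\ref{thm:positive} combined with Corollary~\ref{c:Sing}: if the (deterministic, worst-case) Lyapunov expansion exponent of the group vanishes Lebesgue-almost everywhere, then the unique stationary measure is singular, because by Theorem~\ref{thm:positive} it must charge only points with \emph{positive} expansion exponent. That criterion is actually verified for $\PSL_2(\bbZ)$ (Theorem~\ref{PSL2Z}), and Question~\ref{q:Gamma} asks whether it holds for every non-cocompact lattice.

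Your proposal does not prove the statement either, and the gap is not a technicality: it \emph{is} the statement. Your chain of reductions --- singularity $\Leftrightarrow$ $\dim_H\nu<1$ $\Leftrightarrow$ $h_F(\nu)<2\chi$ $\Leftrightarrow$ failure of equality in the fundamental inequality $\Leftrightarrow$ (by the rigidity results you invoke) $\nu$ not equivalent to Lebesgue --- is a closed loop: given the uniqueness dichotomy you correctly set up at the start, ``$\nu$ not equivalent to Lebesgue'' is exactly ``$\nu$ singular,'' which is where you began. The rigidity theorems of Blach\`ere--Ha\"issinsky--Mathieu type only \emph{characterize} the equality case; they do not exclude it, and excluding it for every finitely supported $m$ on every lattice is precisely Conjecture~\ref{conj3}. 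You concede this yourself (``the truly hard analytic input on which the whole argument hinges''), so what you have written is an honest reformulation, not a proof. Two further inaccuracies: (i) your normalization is inconsistent --- in the hyperbolic metric the drift is $\ell=2\chi$, not $\chi$; with your identifications the fundamental inequality would read $h_F\le\chi$, not $h_F\le 2\chi$; (ii) for non-cocompact lattices the group is virtually free and not quasi-isometric to $\mathbb{H}^2$, so the word-metric rigidity statements compare the Green metric to the word metric and the harmonic measure to a Patterson--Sullivan measure on a Cantor-set boundary, not to Lebesgue measure on $S^1$; even the equality-case characterization you lean on does not apply there as stated. If you want a non-circular reduction, the paper's route is the model to follow: reduce singularity to a property that can be checked by hand --- vanishing of the Lyapunov expansion exponent, as verified for $\PSL_2(\bbZ)$ in Theorem~\ref{PSL2Z} via the ``greedy algorithm'' and the parabolic fixed points --- rather than to a statement equivalent to the conjecture itself.
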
 

This was proven by Y.~Guivarc'h and Y.~Le~Jan in~\cite{GLJ} for non-cocompact 
lattices (\emph{i.e.}, the quotient of the hyperbolic disc by the action has at least 
one cusp). Moreover, their result still holds if the measure $m$ on $\Gamma$ has 
\emph{finite first word-moment}, that is, if 
$$
\int_G \|g\|_{\mcF} \, dm(g)<\infty,
$$
where $\|\cdot\|_{\mcF}$ denotes the word-norm with respect to some finite generating 
set~$\mcF\subset \Gamma$.

In this direction our Theorem~\ref{thm:positive} provides the following

\begin{corollary}\label{c:Sing}
Assume that the Lyapunov expansion exponent for a finitely generated group~$G$ of 
$C^2$ circle diffeomorphisms is equal to zero. Then for any measure $m$ on~$G$ such 
that the corresponding stationary measure~$\nu$ is unique and non 
$\supp(m)$-invariant, $\nu$ is singular with respect to the Lebesgue one.

In particular, this holds for measures~$m$ with 
finite first word-moment, without $\supp(m)$-common invariant measures, 
and such that either the support $\supp(m)$ generates~$G$, 
or the semigroup generated by this support acts minimally on the circle. 
\end{corollary}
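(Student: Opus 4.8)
The plan is to play two almost-everywhere statements for the expansion exponent against each other: under the hypothesis it vanishes Lebesgue-almost everywhere, while Theorem~\ref{thm:positive} manufactures a stationary measure on which it is positive. Uniqueness then forces these two measures to live on disjoint sets, which is exactly singularity.

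First I would record the Lebesgue-a.e. behaviour. Since the relation $\lambda_{\exp}(G)=0$ presupposes that $G$ acts minimally, Hurder's Theorem~\ref{thm:expansion} applies (recall $\Diff^2_+\subset\Diff^{1+\alpha}_+$) and gives that the expansion exponent is constant and equal to~$0$ for Lebesgue-almost every point. Hence the set
$$
E := \{\, x\in S^1 : \lambda_{\exp}(x) > 0 \,\},
$$
which is measurable and whose definition does not depend on the generating system~$\mcF$, satisfies $\Leb(E)=0$.

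Next I would set up the hypotheses of Theorem~\ref{thm:positive}. The uniqueness of~$\nu$ together with its non-invariance rules out any $\supp(m)$-common invariant measure: such a measure would be stationary, hence equal to~$\nu$ by uniqueness, contradicting the assumed non-invariance. As $m$ has finite first word-moment, Theorem~\ref{thm:positive} then yields an $m$-stationary measure~$\nu'$ for which $\lambda_{\exp}>0$ at $\nu'$-almost every point, that is $\nu'(E)=1$. By uniqueness $\nu'=\nu$, so $\nu(E)=1$. Combined with $\Leb(E)=0$, this exhibits a Borel set carrying all of~$\nu$ and none of~$\Leb$, whence $\nu\perp\Leb$.

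Finally, for the \textit{in particular} clause I would check that the listed conditions force uniqueness and non-invariance, so that the first part applies. Finite first word-moment and the absence of a $\supp(m)$-common invariant measure are assumed directly, and non-invariance of any stationary measure is then immediate (an invariant stationary measure would be a common invariant one). Uniqueness is supplied by the results recalled in Section~\ref{ss:Random} (see~\cite{Antonov,DKN}): the negativity of the random Lyapunov exponent of some stationary measure, guaranteed by the absence of a common invariant measure, yields uniqueness once either $\supp(m)$ generates~$G$ or the semigroup it generates acts minimally. The main subtlety is not any single estimate but the bookkeeping that the measure~$\nu'$ produced abstractly by Theorem~\ref{thm:positive} is genuinely the same object as the given~$\nu$; this identification is precisely what uniqueness delivers, and it is the hinge on which the whole argument turns.
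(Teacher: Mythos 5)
Your proposal is correct and is essentially the argument the paper intends (the corollary is stated as a direct consequence of Theorem~\ref{thm:positive}, with no separate written proof): Lebesgue-a.e.\ vanishing of $\lambda_{\exp}$ via Hurder's Theorem~\ref{thm:expansion}, Theorem~\ref{thm:positive} producing a stationary measure charging only points with positive exponent, and uniqueness identifying that measure with $\nu$, forcing $\nu\perp\Leb$; the treatment of the \emph{in particular} clause via Antonov/DKN matches Section~\ref{ss:Random} as well. Your implicit use of the finite first word-moment hypothesis in the first part is exactly the reading the paper requires, since Theorem~\ref{thm:positive} cannot be invoked without it.
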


\begin{remark}
Corollary~\ref{c:Sing} applies (under the same conditions on the measure~$m$) 
for the minimal actions of the 
Thompson group~$T$ and of $\PSL_2(\bbZ)$ that we mentioned earlier. 
\end{remark}

\begin{remark}\label{r:limit}
According to the definition, the Lyapunov expansion exponent corresponds to an upper limit. Therefore, the ``exponentially expanding'' compositions for $\nu$-almost every point in the conclusion of Theorem~\ref{thm:positive} are proved to exist only for an infinite subsequence of lengths~$n_k$. However, under some more restrictive assumptions on the moments (that are satisfied, for instance, if the measure $m$ is finitely supported), one can prove that for $\nu$-almost every point the compositions with exponentially big derivative exist for \emph{every}~$n$.
\end{remark}

To end this paragraph, we would like to notice a subtle and interesting difference, concerning the question of the finiteness of the first moment; this difference was pointed out to us by V.~Kaimanovich. Namely, the first moment for a measure on a lattice $\Gamma<\PSL_2(\bbR)$ can be measured in two different ways: in the sense of (the logarithm of) the $\Diff^1$-norm\footnote{In fact, the value of $\log \|g\|_{\Diff^1}$ is equivalent to that of $\Var_{S^1} (\log g')$. A direct computation shows for $g\in\PSL_2(\bbR)$ that $\Var_{S^1} (\log g')=4 \dist_{hyp}(g(0),0)$, where the circle $S^1\subset\bbC$ is the unit circle, and the map~$g$ is naturally extended to the interior of the hyperbolic disc~$\bbD\ni 0$. On the other hand, $\PSL_2(\bbR)$ can be thought as the unit tangent bundle $T_1(\bbD)$  (which is hence a hyperbolic metric space), so the latter value is equivalent to the hyperbolic distance from~$g$ to the identity.}, and in the sense of the word-norm. Since 
$$
\log^+\|f\circ g\|_{\Diff^1} \le \log^+\|f\|_{\Diff^1} + \log^+\|g\|_{\Diff^1},
$$
finiteness of the first word-moment implies the finiteness of the first $\Diff^1$-moment. However, the converse does not hold. Indeed, the Furstenberg discretization procedure~\cite{Furstenberg2} allows to find a measure $m$ on $PLS_2(\bbZ)$ such that the $m$-stationary measure is the Lebesgue one. Due to the result of Y.~Guivarc'h and Y.~Le Jan~\cite{GLJ} (or due to our Corollary~\ref{c:Sing}), such a measure $m$ cannot be of finite first word-moment. Nevertheless (see~\cite{Kaimanovich-LePrince} and references therein), it can be chosen with a finite $\Diff^1$-moment! 

\subsection{Structure of the paper}

In Section~\ref{s:background}, we give some background on the problems that we consider 
and we recall several facts that will be used later. Section~\ref{s:questions} is devoted to 
the open questions. Some of these questions were widely known before this work, 
and some other naturally appeared when writing this paper. 
Section~\ref{s:distortion} is devoted to the description of one of the main tools in 
one-dimensional dynamics, namely the control of distortion technique. We also give therein
the proof of Theorem~\ref{Conformal}. In Section~\ref{s:examples}, we study 
two examples discussed before, the $\PSL_2(\bbZ)$ and Thompson group actions, 
and prove Theorems~\ref{Thompson} and~\ref{PSL2Z}. Finally, Section~\ref{s:proof} is 
devoted to the remaining proofs, \emph{i.e.} those of Theorems~\ref{description}, \ref{ConformalStar}, and~\ref{thm:positive}.


\section{Background}\label{s:background}

\subsection{Minimality, ergodicity, and exceptional minimal sets}\label{ss:minimality}

We begin by pointing out that all the assumptions of Conjecture~\ref{conj} are 
essential, and none of them can be omitted. Concerning the dimension, a celebrated
construction by H.~Furstenberg~\cite{Furstenberg} leads to examples of
area preserving diffeomorphisms of the torus~$\mathbb{T}^2$ which 
are minimal but not ergodic. These
diffeomorphisms are skew-products over irrational rotations, that is, maps of the form
$F:(x,y)\mapsto (x+\alpha, y+\varphi(x))$. Assume that the angle $\alpha$ is Liouvillian,
and that the cocycle corresponding to the function~$\varphi(x)$ is measurably trivial but
nontrivial in the continuous category. Then the map $F$ appears to be measurably conjugate
to an horizontal rotation $(x,y)\mapsto (x+\alpha, y)$, and thus non-ergodic; however,
the absence of a continuous conjugacy allows to show that it is minimal.

For the remaining hypotheses, following the general approach of \cite{DKN},
it is convenient to distinguish two cases according to the existence
or nonexistence of an invariant 
measure.

The hypothesis concerning smoothness is very subtle. Indeed, it is not difficult to construct
minimal circle homeomorphisms that are non-ergodic. However, the construction of $C^1$
diffeomorphisms with these properties is quite technical and much more difficult: see
\cite{OR}. (It is very plausible that, by refining the methods from \cite{OR}, one may
actually provide examples of $C^{1+\alpha}$ such diffeomorphisms for any $0 <\alpha <1$.)
For a non measure preserving example, one may follow (an easy extension of) the construction
in \cite{Ghys-Sergiescu} starting with a slight modification of the expanding map constructed
in \cite{Quas} (so that it becomes tangent to the identity at the end points). For $n \geq 10$,
this provides us examples of $C^1$ actions of the $n$-adic Thompson groups (which are finitely
presented) that are minimal but not ergodic. (We point out however that these examples 
seem to be non $C^{1+\alpha}$ smoothable for any $\alpha > 0$.)

Finally, to illustrate the finite generation hypothesis, one may
construct an example of an Abelian group action via a sequence of
actions of $G_n=\bbZ/2^n\bbZ$, where the $n^{\mathrm{th}}$ action is
obtained from the previous one by specifying a particular choice of
a ``square root'' of the generator. Such a choice is equivalent to
the choice of a two-fold covering $S^1/G_n \to S^1/G_{n+1}$. It may
be checked that with a well chosen sequence of actions, one can
ensure that the resulting action of the group $G=\bigcup_n G_n$ is
minimal, but it is non-ergodic and even non-conservative: there is a set of positive 
measure which is disjoint from all of its images by nontrivial elements of~$G$. Although 
this construction seems to be well known, we didn't find it in 
the literature, and for the reader's convenience we provide more 
details at the end of Section~\ref{s:distortion}. We point out, however, 
that there exists a simpler example (due to D.~Sullivan~\cite{Sullivan}) 
of a non-ergodic, minimal, smooth group action on the circle without 
invariant 
measure. Namely, fixing a Cantor set $\Lambda \subset S^1$ 
of positive Lebesgue measure, for each connected component $I$ of 
$S^1 \setminus \Lambda$ one chooses an hyperbolic reflection~$g_I$ 
with respect to the geodesic joining the endpoints of~$I$. Then the 
action of the group generated by the $g_I$'s is minimal (this can be 
checked using the fact that every orbit intersects all the complementary 
intervals of $\Lambda$, and thus accumulates everywhere on~$\Lambda$). 
Nevertheless, it is non-ergodic (and even non-conservative), since the 
set~$\Lambda$ of positive measure is disjoint from all of its nontrivial images.

Let us now consider the case of a subgroup~$G\subset \Diff^2(S^1)$ 
satisfying the hypotheses of Conjecture~\ref{conj}. We first point out 
that the ergodicity is a nontrivial issue even when~$G$ is generated by 
a single diffeomorphism. Indeed, Poincar\'e's theorem 
implies that every minimal circle homeomorphism is topologically 
conjugate to an irrational rotation. However, for ``an essential part'' of the 
set of minimal diffeomorphisms the conjugating map appears to be
singular, and therefore the ergodicity with respect to the Lebesgue measure after
conjugacy does not imply the ergodicity with respect to the Lebesgue measure before it.
Nevertheless, the conjecture for this case has been settled independently by
A.~Katok for $C^{1+bv}$ diffeomorphisms (see for instance~\cite{Katok}) and by
M.~Herman for $C^{1 + lip}$ diffeomorphisms (see~\cite{Herman}). Actually, Katok's
proof uses arguments of control of distortion for the iterations that are based on
the existence of decompositions of the circle into arcs
which are almost permuted by the dynamics (and which come from the good 
rational approximations of the rotation number).

If~$G$ has no element with irrational rotation number, the result above cannot 
be applied. However, in this case the dynamics has some hyperbolic 
behaviour. To show the ergodicity one then would like to apply the 
exponential expansion strategy. This classical procedure consists on
expanding very small intervals which concentrate a good proportion of
some invariant set, in such a way that the distortion (see a precise definition in Section~\ref{s:distortion})
of the compositions remains controlled. More precisely, the scheme works as follows. Let $A\subset S^1$
be an invariant measurable subset of positive Lebesgue measure. By Lebesgue's theorem, almost every
point~$x \!\in\! A$ is a density (or Lebesgue) point, that is, 
$$
\frac{\mu_L (U_{\delta}(x)\cap A)}{\mu_L (U_{\delta}(x))} \to 1 \quad \text{ as } \quad \delta \to 0,
$$
where $U_{\delta}(x)$ denotes the $\delta$-neighborhood of $x$. Now take $\delta > 0$
such that the proportion of points of~$A$ in $U_{\delta}(x)$ is very close to~$1$.
If one can expand this interval keeping a uniform bound for the distortion, then each one of the
``expanded'' intervals also has a proportion of points in~$A$ very close to~$1$. On the other hand, since
their length stay bounded away from zero, after passing to the limit along a sequence $\delta_n \to 0$
what we see is an interval in which the points in~$A$ form a subset of full relative measure. If the
action is minimal, this implies that $A$ is a subset of full measure in the circle.

The arguments that we have just cited were used by the third author in~\cite{Navas-ens} 
as well as by S.~Hurder in~\cite{Hurder} for establishing their ergodicity results that we mentioned 
in the Introduction: roughly speaking, if the expansion can be done 
``sufficiently quickly'', then minimal actions are necessarily ergodic. 

For exceptional minimal sets, the zero Lebesgue measure conjecture was proven by 
the third author for the case where for each $x \in \Lambda$ there exists $g \in G$ such that
$g'(x) > 1$ (see~\cite{Navas-ens}). Later on, S.~Hurder showed in~\cite{Hurder} that 
the Lebesgue measure of the
intersection $\Lambda \cap (S^1 \setminus \{x\mid 
\lambda_{exp}(x)=0\})$ is equal to zero.
Also, the conjecture has been proved by Cantwell and Conlon in~\cite{CC} for the 
case where the dynamics is~``Markovian''. 

Once again, both hypotheses of G.~Hector Conjecture~\ref{conj2} are essential, and one 
can construct counter-examples in the case where they are not 
satisfied (see for instance~\cite{Bowen} and~\cite{Herman} 
for the hypothesis concerning smoothness).

\subsection{Random dynamics}\label{ss:Random}

Random dynamical systems have been studied for a long time, and we are 
certainly unable to recall here all (and even the main) achievements of this theory. 
We shall then restrict ourselves to those that will be necessary for 
the exposition.

First, we would like to recall that a random dynamics can be modeled in terms of a single 
map. Indeed, consider the map 
$$
F: X\times G^{\bbN} \to X\times G^{\bbN}, \quad F(x,(g_i)_{i=1}^{\infty}) = (g_1(x), (g_{i+1})_{i=1}^{\infty}),
$$
which is a skew-product over the left shift on $G^{\bbN}$.
In terms of this map, instead of saying that we consider random compositions of maps, we can say that we take a random point in $G^{\bbN}$, distributed with respect to~$m^{\bbN}$, and then we consider the iterations of $F$ on the fiber over this point. A direct computation then shows that a measure $\nu$ is $m$-stationary if and only if the measure $\nu\times m^{\bbN}$ is $F$-invariant.

The latter remark allows to apply to the random dynamics all the arsenal of techniques 
from Ergodic Theory~--- Krylov-Bogolubov procedure (implying the existence of stationary 
measures), Birkhoff Ergodic Theorem (ensuring the convergence of random time averages), etc.

In particular, one can define Lyapunov exponents for a smooth random dynamics on a compact manifold, provided that the first $\Diff^1$-moment of $m$ is finite. We will not do this in a general situation, and we will restrict ourselves to the case of the dynamics on the circle. 
In this case, the \emph{random Lyapunov exponent} corresponding to a point~$x\in S^1$ and 
to a sequence~$(g_i)\in G^{\bbN}$ is defined as the limit
\begin{equation}\label{eq:random}
\lim_{n\to\infty} \frac{1}{n} \log (g_n\circ \dots g_1)'(x).
\end{equation}
Simple arguments show that, for a given measure~$m$ on~$G$ with 
finite first $\Diff^1$-moment, and for any $m$-stationary ergodic measure~$\nu$, 
the limit~\eqref{eq:random} is constant (in particular, it exists) almost everywhere 
w.r.t. the measure $\nu \times m^{\bbN}$. By Birkhoff Ergodic Theorem, this constant equals 
$$
\int_{S^1} \int_G \log g'(x) \, dm(g) \, d\nu(x),
$$ 
and we denote it by~$\lambda_{RD}(m;\nu)$. 

Now, according to a general principle in one-dimensional dynamics, which has been 
developed in the work of many authors, for a random dynamics on the circle that does 
not preserve any measure, ``random compositions contract''. In other words, under 
certain general assumptions (for instance, the system should be supposed to be non-factorizable) one can conclude that a long composition, most probably, will map almost all the circle (except for a small interval) into a small interval. Equivalently, for any two given points of the circle, most probably their orbits along the same random sequence of compositions will approach each other. 

We would like to recall here the following results illustrating this principle. In his 
seminal work~\cite{F63}, H.~Furstenberg proved the contraction statement for 
projective dynamics in arbitrary dimension (in particular, on the circle). 
The work of V.A.~Antonov~\cite{Antonov} established the contraction for any minimal and 
inverse-minimal non-factorizable random dynamics on the circle (one can also find an 
exposition of this work in~\cite{Kleptsyn-Nalski, Navas-Es}). In his excellent work, 
P.~Baxendale~\cite{Baxendale} studied the sum of the Lyapunov exponents for a smooth random
dynamics on a compact manifold of any dimension.


More precisely, P.~Baxendale proved that for such a dynamics, if the 
first $\Diff^1$-moment is finite (so that the random Lyapunov exponents 
are well-defined) and there is common invariant measure, 
there exists an ergodic stationary measure such that the sum of its Lyapunov exponents 
(which can be thought of as the exponential rate of volume changement) is negative. In particular, 
for the circle (as it is one-dimensional), this impies the negativity of the Lyapunov exponent, which, in its turn (due to the distortion control arguments) implies local contraction by the random dynamics.

Together with the results of V.A.~Antonov, the above result becomes 
a powerful tool for studying group dynamics on the circle. In particular, this was exploited 
by the authors in~\cite{DKN}, where they proved the (global) contraction 
property for a symmetric measure~$m$ (in fact, the same arguments 
work if the support~$\supp(m)$ 
generates the acting group as a semigroup). 

The global contraction property implies the uniqueness of the stationary measure (see~\cite{Antonov, DKN}). Moreover, if the contraction property holds only locally, then the stationary measure is still unique provided that the system is minimal. 




\section{Open questions}\label{s:questions}

We must point out that the actions of Thompson group~$T$ and of $\PSL_2 (\bbZ)$ that we
deal with in this article are (up to some easy modifications) the only minimal, 
smooth actions of non-Abelian groups on the circle for which we know that $\NE \neq \emptyset$. 
This motivates the following

\begin{question}\label{q:Star}
Does every (sufficiently smooth) minimal action on the circle of a 
non-Abelian finitely generated group satisfy property~\smin?
\end{question}

Both the positive or negative answer to this question would be interesting: the positive one
would lead to an interesting general property of minimal actions on the circle, and the negative
one would give an example of a ``monster'', certainly having very strange properties.

According to Theorems~\ref{Thompson} and~\ref{PSL2Z}, for the actions of~$T$ 
and $\PSL_2 (\bbZ)$ the corresponding Lyapunov expansion exponents are zero. 
Therefore, the following question makes sense:

\begin{question}\label{q:L0}
Let $G$ be a finitely generated non-Abelian group (perhaps having property~\smin) 
of (sufficiently smooth) circle diffeomorphisms. If the action of $G$ is minimal 
and $\NE(G)\neq \emptyset$, is it necessarily true that $\lambda_{\exp}(G)=0$~?
\end{question}

Once again, both the positive or negative answer to this question 
are interesting, the positive one leading to a general property, and 
the negative one providing us of an interesting and perhaps strange action.

A particular case of Question~\ref{q:L0}, closely related to 
Conjecture~\ref{conj3}, is the following one:
\begin{question}\label{q:Gamma}
Is it true that for every non-cocompact lattice $\Gamma<\PSL_2(\bbR)$, the Lyapunov expansion exponent of its action on the circle is zero?
\end{question}
A positive answer to this question looks very plausible. By joining it to our 
Theorem~\ref{thm:positive}, it would give another proof to the singularity 
theorem by Y.~Guivarc'h and Y.~Le Jan cited in the Introduction.

In fact, adapting the arguments of the proof of Theorem~\ref{PSL2Z} (and using some 
techniques from Riemann Surfaces Theory), one can show that, for every lattice as 
above, its action on the circle satisfies the property~\smin. Moreover, the $\NE$-set turns 
out to be non-empty and corresponds in some precise sense to the set of cusps in the 
quotient surface. The proofs of these facts are, however, rather technical, and we do not 
give them here since this would overload the paper.


For the study of conformal measures, the results we stated in the Introduction lead to many other questions that seem interesting to us. First, the fact that for a minimal dynamics the only non-atomic conformal measure is the Lebesgue one, was proven only under the assumption of property~\smin. In would be interesting to answer this question in general:
\begin{question}\label{q:conformal}
Is it true that for any minimal smooth action of a finitely generated group on the circle the only non-atomic conformal measure is the Lebesgue one?
\end{question}

By Theorem~\ref{ConformalStar}, a positive answer to Question~\ref{q:Star} would automatically imply a positive answer to Question~\ref{q:conformal},
but certainly the latter question can be attacked independently (and perhaps will 
be simpler to handle via some other way).

Analogous questions, as well as several new ones, are interesting in presence of an exceptional minimal set: does every finitely generated action with an exceptional minimal set satisfy property~\Ls? Is it true that for every finitely generated group action (not necessarily satisfying property~\Ls) with an exceptional minimal set~$\Lambda$, there exists at most one non-atomic conformal measure supported on it? Does such a measure always (or under the assumption of property~\Ls) exist? If yes, does it coincide with the normalized Hausdorff measure (which then will be non-vanishing and finite)? In the case of existence of such a measure, does its conformal exponent coincide with the the Hausdorff dimension of the minimal set? Is it true that, in the general (\emph{i.e.}, minimal dynamics 
or exceptional minimal set) situation, a conformal measure with exponent greater than one 
is atomic? 

To conclude this section, we would like to state a question due to \'E.~Ghys concerning 
the dichotomy between absolute continuity and singularity for stationary measures. 
To motivate this question, first notice that, in the examples of singular stationary measure
for minimal actions that we have already mentioned 
(Thompson group~$T$, non-cocompact lattices in~$\PSL_2(\bbR)$), 
the corresponding groups are generated by maps that are ``far'' from the identity. 

Moreover, singular stationary measures naturally appear for (expanding) actions 
of fundamental groups of closed genus $g>1$ surfaces. Indeed,
to each conformal structure on such a surface corresponds an action of 
its fundamental group on the circle (viewed as the boundary of its
universal cover, \emph{i.e.} the Poincar\'e disc). Through the actions corresponding 
to different complex structures are topologically conjugate, the conjugating 
map is always singular. Thus, among the stationary measures corresponding to 
different structures (for the same probability distribution on the group), at most one is 
absolutely continuous. Notice however that, once again, these groups are generated 
by maps that are ``far'' from the identity. 

In another direction, a result due to J.~Rebelo~\cite{Rebelo} asserts that topological 
conjugacies between non-solvable groups of circle diffeomorphisms generated by elements 
\emph{near the identity} are absolutely continuous. Thus, the above 
methods for obtaining a singular stationary measure stop working if we restrict 
ourselves to such actions. 


Due to all of this, it is interesting to find out if there are examples of singular 
stationary measures for actions generated by maps close to the identity:
\begin{question}[\'E.~Ghys]
Let $G$ be a non-Abelian group of $C^2$ circle diffeomorphisms without finite orbits and generated 
as a semigroup by finitely many elements close to rotations (in the sense of~{\rm \cite{Navas-ens}}). 
If $m$ is any 
measure supported on this system of generators, is it necessarily true that the associated stationary measure on~$S^1$ is equivalent to the Lebesgue measure? Is this true under the additional assumption that the set of generators and the distribution~$m$ are symmetric with respect to inversion?
\end{question}

It is interesting to notice that, under some assumptions, for analytic perturbations 
of the trivial system the equation for the density of the stationary measure admits 
at least a formal solution as a power series in the parameter.


\section{Control of distortion and conservativity}\label{s:distortion}

We begin this section by recalling several lemmas concerning control of distortion 
which are classical in the context of smooth one-dimensional dynamics. A more 
detailed discussion may be found, for instance, in~\cite{DKN}, and in many of the 
references therein. Therefore, we will not enter into the technical
details of their proofs here, and we will just briefly describe the ideas. We begin with a
general definition.

\begin{definition} Given two intervals $I,J$ and a $C^1$ map $F\!: I\to J$ which is a
diffeomorphism onto its image, we define the \emph{distortion coefficient} of $F$ on~$I$ by
$$
\varkappa (F;I) :=\log \Big( \frac{\max_I F'}{\min_I F'} \Big),
$$
and its \emph{distortion norm} by
$$
\eta (F;I) := \sup_{\{x,y\} \subset I} \frac{\log \big(
\frac{F'(x)}{F'(y)}\big)}{|F(x)-F(y)|}
= \max_{J} \left| \big( \log ( (F^{-1})')\big)'\right|.
$$
\end{definition}

It is easy to check that the distortion coefficient is subadditive under composition. Moreover,
by Lagrange Theorem, one has $\varkappa(F,I)\le C_F |I|,$ where the constant $C_F$ depends only
on the $\Diff^2$-norm of~$F$ (indeed, one can take $C_F$ as being the maximum of the absolute
value of the derivative of the function $\log(F')$). This implies immediately the following

\begin{proposition}\label{p:sum}
Let $\mathcal{F}$ be a subset of $\Diff^2_+(S^1)$ which is bounded with respect to the $\Diff^2$-norm.
If $I$ is an interval on the circle and $f_1,\dots,f_n$ are finitely many elements chosen
from $\mcF$, then
$$
\varkappa(f_n \circ \cdots \circ f_1;I) \le
C_{\mcF} \sum_{i=0}^{n-1} |f_i \circ \cdots \circ f_1 (I)|,
$$
where the constant $C_{\mcF}$ depends only on the set~$\mcF$.
\end{proposition}

In other words, if we compose several ``relatively simple'' maps, then a bound
for the sum of the lengths of the successive images of an interval $I$ provides
a control for the distortion of the whole composition over $I$. Using this fact 
one can show the following

\begin{corollary}\label{cor:estimates}
Under the assumptions of Proposition~\ref{p:sum}, let us fix a point $x_0 \in I$,
and let us denote $F_i := f_i \circ \cdots \circ f_1$, $I_i := F_i(I)$, and
$x_i := F_i(x_0).$ Then the following inequalities hold:
\begin{equation}
\exp \big( -C_{\mcF}\sum_{j=0}^{i-1} |I_j| \big) \cdot \frac{|I_i|}{|I|} \le F_i'(x_0)
\le \exp \big( C_{\mcF}\sum_{j=0}^{i-1} |I_j| \big) \cdot \frac{|I_i|}{|I|} ,
\end{equation}
\begin{equation}\label{eq:lsum}
\sum_{i=0}^n |I_i| \le  |I|
\exp \big( C_{\mcF} \sum_{i=0}^{n-1} |I_i| \big) \sum_{i=0}^{n} F_i'(x_0).
\end{equation}
\end{corollary}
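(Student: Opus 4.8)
The plan is to read off both inequalities directly from the distortion estimate of Proposition~\ref{p:sum}, the only additional ingredient being the Mean Value Theorem. First I would fix the bookkeeping: writing $F_0=\id$, one has $I_0=I$, and Proposition~\ref{p:sum} applied to the composition $F_i=f_i\circ\cdots\circ f_1$ gives, for every $i\le n$,
$$
\varkappa(F_i;I)=\log\Big(\frac{\max_I F_i'}{\min_I F_i'}\Big)\le C_{\mcF}\sum_{j=0}^{i-1}|I_j|.
$$

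Next, since $F_i$ maps $I$ diffeomorphically onto $I_i$, the average value of $F_i'$ over $I$ equals $|I_i|/|I|$; hence by the intermediate value property this ratio lies between $\min_I F_i'$ and $\max_I F_i'$. Comparing the value $F_i'(x_0)$ with this ratio, and using that both $\max_I F_i'/\min_I F_i'$ and its reciprocal are controlled by $\exp(\pm\varkappa(F_i;I))$, I obtain
$$
\exp\big(-\varkappa(F_i;I)\big)\cdot\frac{|I_i|}{|I|}\le F_i'(x_0)\le \exp\big(\varkappa(F_i;I)\big)\cdot\frac{|I_i|}{|I|}.
$$
Substituting the displayed distortion bound for $\varkappa(F_i;I)$ yields the first pair of inequalities of the corollary.

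For the second inequality, I would rearrange the lower bound just obtained into the form $|I_i|\le |I|\exp\big(C_{\mcF}\sum_{j=0}^{i-1}|I_j|\big)\,F_i'(x_0)$ and sum over $i=0,\dots,n$. The key observation is that the partial sums $\sum_{j=0}^{i-1}|I_j|$ are nondecreasing in $i$, so each exponential factor is bounded above by the single quantity $\exp\big(C_{\mcF}\sum_{j=0}^{n-1}|I_j|\big)$; pulling this uniform factor out of the sum produces exactly \eqref{eq:lsum}.

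I do not anticipate any genuine obstacle: the statement is essentially a bookkeeping consequence of Proposition~\ref{p:sum} together with the elementary comparison above. The only point that requires a little care is the treatment of the index $i=0$, where $F_0=\id$ is the empty composition, the sum $\sum_{j=0}^{-1}$ is empty, and $|I_0|=|I|$; one checks directly that both inequalities hold as equalities in this degenerate case, so the induction-free summation remains valid.
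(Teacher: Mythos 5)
Your proof is correct and is essentially the argument the paper intends: the paper states Corollary~\ref{cor:estimates} as a direct consequence of Proposition~\ref{p:sum} without writing out details, and your derivation (comparing $F_i'(x_0)$ with the mean value $|I_i|/|I|$ via the distortion coefficient, then summing the resulting bound $|I_i|\le |I|\exp\big(C_{\mcF}\sum_{j=0}^{i-1}|I_j|\big)F_i'(x_0)$ with the exponential factor bounded uniformly by the $i=n$ case) is exactly the standard route, including the correct bookkeeping that keeps the exponent's sum at index $n-1$ while the left-hand sum runs to $n$. No gaps.
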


Notice that the sum in the exponential in~\eqref{eq:lsum} goes up to $i \! = \! n-1$,
while the sum of the lengths in the left hand side expression goes up to~$n$.
Using an induction argument, this seemingly innocuous remark appears to be
fundamental for establishing the following

\begin{proposition}\label{bound}
Under the assumptions of Proposition~\ref{p:sum}, given a point
$x_0\in S^1$ let us denote \, $S := \sum_{i=0}^{n-1}
F_i'(x_0)$. \, Then for every \, $\delta
\leq \log(2) / 2 C_{\mcF} S$ \, one has \,
$\varkappa(F_n,U_{\delta/2}(x_0)) \le 2 C_{\mcF} S \delta.$
\end{proposition}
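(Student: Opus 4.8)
The plan is to take $I := U_{\delta/2}(x_0)$, which is an interval of length $|I| = \delta$, and to reduce the whole statement to a single estimate on the sum of lengths of its successive images. Indeed, Proposition~\ref{p:sum} already gives $\varkappa(F_n;I) \le C_{\mcF}\sum_{i=0}^{n-1}|I_i|$, so the conclusion will follow at once provided I can show that $\sum_{i=0}^{n-1}|I_i| \le 2S\delta$; multiplying this by $C_{\mcF}$ yields exactly the claimed bound $\varkappa(F_n;U_{\delta/2}(x_0)) \le 2C_{\mcF}S\delta$.

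To control $\sum_{i=0}^{n-1}|I_i|$ I would run a bootstrap induction on $k$, proving that $\sum_{i=0}^{k-1}|I_i| \le 2\delta\sum_{i=0}^{k-1}F_i'(x_0)$ for every $0 \le k \le n$; taking $k=n$ and recalling that $\sum_{i=0}^{n-1}F_i'(x_0) = S$ then gives precisely what is needed. The base case is immediate, since $F_0 = \id$ forces $|I_0| = |I| = \delta$ and $F_0'(x_0) = 1$. For the inductive step I would invoke inequality~\eqref{eq:lsum} of Corollary~\ref{cor:estimates} with $n$ replaced by $k$, namely $\sum_{i=0}^{k}|I_i| \le |I|\exp\!\big(C_{\mcF}\sum_{i=0}^{k-1}|I_i|\big)\sum_{i=0}^{k}F_i'(x_0)$.

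The crucial observation is that the exponential factor involves only the \emph{shorter} sum $\sum_{i=0}^{k-1}|I_i|$, to which the inductive hypothesis already applies: it is bounded by $2\delta\sum_{i=0}^{k-1}F_i'(x_0) \le 2\delta S$, so the exponent is at most $2C_{\mcF}S\delta \le \log 2$ by the standing assumption $\delta \le \log(2)/(2C_{\mcF}S)$. Hence the exponential is at most $2$, and~\eqref{eq:lsum} collapses to $\sum_{i=0}^{k}|I_i| \le 2\delta\sum_{i=0}^{k}F_i'(x_0)$, which is exactly the induction hypothesis at level $k+1$, closing the induction.

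The step I expect to be the genuine heart of the matter is this bootstrap, and in particular the ``off-by-one'' index mismatch emphasized just after Corollary~\ref{cor:estimates}: because the length sum inside the exponential stops one term earlier than the sum being estimated, the inductive hypothesis is always available to tame the exponential \emph{before} the new length $|I_k|$ is added, which is what prevents the estimate from running away. Everything else is bookkeeping: once $\sum_{i=0}^{n-1}|I_i| \le 2S\delta$ is established, combining with Proposition~\ref{p:sum} finishes the proof. I would also verify at the end that $U_{\delta/2}(x_0)$ has length exactly $\delta$, so that the numerical constants line up with the statement.
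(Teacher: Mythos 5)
Your proof is correct and is precisely the argument the paper intends: the paper only sketches it, remarking that Proposition~\ref{bound} follows ``using an induction argument'' from the index mismatch in~\eqref{eq:lsum}, and your bootstrap induction (bounding the exponential by $2$ via the inductive hypothesis and the assumption $\delta \leq \log(2)/2C_{\mcF}S$, then concluding with Proposition~\ref{p:sum}) fills in exactly those details. Nothing is missing; the constants and the off-by-one bookkeeping all check out.
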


As a consequence, if the sum of the derivatives is not too big, then up to a 
multiplicative constant one can approximate the length of the image interval 
in Proposition~\ref{p:sum} by the length of the initial interval $I$ times 
the derivative of the composition at a given point in $I$. This simple fact 
allows us already to prove the conservativity of conformal measures.
 
\begin{proof}[Proof of Theorem~\ref{Conformal}.] Let $\mathcal{F}$ be a finite family 
of generators of $G$ as a semi-group. 
Suppose that there exists a Borel subset $A$ of the circle such that 
$\mu(A)>0$, and $\mu(A \cap g(A))=0$ for every nontrivial
element $g \in G$. This immediately yields $\mu (g(A) \cap h(A)) = 0$
\hspace{0.015cm} for every $g \neq h$ in $G$, which gives
$$
1 \geq \mu \Big( \bigcup_{g \in G} g(A) \Big) =
\sum_{g \in G} \mu \big( g(A) \big) = \sum_{g \in G} \int_{A} g'(x)^{\delta}
\hspace{0.01cm} d\mu(x) = \int_{A}  \Big( \sum_{g \in G} g'(x)^{\delta} \Big)
d\mu(x).
$$ 
Therefore, for $\mu$-almost every point $x \!\in\! A$, the sum
$\sum_{g \in G} g'(x)^{\delta}$ converges, 
and since $\delta\le 1$ the same holds for the sum 
$S(x) := \sum_{g\in G} g'(x)$. Let us fix one of these
points~$x_0$, also belonging to the minimal set $\Lambda$ (we can do this, 
as the measure~$\mu$ is concentrated on $\Lambda$), and 
let $I$ be an open neighborhood centered at $x_0$
having length strictly smaller than $ \log(2) / 2
C_{\mcF} S(x_0)$. We claim that for every $g \!\in\! G$, 
and every $x\in I$, one has
$g'(x) \leq 2 g'(x_0)$. Indeed, this follows directly from
Proposition~\ref{bound} by writing $g$ as a product of generators.
Now the above implies that the $\mu$-measure of the set $B :=
\bigcup_{g \in G} g(I)$ is smaller than or equal to
$$
2^{\delta} \mu(I) \sum_{g \in G} g'(x_0)^{\delta}.
$$
Since $\mu$ is non-atomic, if $I$ is chosen small enough, then the value of this expression is 
strictly smaller than~$1$. If this is the case, the complementary set of~$B$ is of positive $\mu$-measure, and hence intersects~$\Lambda$. On the other hand, $B$ is an open $G$-invariant set containing $x_0\in\Lambda$. Therefore, $\Lambda\cap (S^{1}\setminus B)$ is a nonempty closed invariant set, strictly containted in~$\Lambda$, and this contradicts the minimality of~$\Lambda$.
\end{proof}

As we have seen in the Introduction, D.~Sullivan 
Theorem~\ref{thm:conservativity} is no longer true for 
non finitely generated groups of circle diffeomorphisms acting minimaly. 
For the sake of completeness, we provide below the details of the already 
mentioned example of a non finitely generated Abelian group of circle 
diffeomorphisms whose action is minimal but non-conservative. 

The construction works by induction. Fix a 
dense sequence of points $x_n$ in $S^1$. Let $g_1$
the Euclidean rotation of order~2, and assume that for an integer $n
\geq 2$ a generator $g_{n-1}$ of $G_{n-1}$ has been already
constructed. Let $p_{n-1} \!: S^1 \to S^1$ be the $(n-1)$-fold
covering map induced by $g_{n-1}$. For $\varepsilon_n > 0$ small
enough, the set $p_{n-1}^{-1} (p_{n-1} (U_{\varepsilon_n} (x_n))$ is
formed by $2^{n-1}$ disjoint intervals, and the lengths of these
intervals tend to zero as $\varepsilon_n$ goes to zero. Let us
enumerate these intervals (modulo $2^{n-1}$ and respecting their
cyclic order on $S^1$) by $I_{n-1}^1, \ldots, I_{n-1}^{2^{n-1}}$,
and let us denote by $J_{n-1}^i$ the maximal open interval to the
right of $I_{n-1}^i$ contained in the complementary set of the union
of the $I_{n-1}^j$'s. Now choose a generator $g_n$ of $G_n$ sending
each $I_{n-1}^i$ (resp. $J_{n-1}^i$) into $J_{n-1}^i$ (resp.
$I_{n-1}^{i+1}$), by appropriately lifting from the quotient $S^1/G_{n-1}$ a 
diffeomorphism that interchanges $p_{n-1}(U_{\varepsilon_n}(x_n))$ and 
its complementary. 

Notice that every $G_n$-orbit intersects the interval $U_{\varepsilon_n}(x_n)$. 
It is not difficult to deduce from this that, if the sequence $\varepsilon_n$ 
tends to zero as $n$ goes to infinity, then the action of $G  := \bigcup_n G_n$ 
is minimal. To ensure the non-conservativity we choose $\varepsilon_n$
sufficiently small so that
$$
\Leb \big( p_{n-1}^{-1} (p_{n-1} (U_{\varepsilon_n}(x_n))) \big) < \frac{1}{2^{n+1}},
$$
and we define a decreasing sequence of sets $A_n$, each of which is disjoint from
its nontrivial $G_n$-images, by
$$
A_0 := S^1, \quad
A_n := A_{n-1} \setminus p_{n-1}^{-1} \big( p_{n-1} (U_{\varepsilon_n}(x_n)) \big).
$$
By construction, the intersection $A := \bigcap_{n} A_n$ is a (measurable) set which is
disjoint from all of its images under nontrivial elements in $G$. Moreover, 
$$
\Leb(A) \ge 1-\sum_{n \ge 1} \Leb \big( p_{n-1}^{-1}(p_{n-1}(U_{\varepsilon_n}(x_0))) \big)
\ge 1 - \sum_{n \geq 1} \frac{1}{2^{n+1}} = 1 - \frac{1}{2}>0.
$$
This shows that the action is non-conservative.

\vspace{0.1cm}

To close this section we would like to point out that, to the best of our knowledge, 
the only examples of minimal non-ergodic group actions by $C^2$ circle diffeomorphisms 
that there exist in the literature are constructed by prescribing a positive measure 
set which is disjoint from all of its images ({\em i.e.}, they are actually non-conservative). 
This motivates the following

\begin{question}
Is every minimal and conservative action of a (non finitely 
generated) group by $C^2$ circle diffeomorphisms necessarily ergodic?
\end{question}

Notice that the minimal non-ergodic examples using Quas' construction that we 
mentioned in the Introduction are based on a different idea. However, these actions 
seem to be non $C^2$ smoothable (in many cases this follows from our Theorem~\ref{description}).


\section{Examples}\label{s:examples}
\subsection{The smooth, minimal action of Thompson group~$T$}\label{ss:Thompson}
Recall that Thompson group~T is the group of circle homeomorphisms which are piecewise linear
in such a way that all the break points, as well as their images, are dyadic rational numbers,
and which induce a bijection of the set of dyadic rationals (notice that these properties
force the derivatives on the linearity intervals to be integer powers of~$2$).

As \'E.~Ghys and V.~Sergiescu have cleverly noticed in \cite{Ghys-Sergiescu}, 
the dynamics of this group is somehow ``generated'' by a single (non invertible) 
map, namely $\varphi_0: x \mapsto 2x \mod 1$.
Indeed, $\varphi_0$ has a (unique) fixed point $x=0$ whose preorbit is exactly the set of
dyadic rationals, and Thompson group~$T$ is the set of homeomorphisms obtained by 
gluing finitely maps of the form $\varphi_0^{-k}\circ \varphi_0^l$ at some dyadic rationals
(here, for $\varphi_0^{-k}$ one can choose any of the corresponding $2^k$ branches).

The main argument of the construction in~\cite{Ghys-Sergiescu} consists in replacing $\varphi_0$
by another degree-two smooth monotonous map $\varphi$ fixing the point $x=0$ and being 
sufficiently tangent to the identity at this point. One can then define the set of
``$\varphi$-dyadically
rational'' points as the $\varphi$-preorbit of~$0$, and one can make correspond, to each
element $f \in T$, the map $[f]_{\varphi}$ which is obtained by gluing (in a coherent way)
the branches of $\varphi^{-k}\circ \varphi^l$ instead of $\varphi_0^{-k}\circ \varphi_0^l$ at the
corresponding $\varphi$-dyadically rational points. The issue here is that, since~$\varphi$ 
is tangent to the identity at~$0$, the maps obtained after gluing are smooth (actually, as smooth
as the order of the tangency is). Thus, $f\mapsto [f]_{\varphi}$ is a smooth action of the
Thompson group~$T$ on the circle.

By choosing appropriately the map~$\varphi$, the previous action can be made either minimal or having
a minimal invariant Cantor set. Here we are going to deal with the first case, which is ensured if
$\varphi$ satisfies $\varphi'(x)>1$ for all $x\neq 0$.

\begin{figure}[!h]
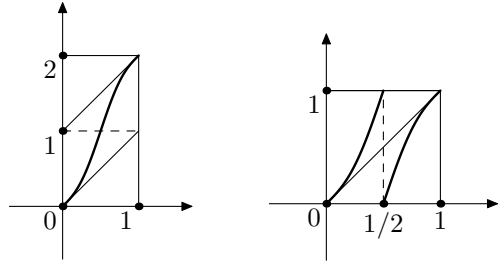

    \begin{center}
        \includegraphics{DKN1.1}
        \qquad
        \includegraphics{DKN1.2}
    \end{center}
    \caption{The map $\varphi$}
\end{figure}

We can now pass to the 

\begin{proof}[Proof of  Theorem~\ref{Thompson}] The first claim of the theorem, namely, the
equality $\NE=\{0\}$, is rather simple. Indeed, it is quite clear that for every point~$x\neq 0$
one can find an element~$g$ in the modified Thompson group~$[T]_{\varphi}$ that
coincides with $\varphi$ in a neighborhood of $x$, and this implies that $g'(x) = \varphi'(x)>1$.
On the other hand, every $g\in [T]_{\varphi}$ coincides in some right neighborhood of
$0$ with a map of the form $\varphi^{-k} \circ \varphi^l$ for some non negative integers $k,l$.
Therefore
$$g'(0)= (\varphi^{-k}\circ \varphi^l)' (0) =
(\varphi^{-k})'(\varphi^l(0))\cdot (\varphi^l)'(0) = (\varphi^{-k})'(0) \le 1,$$
where the third equality follows from the fact that $0$ is a neutral fixed point of~$\varphi$,
while the last inequality comes from the fact that $\varphi$ is a non-uniformly expanding map.

\begin{remark}\label{etienne} As it was pointed out to us by \'E.~Ghys, for slightly different maps
$\varphi$ the $\NE$-set may contain finitely many $\varphi$-periodic orbits along which the derivative
of $\varphi$ equals $1$. For instance, for the map $\varphi:x\mapsto 2x-\frac{1}{6\pi}\sin(6\pi x)$,
the induced action of $T$ (is minimal and) satisfies $\NE([T]_{\varphi})=\{0,1/3,2/3\}$.
\end{remark}

To prove the equality $\lambda_{\exp}([T]_{\varphi})=0$, we fix a finite set of elements
$\mcF=\{f_1,\ldots,f_s\}$ which generates $[T]_{\varphi}$
as a semigroup. Each of these elements $f_i$ coincide locally with maps of the form
$\varphi^{-k_{i,j}} \circ \varphi^{l_{i,j}}$. If we let $L:= \max_{i,j} \{ l_{i,j} \}$, 
then any composition of the generators having length~$n$ writes, 
near and to the right of a given point $x$, in the form
$$f_{i_1}\circ \dots \circ f_{i_n}|_{[x,x+\varepsilon]} =
\varphi^{-k_{j_1}} \circ \varphi^{l_{j_1}} \circ \dots \circ
\varphi^{-k_{j_n}} \circ \varphi^{l_{j_n}}|_{[x,x+\varepsilon]}.$$
Notice that none of the compositions $\varphi^{-1}\circ \varphi$ can be simplified. However, the identity
$\varphi\circ \varphi^{-1}=\id$ still holds, and this allows to reduce the above expression to
$$f_{i_1}\circ \dots \circ f_{i_n}|_{[x,x+\varepsilon]} =
\varphi^{-k}\circ \varphi^{l}|_{[x,x+\varepsilon]},$$
where $l\le L n$. Thus,
$$(f_{i_1}\circ \dots \circ f_{i_n})'(x) =
(\varphi^{-k}\circ \varphi^{l})'(x) \le (\varphi^l)'(x)\le (\varphi^{Ln})'(x),$$
where the inequalities follow from the non-uniform expansivity of~$\varphi$. Hence,
to show that the Lyapunov expansion exponent of $[T]_{\varphi}$ is zero, it suffices
to show that the same holds for the map~$\varphi$. To do this we will use the following
result due to T.~Inoue \cite{Inoue}, which will be discussed at the end of this section
since some of the involved ideas will be used latter.

\begin{lemma}[T.~Inoue~\cite{Inoue}]\label{l:varphi}
For Lebesgue-a.e. point $x \in S^1$, the \emph{time averages} measures
$$
\mu_{n,x}:=\frac{1}{n}\sum_{i=0}^{n-1} \delta_{\varphi^i(x)}
$$
converge to the Dirac measure~$\delta_0$ concentrated 
at the neutral fixed point~$0$ of~$\varphi$.
\end{lemma}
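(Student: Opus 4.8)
The plan is to exploit the \emph{intermittent} structure of $\varphi$: it is uniformly expanding away from $0$, but near the neutral fixed point $0$ it pushes points away so slowly (because of the high order of tangency to the identity) that a typical orbit spends an overwhelming fraction of its time in any neighborhood of $0$. Concretely, I would reduce the weak-$*$ convergence $\mu_{n,x}\to\delta_0$ to the following single statement: \emph{for every closed arc $Y\subset S^1$ with $0\notin \overline{Y}$, the number of visits $N_n(x):=\#\{0\le i<n:\varphi^i(x)\in Y\}$ satisfies $N_n(x)/n\to 0$ for Lebesgue-a.e.\ $x$.} Indeed, once this is known, given a continuous $f$ and $\eps>0$ I choose a small arc $U\ni 0$ with $|f-f(0)|<\eps$ on $U$ and set $Y=S^1\setminus U$; then $|\mu_{n,x}(f)-f(0)|\le \eps+2\|f\|_\infty\, N_n(x)/n\to\eps$, and letting $\eps\to 0$ gives the Lemma.

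To prove the visit estimate I would use the classical inducing (first-return) technique. Fixing such an arc $Y$, the first-return map $\varphi_Y\colon Y\to Y$ is defined $\Leb$-a.e.\ (a.e.\ point leaves $U$ since $0$ is repelling, and recurs by conservativity), and because each excursion passes through the expanding region, $\varphi_Y$ is uniformly expanding with bounded distortion. By the standard theory of expanding interval maps it therefore admits a unique ergodic invariant probability measure $\nu$, equivalent to $\Leb|_Y$. Writing $r\colon Y\to\bbN$ for the return time and $t_k(y)=\sum_{j=0}^{k-1} r(\varphi_Y^j(y))$ for the time of the $k$-th return, Birkhoff's theorem applied to the \emph{finite} ergodic system $(\varphi_Y,\nu)$ yields $t_k(y)/k\to\int_Y r\,d\nu$ for $\nu$-a.e.\ $y$. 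If this integral is $+\infty$, then $t_k/k\to\infty$, and since $t_{N_n}\le n<t_{N_n+1}$ we get $n/N_n\to\infty$, i.e.\ $N_n/n\to 0$; the equivalence $\nu\sim\Leb|_Y$ promotes this to $\Leb$-a.e.\ $y\in Y$, and the fact that $\Leb$-a.e.\ $x\in S^1$ eventually enters $Y$ (again because $0$ is repelling) together with the tail-insensitivity of $N_n/n$ extends it to $\Leb$-a.e.\ $x\in S^1$.

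The heart of the matter — and the step I expect to be the main obstacle — is thus proving that $\int_Y r\,d\nu=\infty$, equivalently that the a.c.\ $\varphi$-invariant measure $\mu$ reconstructed from $\nu$ by the Kac formula ($\mu(S^1)=\int_Y r\,d\nu$) is an \emph{infinite} measure. This is exactly where the hypothesis that $\varphi$ is \emph{sufficiently} (here, infinitely) tangent to the identity at $0$ is indispensable: it is responsible for the escape time from a neighborhood of $0$ being so large that the return-time tail $\nu(r>n)$ fails to be summable. Quantitatively, a point reinjected at distance $x_0$ from $0$ takes a time of order $\int_{x_0}^{\delta}\frac{dt}{\varphi(t)-t}$ to escape, so the set $\{r>n\}$ corresponds to reinjection within a distance $\rho_n$ of $0$ determined by that integral, and $\nu(r>n)\asymp \rho_n$; the flatness of $\varphi(t)-t$ at $0$ forces $\rho_n$ to decay so slowly that $\sum_n \nu(r>n)=\int_Y r\,d\nu=\infty$. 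Establishing this divergence rigorously (uniform distortion bounds giving $\nu\asymp\Leb$, and a clean lower bound on $\rho_n$ from the order of tangency) is the technical crux.

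If $\varphi$ were only mildly tangent to the identity (a finite order below the Manneville--Pomeau threshold), the integral $\int_Y r\,d\nu$ would be finite, $\mu$ would be a finite invariant probability, and the empirical measures would converge to $\mu$ rather than to $\delta_0$; so one must keep track of the tangency order throughout and use that the construction of Section~\ref{ss:Thompson} takes $\varphi$ flat at $0$ (which is also what makes the glued maps smooth). With the divergence in hand, the three paragraphs above combine to give $\mu_{n,x}\to\delta_0$ for $\Leb$-a.e.\ $x$, as claimed.
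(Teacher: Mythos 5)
Your proposal follows, in outline, exactly the paper's (i.e.\ Inoue's) argument: reduce the weak-$*$ convergence to a visit-frequency estimate for sets away from $0$; induce on a domain away from the neutral point; get an absolutely continuous ergodic invariant measure for the uniformly expanding, bounded-distortion return map; observe that the return time is non-integrable; and conclude by Birkhoff (with truncation) that the mean return time diverges, so the visit frequency vanishes --- your inequality $t_{N_n}\le n<t_{N_n+1}$ is the paper's estimate~\eqref{eq:near-0}. Your escape-time computation $\int \frac{dt}{\varphi(t)-t}$ and the resulting tail bound actually fill in a step the paper only asserts (the ``non locally integrable singularity of type $1/x$'' of $\tau$), and your remark about the Manneville--Pomeau threshold correctly identifies why the tangency of $\varphi$ to the identity is what drives the divergence.

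One step of your write-up is not justified as stated, and it is precisely the point where the paper's choice of inducing domain matters. You induce on $Y=S^1\setminus U$ for an \emph{arbitrary} small arc $U\ni 0$ and invoke ``the standard theory of expanding interval maps'' to get a unique ergodic invariant probability equivalent to $\Leb|_Y$. For piecewise expanding maps with countably many branches, that theory (e.g.\ the folklore theorem in~\cite{mane}) requires the branches to be full, or at least a Markov/big-image structure; the first-return map to an arbitrary arc has no reason to possess it, and without it existence, uniqueness and equivalence to Lebesgue can genuinely fail. This is why the paper takes $J=[a,b]$ with $a=[1/3]_{\varphi}$, $b=[2/3]_{\varphi}$ a $\varphi$-periodic orbit of period two: then $J$ is a simultaneous fundamental domain on both sides of $0$, every continuity interval of the return map $\Phi$ is mapped \emph{onto} $J$, and the folklore theorem applies directly. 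The repair inside your scheme is cheap: by monotonicity of visit counts it suffices to prove your estimate for this single $J$, and then, for an arbitrary $\varepsilon$-neighborhood of $0$, use (as the paper does) that every point outside $U_{\varepsilon}(0)$ enters $J$ within a bounded number $N_{\varepsilon}$ of iterates, so the number of visits outside $U_{\varepsilon}(0)$ up to time $n$ is at most $N_{\varepsilon}\,(m(n,x)+1)$, where $m(n,x)$ is the number of returns to $J$. With that substitution your argument and the paper's coincide.
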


Using Lemma~\ref{l:varphi}, classical arguments from Ergodic Theory show that the
Lyapunov exponent of the map~$\varphi$ is a.e. equal to zero. Indeed, since
for every point $x \in S^1$ and every $n\in\mathbb{N}$ one has
$$\frac{1}{n} \log (\varphi^n)'(x) = \frac{\log \varphi'(x) +
\log \varphi' (\varphi(x)) + \dots + \log \varphi' (\varphi^{n-1}(x))}{n}
= \int_{S^1} \!\!\! \log \varphi'(s) \, d\mu_{n,x}(s),$$
and since for a.e. $x\in S^1$ one has $\mu_{n,x}\xrightarrow[n\to\infty]{*-weakly} \delta_0$,
one concludes that, for a.e. $x\in S^1$,
$$\frac{1}{n} \log (\varphi^n)'(x) =
\int_{S^1} \log \varphi'(s) \, d\mu_{n,x}(s) \xrightarrow[n\to\infty]{}
\int_{S^1} \log \varphi'(s) \,d\delta_0 = \log \varphi'(0)=0.$$
Therefore, the Lyapunov exponent of the map~$\varphi$ is a.e. equal to zero, and this implies that
the same holds for the action of $[T]_{\varphi}$, thus concluding the proof of Theorem~\ref{Thompson}.
\end{proof}

We now give the sketch of the proof of Lemma~\ref{l:varphi}
since the ideas will be very useful in the next section. First recall that, for
every uniformly expanding, smooth circle map, there exists an absolutely continuous
ergodic invariant 
measure whose density is strictly positive and away from
zero; moreover, the same holds for maps of the interval having infinitely many branches,
provided that there is a uniform bound for the distortion norm and the expansiveness of all
of the branches (see for instance \cite{mane}). However, the situation which is considered
in the lemma is slightly different: although there are only finitely many branches,
due to the presence of a parabolic fixed point the map in non-uniformly expanding.

Nevertheless, the neutral fixed point can be somehow ``removed'' in
the following way. For each point $c$, denote by $[c]_{\varphi}$ the
preimage of $c$ under the (topological) conjugacy between $\varphi$
and $\varphi_0$. Since $[1/3]_{\varphi}$ is a $\varphi$-periodic
point of period two, the interval $J:=[a,b]$, where
$a:=[1/3]_{\varphi}$ and $b:=\varphi(a)=[2/3]_{\varphi}$, is a
``fundamental domain'' for the expansion both on the left and 
on the right of the neutral fixed point. Indeed, the restriction of
$\varphi$ to $\left[0, a\right]$ (resp. to $\left[b,1\right]$) is
one to one and onto $\left[0, b\right]$  (resp. $\left[a,1\right]$):
see Figure 2.

\begin{figure}[!h]
    \begin{center}
        \includegraphics{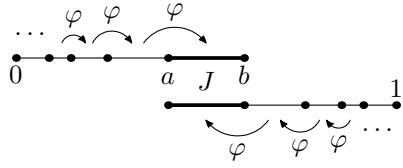}\label{DKN2}
    \end{center}
    \caption{A simultaneous fundamental domain}
\end{figure}

Consider the first-return map
$\Phi\!: J\to J$, as well as the return-time function $\tau\!:J\to\bbN$, which are given by
$$\Phi(x):=\varphi^{\tau(x)}(x), \quad \tau (x):=\min\{n\ge 1 \mid \varphi^n(x)\in J \}.$$

\begin{figure}[!h]
    \begin{center}
\includegraphics{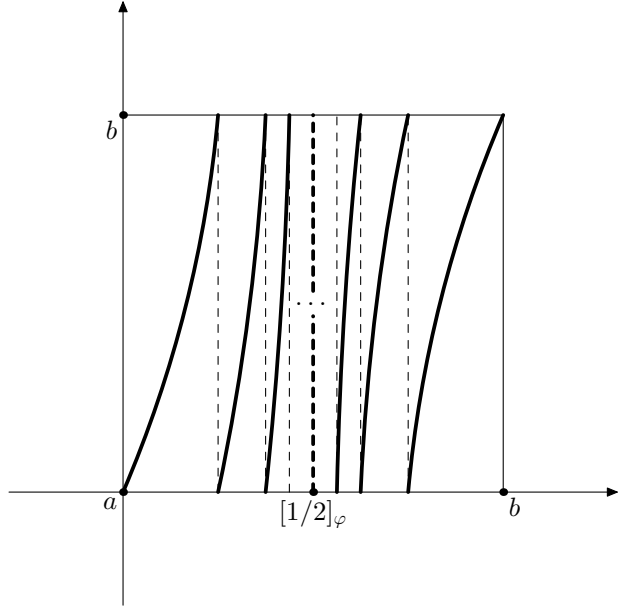}
    \end{center}
    \caption{The first-return map~$\Phi$}
\end{figure}

The map $\Phi$ is in fact an infinite-degree map with infinitely 
many discontinuity points. However, every maximal interval of
continuity $I$ of $\Phi$ is mapped onto~$J$, and the distortion 
and the expansiveness of all of the restrictions $\Phi_{I}$ are 
uniformly bounded. More precisely, since the images of $I$ under the 
maps~$\id,\varphi,\varphi^2,\dots,\varphi^{\tau(x)-1}$ are pairwise
disjoint, and hence the sum of their lengths does not exceed the
total length of the circle, the estimates from Section~\ref{s:distortion} 
provide a bound for the distortion norm of $\Phi_I$ which is independent 
of $I$; moreover, the new map $\Phi$ is strictly and uniformly expanding. 
(Compare Lemma \ref{l:fixed-exp}.) Together with what precedes, this allows 
to ensure the existence of an absolutely continuous ergodic invariant
measure~$\nu$ for $\Phi$ with a strictly positive density.

Consider now the sequence of iterates by the map~$\varphi$ of a Lebesgue generic point 
$x\in S^1$. Up to a finite number of initial steps, we can suppose that the point~$x$ 
belongs to the interval~$J$, and then its orbit can be divided into segments 
according to the arrivals to~$J$:
\begin{multline*}
x,\varphi(x),\dots,\varphi^{\tau(x)-1}(x); \Phi(x),\varphi(\Phi(x)), \dots,
\varphi^{\tau(\Phi(x))-1}(\Phi(x)); \dots ;
\\
\Phi^m(x),\varphi(\Phi^m(x)), \dots, \varphi^{\tau(\Phi^m(x))-1}(\Phi^m (x)); \dots
\end{multline*}
On the one hand, since the measure $\nu$ is absolutely continuous and has positive density, for
a Lebesgue generic point $x$ the sequence $x,\Phi(x),\Phi^2(x),\dots $ is distributed with
respect to~$\nu$. On the other hand, the return-time function $\tau$ has a non locally
integrable ``singularity'' (of type $1/x$) at the point~$x=[1/2]_{\varphi}$. Hence, due to
Birkhoff Ergodic Theorem, for a.e. $x\in J$ one has
$$\frac{\tau(x)+\tau(\Phi(x))+\dots+\tau(\Phi^{m-1}(x))}{m} 
\xrightarrow{\hspace{0.5cm}} +\infty \quad \text{ as  } m\to\infty,$$
and therefore
$$\frac{m}{\tau(x)+\tau(\Phi(x))+\dots+\tau(\Phi^{m-1}(x))} 
\xrightarrow{\hspace{0.5cm}} 0 \quad \text{ as  } m \to \infty.$$

Now for every fixed $\varepsilon > 0$ the points in $S^1 \setminus U_{\varepsilon}(0)$ fall into
$J$ in a bounded number of iterations. More precisely, there exists a constant $N=N_{\varepsilon}$
such that for every $x \notin U_{\eps}(0)$ one has $\varphi^j (x) \in J$ for some $j < N$. Hence,
for each $x \in J$, the time spent by a segment of
$\varphi$-orbit of length~$n$ outside $U_{\varepsilon}(0)$
is comparable to the number of returns to~$J$:
$$\# \{0 \le j \le n-1 \mid \varphi^j (x)\notin U_{\varepsilon}(0) \}
\le N \hspace{0.02cm} (m(n,x) + 1),$$
where
$$
m(n,x):=\max\{m\mid \tau(x) + \dots + \tau(\Phi^{m-1}(x)) \le n -1 \}.
$$
This implies that
\begin{equation}\label{eq:near-0}
\frac{\# \{0\le j \le n-1 \mid \varphi^j(x)\notin U_{\varepsilon}(0) \}}{n} \le
\frac{N \hspace{0.02cm} (m(n,x) + 1)}{\tau(x)+\tau(\Phi(x))+\dots + \tau(\Phi^{m(n,x)-1}(x))}
\xrightarrow[n\to\infty]{} 0.
\end{equation}
Thus, the proportion of time spent outside $U_{\varepsilon}(0)$ tends to~$0$ for a.e. $x\in J$,
and hence for a.e. $x\in S^1$. As $\varepsilon>0$ was arbitrary, (up to some technical details)
this concludes the proof of Lemma~\ref{l:varphi}.

\vspace{0.15cm} 

We close this section by giving an explicit construction for Example~\ref{ex:GS}.
Let us consider the Ghys-Sergiescu's non-minimal  
action of Thompson group $[T]_{\varphi}$, associated to 
a degree-two smooth circle map $\varphi$ 
with the following properties: 
\begin{itemize} 
\item It has exactly two fixed points $x_-$ and $x_+$. 
\item It is tangent to the identity at $x_-$ and $x_+$. 
\item Outside the invariant interval $[x_-,x_+]$, one has $\varphi'>1$.
\end{itemize} 
The latter property guarantees that when we shrink the components of the preimages 
of $I=(x_-,x_+)$ by the powers of $\varphi$, 
the induced map becomes topologically conjugate to~$\varphi_0$. 
This implies that the complement $\Lambda$ of the union of the preimages of $I$ 
is an exceptional minimal set for $[T]_{\varphi}$. 

For each preimage $y$ 
of $x_+$ by a power $\varphi^n$ of $\varphi$, let $I_y$ be the component of 
$\varphi^{-n}(I)$ containing the point $y$. 
By construction, all these intervals 
are disjoint. By the distortion arguments developped 
in Section~\ref{s:distortion}, 
there exists a constant $C>0$ depending only on~$\varphi$, such that $(\varphi^n )'(y) \geq \frac{C} {|I_y|}$.
Thus, the series 
\[ S= \sum _{(n,y) : \, \varphi^{n} (y) = x_+, \varphi^{n-1}(y)\neq x_+} \frac{1}{(\varphi^n)'(y)} \]
converges, and hence, for every $\delta\ge 1$, the value of the sum
\[ S_{\delta}:= \sum _{(n,y) : \, \varphi^{n} (y) = x_+, \varphi^{n-1}(y)\neq x_+} \frac{1}{[(\varphi^n)'(y)]^{\delta}} \]
is finite. Therefore, the 
measure 
\[\mu_{\delta} := \frac{1}{S_{\delta}} \sum _{n\in {\bf N},\ \varphi^{n} (y) = x_+\neq \varphi^{n-1}(y)} 
\frac{{\Dirac}_y}{[(\varphi^n)'(y)]^{\delta}} \]
is a $\delta$-conformal measure for $[T]_{\varphi}$
supported on the orbit of the point~$x_+$. 


\subsection{The case of $\PSL_2(\bbZ)$}\label{ss:PSL-2-Z}

To deal with the (canonical) action of $\PSL_2(\bbZ)$ on $S^1=\bbP(\bbR^2)$, we pass to an affine
chart on $\bbP(\bbR^2)=\bbR\cup\{\infty\}$ using the coordinate $\theta \mapsto \ctg(\theta)$.
Then the minimality follows easily from the density of $\bbQ$ in $\bbR$: every orbit accumulates
to the infinity ({\em i.e.}, the point $(1:0)$), and $G(\infty)=\bbQ \cup \{\infty\}$.


To show that the point $(1:0)$ is non-expandable first notice that, in the coordinate above,
an element $F=\left[\left(\begin{smallmatrix} a& b\\ c& d\end{smallmatrix}\right)\right]$
in $\PSL_2(\bbR)$ is given by
$$x\stackrel{\tilde{F}}{\mapsto} \frac{ax+b}{cx+d},$$
and thus its derivative at the point $x$ is
$$\widetilde{F}'(x)=\frac{ad-bc}{(cx+d)^2} = \frac{1}{(cx+d)^2}.$$
For $x=0$ this gives $\widetilde{F}'(0)=1/d^2$. Now, coming back to the original
coordinate~$\theta$, we have $\ctg'(\pi/2)=1$ and $\arcctg'(x)=1/(1+x^2)$;
therefore, if $d \neq 0$,
\begin{equation}\label{eq:derivative}
F'(0)=1\cdot\frac{1}{d^2}\cdot\frac{1}{1+(b/d)^2} =
\frac{1}{b^2+d^2}.
\end{equation}
By continuity, the same formula holds when $d = 0$. If $F$ belongs to
$\PSL_2(\bbZ)$ then $b,d$ are in $\bbZ$ and cannot be both equal to $0$.
Hence, the equality~\eqref{eq:derivative} shows that $F'(0) \leq 1$. A similar argument
shows that the point $(0:1)$ is also non-expandable.

By pursuing slightly the above computations, one easily checks that
the derivative of the map $F$ at a point $\theta\in S^1$ is equal to
\begin{equation}
F'(\theta)=\frac{\|(u,v)\|^2}{\|F(u,v)\|^2},
\label{deriv}
\end{equation}
where $(u,v)$ is any nonzero vector in the direction given by the angle~$\theta$.
This formula will strongly simplify the proof of the nullity of the Lyapunov expansion
exponent. For this, instead of working directly with $\PSL_2(\bbZ)$, we will work
with the subgroup $G_2$ which is the kernel of the natural map
$\PSL_2(\bbZ)\to SL_2(\bbZ/2\bbZ)$. Since $G_2$ is of finite index in
$\PSL_2(\bbZ)$, the corresponding actions have zero or positive
Lyapunov expansion exponents simultaneously.

It is well-known that $G_2$ is a
free group, and that one system of generators is given by
$f_1=\left[\left(\begin{smallmatrix} 1&2\\ 0&1\end{smallmatrix}\right)\right]$ and
$f_2=\left[\left(\begin{smallmatrix} 1&0\\ 2&1\end{smallmatrix}\right)\right]$.
One way to show this is by applying the Ping-Pong
Lemma (see {\em e.g}.~\cite{Ghys-actions}) to the sets
$$
A_+=\{\theta\in [0,\pi/4]\}, \quad B_+=\{\theta\in [\pi/4,\pi/2]\},
$$
$$
B_-=\{\theta\in [\pi/2,3\pi/4]\}, \quad A_-=\{\theta\in [3\pi/4,\pi]\}.
$$
(Notice that under the identification $S^1=\mathbb{P}(\bbR^2)$, the angle $\theta$
is measured modulo~$\pi$, and not modulo~$2\pi$, as usually.) Indeed, one has
$$f_1^{-1}(A_+) = A_+ \cup B_- \cup B_+, \quad f_1(A_-) = A_- \cup B_- \cup B_+,$$
$$f_2^{-1}(B_+) = B_+ \cup A_- \cup A_+, \quad f_2(B_-) = B_- \cup A_- \cup A_+.$$

We will denote by $\mcF = \{f_1,f_1^{-1},f_2,f_2^{-1}\}$ the finite set of elements
generating $G_2$ as a semigroup. Notice that for the action of (the representatives
of) these elements on a vector $(u,v)$, one has the following possibilities:
\begin{enumerate}
\item\label{i:generic} If $|u|\neq |v|$, $|u| \neq 0$, and $|v| \neq 0$,
then there is a unique element in $\mcF$ which decreases the norm
of~$(u,v)$, while the other generators strictly increase it.
\item\label{i:axis} If $|u|=0$, then $f_2^{\pm 1}$ preserve the norm of~$(u,v)$, while $f_1^{\pm 1}$ increase it.
\item[\ref{i:axis}')] If $|v|=0$, then $f_1^{\pm 1}$ preserve the norm of~$(u,v)$, while $f_2^{\pm 1}$ increase it.
\item\label{i:diagonals} If $u=v$, then $f_1^{-1}$ and $f_2^{-1}$ preserve the norm of~$(u,v)$, while
$f_1$ and $f_2$ increase it.
\item[\ref{i:diagonals}')] If $u=-v$, then $f_1$ and $f_2$ preserve
the norm of~$(u,v)$, while $f_1^{-1}$ and $f_2^{-1}$ increase it.
\end{enumerate}
Using~(\ref{deriv}), one may translate all of this to the original
action on the circle, thus showing that for any point~$\theta$
one (and only one) of the following two possibilities occurs:
\begin{itemize}
\item One of the four maps $f_1,f_1^{-1},f_2,f_2^{-1},$ has derivative greater than~$1$
at~$\theta$, while the other three maps have derivative strictly smaller than~$1$ at this point.
\item Two of these maps have derivative equal to~$1$ at~$\theta$, while the other two have
derivative smaller than~$1$ at the same point.
\end{itemize}

From the first remark above and relation~(\ref{deriv}) we deduce that every
point $(u:v)$ which is different from $(0:1)$, $(1:0)$, $(1:1)$, and $(-1:1)$, is
expandable by some element of $G_2$ (and thus of $\PSL_2(\bbZ)$). The latter two
points are expanded by elements in \hspace{0.01cm}
$\PSL_2(\bbZ) \setminus G_2$, \hspace{0.01cm} for instance,
$f=\left[\left(\begin{smallmatrix} 1&-1\\ 0&1\end{smallmatrix}\right)\right]$ and
$g=\left[\left(\begin{smallmatrix} 1&1\\ 0&1\end{smallmatrix}\right)\right]$,
respectively. Since we have already seen that the former points are non-expandable,
this shows that the $\NE$-set for $\PSL_2(\bbZ)$ is reduced to $\{(0:1),(1:0)\}$.

Now notice that the remarks above also show that, among the compositions of length
smaller than or equal to~$n$, the one that expands the most at a generic\footnote{Here,
{\em generic} just means not contained in the orbit by $\PSL_2(\bbZ)$ of $(1:0)$, or
equivalently, the set of $\theta$ for which $\tan(\theta)$ is irrational.} point~$\theta$
can be found by a ``greedy'' algorithm: apply always the generator which expands at the
point obtained after the previous composition.

\begin{lemma}
Given $N \in \mathbb{N}$ and a generic point $\theta$, let $f_{i_1},f_{i_2},\dots,f_{i_n}$
be a finite sequence of elements in $\mcF$ such that $n \leq N$ and such that the 
value of the derivative at the point $\theta$ of the composition 
$f_{i_n} \circ \dots \circ f_{i_2} \circ f_{i_1}$ is maximal among the compositions 
of length smaller than or equal to $N$. Then $n=N$, and the 
composition is obtained by the ``greedy'' algorithm described above.
\end{lemma}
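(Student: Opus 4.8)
The plan is to establish two things: first, that the optimal expanding composition uses its full allowance of $N$ generators (i.e. $n=N$), and second, that the unique such composition is produced by always applying, at the current point, the single generator that expands there. Both facts should follow from the careful dichotomy already established for the action of $G_2$ on a generic point: among the four generators $f_1,f_1^{-1},f_2,f_2^{-1}$, exactly one expands (derivative $>1$) at a generic point, while the other three strictly contract (derivative $<1$). Genericity is preserved under the action, since the orbit of a generic point consists of generic points, so at every stage of any composition the same trichotomy applies.

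First I would set up the greedy composition explicitly. Starting from $\theta$, let $h_1\in\mcF$ be the unique generator with $h_1'(\theta)>1$, set $\theta_1=h_1(\theta)$, and inductively let $h_{j+1}$ be the unique generator expanding at $\theta_j$, with $\theta_{j+1}=h_{j+1}(\theta_j)$. The key point, which I would verify, is that the greedy algorithm never backtracks: the expanding generator at $\theta_j$ is never the inverse of $h_j$. This is because applying $h_j$ moved the point into a region (in terms of the Ping-Pong sets $A_\pm,B_\pm$) from which $h_j^{-1}$ contracts; concretely, using formula~(\ref{deriv}), $h_j$ strictly increased the norm of the representative vector, so $h_j^{-1}$ would strictly decrease it and hence cannot be the expanding generator at $\theta_j$. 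Thus no cancellation occurs and the greedy word is reduced of length exactly $N$.

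Next I would prove optimality by a direct comparison argument, using the multiplicativity of the derivative along compositions together with the chain rule. For any admissible word $f_{i_n}\circ\cdots\circ f_{i_1}$ with $n\le N$, I would compare it step by step against the greedy word. The cleanest formulation is to bound, at each intermediate point, the derivative of the remaining composition: since at every point exactly one generator expands and the rest contract, replacing the chosen generator at any step by the expanding one (and keeping the point genericity) can only increase the product of derivatives. Formally one argues by reverse induction on the number of steps, showing that prepending the locally optimal generator dominates any other choice; using a word of length $<N$ forgoes a factor that the greedy algorithm would collect (each additional greedy step contributes a factor $>1$), so shorter words are strictly beaten, forcing $n=N$.

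The main obstacle will be handling the strictness and uniqueness cleanly, rather than the inequality itself. The trichotomy gives strict expansion/contraction only at generic points, so I must confirm that genericity propagates along every branch of the comparison (it does, because $G_2$ acts on the generic set) and that the ``degenerate'' points $(1:1)$ and $(-1:1)$, where two generators have derivative exactly $1$, never arise in the orbit of a generic $\theta$. Establishing that the maximizer is \emph{unique}—and therefore equals the greedy word—requires that at each step the locally expanding generator be strictly better than every alternative continuation, which is where the strict inequalities at generic points are essential. I expect the comparison between a non-greedy choice and the greedy one to be the delicate bookkeeping step: one must show that a single ``wrong'' generator cannot be compensated by later choices, which again reduces to the fact that every generator other than the locally expanding one strictly shrinks the representative norm, so any deviation incurs a strict loss that cannot be recovered.
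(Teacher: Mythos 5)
Your proposal assembles the right ingredients (the exactly-one-expanding-generator dichotomy at generic points, preservation of genericity along orbits, formula~(\ref{deriv}), reduction to words without cancellation), but the central optimality step is not proved. The claim that ``replacing the chosen generator at any step by the expanding one \ldots can only increase the product of derivatives'' is a non sequitur: once you swap a letter at step $k$, every subsequent factor in the chain rule is evaluated at a \emph{different} point, so there is no term-by-term comparison between the two products, and local optimality of each letter does not by itself imply global optimality of the word. Your closing sentence --- that ``any deviation incurs a strict loss that cannot be recovered'' --- is exactly the statement that needs proof, and nothing in the proposal establishes it; the no-backtracking observation you do prove concerns the greedy word itself, not an arbitrary competing word. (There is also a sign slip: by formula~(\ref{deriv}), a generator expands at $\theta$ precisely when it \emph{decreases} the norm of the representative vector, not increases it; this is harmless but should be fixed.)

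The missing idea, which is how the paper argues, is a truncation argument based on propagation of contraction along irreducible words. Assume the maximizing word is irreducible (cancelling an adjacent pair $f^{-1}\circ f$ shortens the word without changing the map, so this is no loss). If some letter $f_{i_k}$ contracts at $\theta_{k-1}$, then $(f_{i_k}^{-1})'(\theta_k)>1$, so $f_{i_k}^{-1}$ is \emph{the} unique expanding generator at $\theta_k$; irreducibility forbids $f_{i_{k+1}}=f_{i_k}^{-1}$, hence $f_{i_{k+1}}$ also contracts at $\theta_k$, and by induction every later letter contracts. Thus the whole tail $f_{i_n}\circ\dots\circ f_{i_{k+1}}\circ f_{i_k}$ has derivative $<1$ at $\theta_{k-1}$, so deleting it strictly increases the derivative at $\theta$ --- contradicting maximality. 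Therefore every letter of the maximizer expands at the corresponding point, which is precisely the greedy word; and $n=N$, since otherwise appending the expanding generator at $\theta_n$ would produce a longer admissible word with strictly larger derivative. If you prefer to keep your reverse-induction framework, it can be repaired with the same key fact: after a ``wrong'' first letter $f$, the unique expanding generator at $f(\theta)$ is $f^{-1}$, so the optimal continuation from $f(\theta)$ must backtrack, and comparing greedy-for-$j$ steps against greedy-for-$(j-2)$ steps gives the strict inequality. But this propagation fact must be stated and used; it is what the whole lemma hinges on.
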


\begin{proof}
We may assume that the composition $f_{i_n} \circ \dots \circ f_{i_2} \circ f_{i_1}$ 
is irreducible, that is, no generator is applied immediately after its inverse. 
Let us denote by $\theta_k$ the image of $\theta$ under the partial composition 
$f_{i_k} \circ \cdots \circ f_{i_2} \circ f_{i_1}$. Notice that if for some $k$ 
the generator which is applied at time $k$ was contracting at the corresponding point
$\theta_{k-1}$ (that is, if $f_{i_k}'(\theta_{k-1})<1$), then the inverse of this generator
would be expanding at the image point $\theta_k$ ({\em i.e.}, $(f_{i_k}^{-1})'(\theta_k) > 1$).
Since for each generic point there is only one generator having derivative greater than one,
and since $f_{i_{k+1}} \neq f_{i_k}^{-1}$, this would imply that $f_{i_{k+1}}'(\theta_k)<1$.
Repeating this argument several times, this would allow us to conclude that, for all
$j\ge k$, one has $f_{i_j}'(\theta_{j-1})<1$. This clearly implies that all the
``tail'' $f_{i_n}\circ\dots f_{i_{k+1}}$ contracts at $\theta_{k}$, and hence
if it is omitted this increases the derivative at the point~$\theta$:
$$(f_{i_n}\circ\dots\circ f_{i_2}\circ f_{i_1})' (\theta)
< (f_{i_{k-1}}\circ\dots\circ f_{i_2}\circ f_{i_1})' (\theta).$$
However, this is in contradiction with our choice 
of the sequence $f_{i_1},f_{i_2},\dots,f_{i_n}$.

Therefore, at each time $k$ the generator which is applied is expanding at the point
$\theta_{k-1}$. In other words, the sequence coincides with the one provided by
the ``greedy'' algorithm.
\end{proof}

\begin{figure}[!h]
    \begin{center}
        \includegraphics{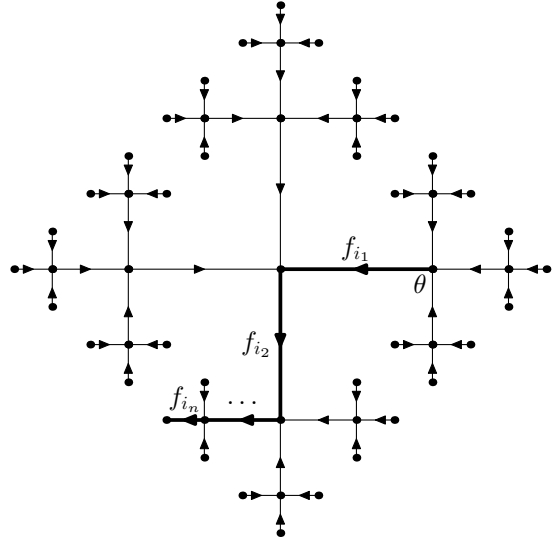}
    \end{center}
    \caption{The Schreier graph of an orbit with the arrows showing the expanding direction}
\end{figure}

The ``greedy'' algorithm reduces the study of the Lyapunov expansion exponent
of~$G_2$ to the study of the Lyapunov exponent of a deterministic dynamics,
namely the one given by applying the map $f_1^{-1}$ on $A_+$, the map $f_1$
on $A_-$, the map $f_2^{-1}$ on $B_+$, and the map $f_2$ on $B_-$. To deal
with this dynamics, let us consider the map $S\!: S^1 \to [0,1]$ obtained
as the ``union'' of the affine charts on $A_{\pm}$, $B_{\pm}$, that is,
$$
S(\theta)=
\begin{cases}
|\tan (\theta)|, & \theta\in A_{-} \cup A_+ = [-\pi/4,\pi/4],\\
|\ctg (\theta)|, & \theta\in B_{-} \cup B_+ = [\pi/4,3\pi/4].
\end{cases}
$$
Since both $S$ and the set $\mcF$ are invariant under conjugacies by the elements 
in the finite group $\mathcal{H} = \{\id, x \mapsto 1/x, x \mapsto -x, x \mapsto -1/x\}$ 
(written in the affine chart $(1:x)$), this dynamics descends to the quotient 
$S^1 / \mathcal{H} = [0,1]$. Actually, a straightforward computation shows 
the following

\begin{proposition}
Given a generic point $\theta \in S^1$, let $f$ be the ``expanding generator''
at this point, that is, the element $f \in \mcF$ such that $f'(\theta)>1$.
Then $S(f(\theta))=\widetilde{\varphi}(S(\theta))$, where
\begin{equation}\label{eq:fractions-def}
\widetilde{\varphi}(x)=\begin{cases}
\frac{1}{\frac{1}{x}-2}, & x\in [0,1/3],\\
\frac{1}{x}-2, & x\in [1/3,1/2],\\
2-\frac{1}{x}, & x\in [1/2,1].
\end{cases}
\end{equation}
\label{prop-cases}
\end{proposition}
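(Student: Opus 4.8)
The plan is to verify the formula by a direct computation in the affine coordinate $x=\ctg(\theta)$, after using the $\mathcal{H}$-symmetry to reduce to a single region. Since it has already been noted that both $S$ and the generating set $\mcF$ are invariant under conjugacy by the elements of $\mathcal{H}=\{\id,\,x\mapsto 1/x,\,x\mapsto -x,\,x\mapsto -1/x\}$, the expanding dynamics $\theta\mapsto f(\theta)$ descends to the quotient $S^1/\mathcal{H}=[0,1]$. As the four regions $A_\pm,B_\pm$ form a single $\mathcal{H}$-orbit, with $A_+=[0,\pi/4]$ a fundamental domain, it suffices to establish the identity $S(f(\theta))=\widetilde{\varphi}(S(\theta))$ for generic $\theta\in A_+$. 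On this region the expanding generator is $f=f_1^{-1}$, which in the coordinate $x=\ctg(\theta)$ acts as the translation $x\mapsto x-2$.

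First I would record the coordinates on $A_+$. For $\theta\in[0,\pi/4]$ one has $x=\ctg(\theta)\in[1,+\infty]$ and $s:=S(\theta)=|\tan\theta|=1/x$, so that $x=1/s$ with $s\in[0,1]$; genericity guarantees that $s$ is irrational, hence that $\theta$ lies strictly inside the region and the expanding generator is unique. Writing $\theta':=f_1^{-1}(\theta)$, the image satisfies $\ctg(\theta')=x-2=\tfrac{1}{s}-2$. The region containing $\theta'$ is then read off from the value of $\tfrac{1}{s}-2$, using that $A_+$, $B_+$, $B_-$ correspond respectively to $\ctg\in[1,+\infty]$, $\ctg\in[0,1]$, and $\ctg\in[-1,0]$.

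This splits into three subcases. If $s\in[0,1/3]$, then $\tfrac1s-2\ge 1$, so $\theta'\in A_+$ and $S(\theta')=\tan\theta'=\tfrac{1}{\frac1s-2}$. If $s\in[1/3,1/2]$, then $\tfrac1s-2\in[0,1]$, so $\theta'\in B_+$ and $S(\theta')=|\ctg\theta'|=\tfrac1s-2$. Finally, if $s\in[1/2,1]$, then $\tfrac1s-2\in[-1,0]$, so $\theta'\in B_-$ and $S(\theta')=|\ctg\theta'|=2-\tfrac1s$. These three expressions are exactly the three branches of $\widetilde{\varphi}$ in~\eqref{eq:fractions-def}, which proves the identity on $A_+$; the $\mathcal{H}$-symmetry reduction then yields it for every generic $\theta$.

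I do not expect a genuine obstacle here: the statement is a finite computation once the right coordinate is fixed. The only points requiring care are the piecewise nature of $S$ (which uses $\tan$ on the $A$-regions and $\ctg$ on the $B$-regions, together with absolute values) and the correct determination of the region into which the image $\theta'$ falls. The whole conceptual content lies in the $\mathcal{H}$-symmetry reduction, which collapses the a priori four region-cases, together with the sign bookkeeping coming from the absolute values, into the single computation carried out above.
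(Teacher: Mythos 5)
Your proof is correct and takes essentially the same approach the paper intends: the paper states this proposition as ``a straightforward computation'' right after setting up the $\mathcal{H}$-symmetry and the affine chart $x=\ctg(\theta)$, and your argument is exactly that computation, namely the reduction to the fundamental domain $A_+$ (where the expanding generator $f_1^{-1}$ acts as $x\mapsto x-2$) followed by the three-case bookkeeping matching the branches of $\widetilde{\varphi}$. There are no gaps.
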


In other words, after projecting into the quotient $S^1/ \mathcal{H} = [0,1]$, the 
dynamics of the ``greedy algorithm'' becomes the dynamics of the non-uniformly 
expanding map~$\widetilde{\varphi}$. This map has two neutral fixed 
points (namely $0$ and~$1$), and in analogy to Lemma~\ref{l:varphi} one can
state the following lemma for which we postpone the proof.

\begin{lemma}\label{l:fractions}
For Lebesgue-a.e. point $x\in[0,1]$, the time averages concentrate on
the set $\{0,1\}$. More precisely, for every $\varepsilon>0$ we have
$$
\frac{1}{n} \# \left\{0\le j \le n-1 \mid \widetilde{\varphi}^j(x)\in
U_{\varepsilon}(0) \cup U_{\varepsilon}(1) \right\} \xrightarrow[n\to\infty]{} 1.
$$
\end{lemma}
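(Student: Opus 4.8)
The plan is to mimic the proof of Lemma~\ref{l:varphi} as closely as possible, since $\widetilde{\varphi}$ is exactly the same kind of object: a piecewise-smooth, degree-two-type expanding map with two neutral (parabolic) fixed points, here located at $0$ and $1$ rather than a single point at $0$. The key observation is that the two fixed points are symmetric, so I would first record that $\widetilde{\varphi}$ is expanding away from $\{0,1\}$ and that near each of these points it is tangent to the identity (one checks from~\eqref{eq:fractions-def} that $\widetilde{\varphi}(x)\to 0$ as $x\to 0^+$ and $\widetilde{\varphi}(x)\to 1$ as $x\to 1^-$, with derivative tending to $1$). Thus the dynamics is non-uniformly expanding, with the two neutral points being the only obstructions to uniform expansion.

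First I would choose a ``fundamental domain'' $J$ bounded away from both $0$ and $1$ and introduce the first-return map $\Phi\!:J\to J$ together with its return-time function $\tau$, exactly as in the sketch following Lemma~\ref{l:varphi}. The point is that, because the successive iterates $x,\widetilde{\varphi}(x),\dots,\widetilde{\varphi}^{\tau(x)-1}(x)$ of a point before its return to $J$ are pairwise disjoint intervals summing to at most the total length, the distortion estimates of Section~\ref{s:distortion} (Proposition~\ref{p:sum} and Corollary~\ref{cor:estimates}) give a uniform bound on the distortion norm of each branch $\Phi_I$, while the branches are uniformly expanding. This yields, by the standard theory for maps with infinitely many uniformly expanding branches and bounded distortion (as cited via~\cite{mane}), an absolutely continuous ergodic invariant measure $\nu$ for $\Phi$ with strictly positive density.

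Next I would run the Kac/Birkhoff argument: for a Lebesgue-generic $x$, the orbit $x,\Phi(x),\Phi^2(x),\dots$ equidistributes with respect to $\nu$, and the return time $\tau$ has non-integrable singularities of type $1/x$ near the parabolic points (one near each of $0$ and $1$), so that the Birkhoff averages $\frac1m\sum_{i<m}\tau(\Phi^i(x))\to+\infty$, whence $m/\sum_{i<m}\tau(\Phi^i(x))\to 0$. Finally, for each fixed $\varepsilon>0$, every point outside $U_\varepsilon(0)\cup U_\varepsilon(1)$ returns to $J$ within a bounded number $N=N_\varepsilon$ of iterations, so the number of indices $j<n$ with $\widetilde{\varphi}^j(x)\notin U_\varepsilon(0)\cup U_\varepsilon(1)$ is at most $N(m(n,x)+1)$, and dividing by $n$ and comparing with the sum of return times shows this proportion tends to $0$. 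Since $\varepsilon$ is arbitrary, the complementary proportion of time spent in $U_\varepsilon(0)\cup U_\varepsilon(1)$ tends to $1$, which is the assertion of the lemma.

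The one genuinely new feature compared with Lemma~\ref{l:varphi} is the presence of \emph{two} neutral fixed points instead of one, so the only point requiring care is that $\tau$ now has two singular branches rather than one; I expect this to be the main (though minor) obstacle, and it is handled simply by noting that either singularity already forces $\sum\tau\to\infty$, and that the ``fall into $J$ in bounded time'' estimate must be established uniformly on the complement of a neighborhood of \emph{both} points. Everything else is a verbatim transcription of the first-return construction, so I would not belabor the routine distortion and integrability computations.
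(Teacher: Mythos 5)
Your proposal is correct and follows the paper's own proof essentially step by step: a first-return map $\Phi$ to a simultaneous fundamental domain $J$, bounded distortion and uniform expansion of its branches yielding an absolutely continuous ergodic invariant measure via \cite{mane}, non-integrability of the return time $\tau$, and the same Birkhoff-plus-counting argument as in Lemma~\ref{l:varphi}. The only two points where the paper is more precise than you are: the fundamental domain is not merely ``chosen bounded away from $0$ and $1$'' but is constructed from a period-two orbit $\{a,b\}$ with $a\in(0,1/3)$ and $b=\widetilde{\varphi}(a)\in(1/2,1)$ (obtained by an intermediate-value and monotonicity argument for $\widetilde{\varphi}^2$ on the branch fixing~$0$), which is exactly what makes $J=[a,b]$ a fundamental domain near both parabolic points at once and gives the return map full (Markov) branches; and the non-integrable singularities of $\tau$ sit at the interior points $1/3$ and $1/2$ of $J$ (which $\widetilde{\varphi}$ sends to the parabolic points $1$ and $0$, respectively), not ``near $0$ and $1$'' themselves, though the mechanism you describe is the right one.
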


Since $\widetilde{\varphi}'(0)=\widetilde{\varphi}'(1)=1$, and since 
the function $|\log \widetilde{\varphi}'|$ is bounded on~$[0,1]$ and 
continuous near $0$ and $1$, the lemma above easily implies the following

\begin{corollary}
For Lebesgue-a.e. $x\in [0,1]$, the Lyapunov exponent 
of $\widetilde{\varphi}$ at $x$ is equal to zero.
\end{corollary}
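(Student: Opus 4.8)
The plan is to run the same Ergodic-Theory computation used above to evaluate the Lyapunov exponent of $\varphi$, but now applied to $\widetilde{\varphi}$ and fed by Lemma~\ref{l:fractions} in place of Lemma~\ref{l:varphi}. First I would rewrite the exponent at $x$ as an integral against the empirical (time-average) measures. By the chain rule,
\[
\frac{1}{n}\log(\widetilde{\varphi}^n)'(x)
= \frac{1}{n}\sum_{j=0}^{n-1}\log\widetilde{\varphi}'(\widetilde{\varphi}^j(x))
= \int_{[0,1]}\log\widetilde{\varphi}'(s)\,d\mu_{n,x}(s),
\]
where $\mu_{n,x}:=\frac1n\sum_{j=0}^{n-1}\delta_{\widetilde{\varphi}^j(x)}$. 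Thus everything reduces to showing that, for Lebesgue-a.e.~$x$, this integral tends to zero.

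Second, I would exploit the two properties of the integrand recorded just before the statement: $\log\widetilde{\varphi}'$ is bounded on $[0,1]$, continuous near the neutral fixed points $0$ and $1$, and vanishes at both (since $\widetilde{\varphi}'(0)=\widetilde{\varphi}'(1)=1$). Fixing $\varepsilon>0$, I would split the integral over $U_{\varepsilon}(0)\cup U_{\varepsilon}(1)$ and over its complement. On the complement the integrand is bounded by $\max_{[0,1]}|\log\widetilde{\varphi}'|$, while the $\mu_{n,x}$-mass of this region is precisely the proportion of time the orbit spends away from $\{0,1\}$; by Lemma~\ref{l:fractions} this proportion tends to zero for a.e.~$x$, so this contribution vanishes in the limit. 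On $U_{\varepsilon}(0)\cup U_{\varepsilon}(1)$, continuity together with the vanishing at $0,1$ forces $|\log\widetilde{\varphi}'|$ to be as small as we wish once $\varepsilon$ is small, whence this part is bounded by a quantity $\varepsilon'(\varepsilon)\to 0$ uniformly in $n$. Letting $n\to\infty$ and then $\varepsilon\to 0$ yields the claim.

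The one conceptual point to be careful about — and the only place this differs from the earlier computation for $\varphi$ — is that here the empirical measures need \emph{not} converge. In the single-parabolic case the $\mu_{n,x}$ converged weakly to $\delta_0$, making the limit of the integral immediate; now, with two neutral fixed points, the orbit may redistribute its long sojourns between neighborhoods of $0$ and of $1$ erratically, so any weak limit is merely some convex combination $t\delta_0+(1-t)\delta_1$ with $t$ depending on the subsequence. The argument above sidesteps this: since $\log\widetilde{\varphi}'$ vanishes at \emph{both} fixed points, the integral against any such combination is zero irrespective of $t$, so the possible non-convergence of the $\mu_{n,x}$ is harmless. This is exactly why the corollary must be routed through the concentration statement of Lemma~\ref{l:fractions} rather than through weak convergence of the time averages. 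Beyond this observation there is no genuine obstacle: the substance of the corollary is already packaged in Lemma~\ref{l:fractions}, and what remains is the elementary splitting estimate sketched above.
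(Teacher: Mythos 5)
Your proof is correct and is essentially the argument the paper intends: the paper deduces the corollary from Lemma~\ref{l:fractions} by exactly the computation it carried out for $\varphi$ after Lemma~\ref{l:varphi} (writing $\frac{1}{n}\log|(\widetilde{\varphi}^n)'(x)|$ as an integral against the empirical measures $\mu_{n,x}$, then splitting into $U_{\varepsilon}(0)\cup U_{\varepsilon}(1)$ and its complement, using boundedness of the integrand, its continuity near the neutral points, and $\widetilde{\varphi}'(0)=\widetilde{\varphi}'(1)=1$). Your observation that possible non-convergence of the $\mu_{n,x}$ is harmless --- which is precisely why Lemma~\ref{l:fractions} is stated as a concentration estimate rather than as weak-$*$ convergence to a single limit --- is the right reading of the paper; the only cosmetic correction is to write $\log|\widetilde{\varphi}'|$ throughout, since the middle branch of $\widetilde{\varphi}$ is decreasing.
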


Now to complete the proof of Theorem~\ref{PSL2Z}, notice that the 
derivative of the map $S$ is bounded from above
and away from zero. Thus, by Proposition~\ref{prop-cases}, the 
Lyapunov expansion exponent of $G_2$ (and hence that 
of~$\PSL_2(\bbZ)$) is also equal to zero for Lebesgue-almost 
every point~$\theta$ on the circle.

\vspace{0.15cm}


Now, in the same spirit as that of~Lemma~\ref{l:varphi}, we provide the
\begin{proof}[Proof of Lemma~\ref{l:fractions}]
The first step consists in finding a periodic orbit of period two, say
$\{a,\widetilde{\varphi}(a)\} = \{a,b\}$, such that the interval $J:=[a,b]$ is at the same time 
a fundamental domain for the map in a neighborhood of~$0$ and in a neighborhood 
of~$1$. To do this, we consider the function $\wtp^2$ on the interval 
$(\wtp^{-1}(1/2),1/3)$, where the preimage is taken for the branch $\wtp|_{[0,1/3]}$. 
Then
$$
\wtp^2(\wtp^{-1}(1/2))=\wtp(1/2)=0, \quad \wtp^2(1/3)=\wtp(1)=1,
$$
and since the map $\wtp^2$ in increasing and expanding on
$(\wtp^{-1}(1/2),1/3)$, it has a unique fixed point~$a$ therein.

\begin{figure}[!h]
    \begin{center}
        \includegraphics{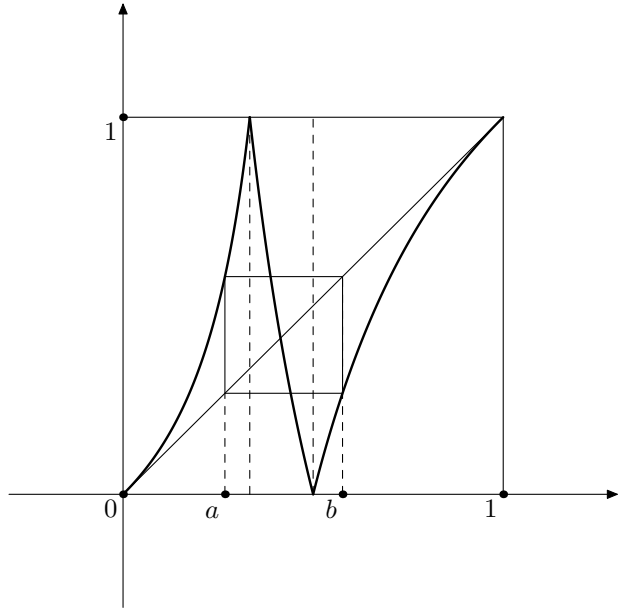}
    \end{center}
    \caption{The order-two periodic point}
\end{figure}

Notice that since $a\in(0,1/3)$ and $b=\wtp(a)\in (1/2,1)$, the
interval $(a,b)$ is simultaneously a fundamental domain for both
$\wtp|_{(0,1/3)}$ and $\wtp|_{(1/2,1)}$ (see Figure~\ref{f:dd2}).

\begin{figure}[!h]
    \begin{center}
        \includegraphics{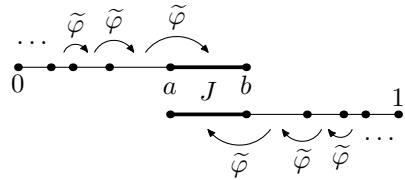}
    \end{center}
    \caption{A simultaneous fundamental domain}\label{f:dd2}
\end{figure}

Now consider the first-return map~$\Phi$ to $J$, as 
well as the return-time function~$\tau$, given by
$$
\Phi(x):=\widetilde{\varphi}^{\tau (x)}(x), \quad
\tau (x):=\min\{n \geq 1 \mid \widetilde{\varphi}^n(x)\in J\}.
$$
The map $\Phi$ can be described in the following way (see Figure~\ref{fig:return}):

\begin{itemize}
\item
The intervals $I_1$ and $I_2$ are mapped onto $[b,1]$, and then they return 
by the topologically repelling map 
$\wtp:[b,1)\to [a,1)$ to the fundamental domain~$J$. They 
are decomposed into infinitely many continuity intervals whose 
images by $\Phi$ coincide with the whole interval $J$.
\item The interval $I_3$ is mapped onto~$J$.
\item
The intervals $I_4$ and $I_5$ are mapped onto $[0,a]$, and then they 
return by the topologically repelling map $\wtp:(0,a]\to (0,b]$ to the fundamental 
domain~$J$. They are decomposed into infinitely many continuity intervals whose 
images by $\Phi$ coincide with the whole interval~$J$.
\end{itemize}

\begin{figure}[!h]
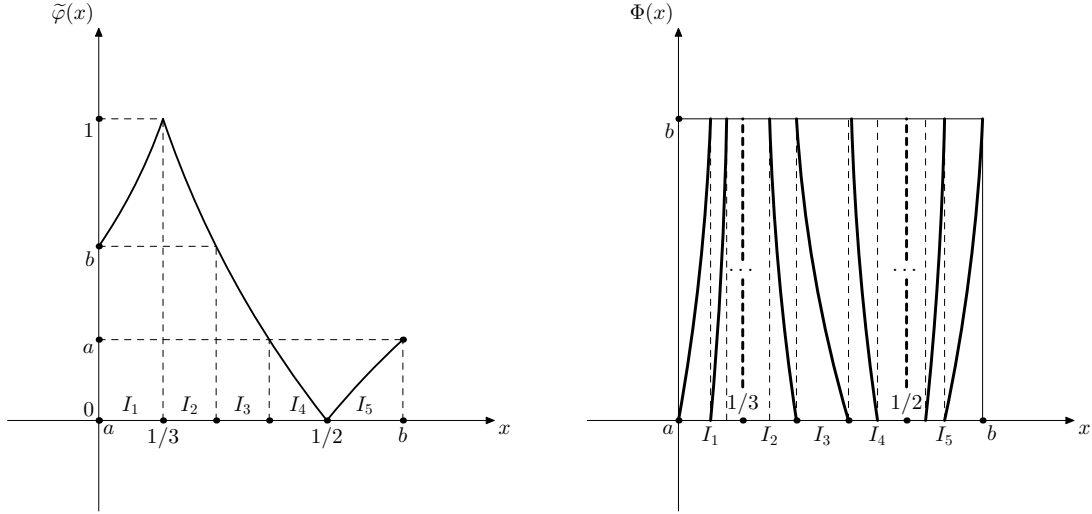

    \begin{center}
        \includegraphics[scale=0.8]{DKN3.3}
        \qquad
        \includegraphics[scale=0.8]{DKN5.2}
    \end{center}
    \caption{The first-return map $\Phi$}\label{fig:return}
\end{figure}

Since $\wtp$ is non-uniformly expanding on the whole interval~$[0,1]$,
and since it is uniformly expanding on~$J$, for all $x \in J$ we have
$$
|\Phi'(x)| \ge |\wtp'(x)| \ge \inf_{y\in J} |\wtp' (y)|>1.
$$
Thus, the map $\Phi$ is uniformly expanding. The following lemma 
provides us a necessary upper bound for the distortion norms 
of the branches of~$\Phi$. We give a more general version 
(which still holds for non-expanding maps) since the underlying 
(simple) idea will be relevant in the next section.

\begin{lemma}\label{l:fixed-exp}
Let $x_0$ be a fixed point of some $C^2$ diffeomorphism $f \!: [x_0,a]\to [x_0,b]$
such that $f(x)>x$ for all $x\in (x_0,a]$. Consider the first-entry map
$F \!: [x_0,a] \to [a,b]$ into the interval $J = [a,b]$, that is
$$
F(x):=f^{k(x)}(x), \quad k(x):= \min\{k \geq 1 \mid f^k (x)\in J \}.
$$
Let $J_k := f^{-k}(J)$ be the (infinitely many) continuity intervals of $F$, and
denote by $f_k$ the restriction of $F$ to $J_k$ (that is, $f_k := f^k|_{J_k}$).
Then the following hold:
\begin{enumerate}
\item There exists a uniform bound for the distortion norms of the maps $f_k$. 
\item Starting from some $k_0$, the maps $f_k$ become uniformly expanding, that is, 
there exists $\lambda>1$ such that, for all $k \ge k_0$ and all $x \in J_k$, one has 
$f_k'(x) \geq \lambda$. Moreover, at the cost of increasing $k_0$, one can take $\lambda=2$.
\end{enumerate}
\end{lemma}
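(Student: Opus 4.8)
The plan is to reduce everything to the \emph{disjointness of the successive images} of the continuity intervals, which is precisely the situation in which Proposition~\ref{p:sum} delivers uniform distortion control. First I would fix notation: writing $a_k := f^{-k}(a)$, the hypothesis $f(x)>x$ on $(x_0,a]$ forces $a_k$ to decrease monotonically to $x_0$ (any other limit would be a fixed point of $f$ inside $(x_0,a]$, which is excluded), so that $J_k = f^{-k}(J) = [a_k,a_{k-1}]$ and the intervals $\{J_k\}_{k\ge 1}$ partition $(x_0,a]$. In particular $\sum_{k\ge 1}|J_k| = a-x_0 < \infty$, whence $|J_k|\to 0$. The decisive observation is that $f^i(J_k) = J_{k-i}$, so the successive images of $J_k$ under $\id, f, \dots, f^{k-1}$ are exactly $J_k, J_{k-1},\dots, J_1$; being pairwise disjoint subintervals of $(x_0,a]$, the sum of their lengths never exceeds $a-x_0$, \emph{uniformly in} $k$.

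For the first assertion I would apply Proposition~\ref{p:sum} with $\mcF = \{f\}$ to the composition $f_k = f\circ\cdots\circ f$ ($k$ factors) on $J_k$. Since $\sum_{i=0}^{k-1}|f^i(J_k)| \le a-x_0$, this gives a distortion-coefficient bound $\varkappa(f_k;J_k)\le C_f(a-x_0)$ independent of $k$. To upgrade the coefficient to the distortion \emph{norm} $\eta(f_k;J_k)$, I would write $(f_k^{-1})' = (\psi^k)'$ with $\psi := f^{-1}$ and differentiate the telescoping identity $\log(\psi^k)'(z) = \sum_{i=0}^{k-1}\log\psi'(\psi^i(z))$, obtaining
\[
(\log(\psi^k)')'(z) = \sum_{i=0}^{k-1} (\log\psi')'(\psi^i(z))\,(\psi^i)'(z).
\]
Here $|(\log\psi')'|$ is bounded by a constant depending only on the $\Diff^2$-norm of $f$, while the coefficient bounds for the shorter maps $f^i|_{J_i}$ (which come from the same disjointness argument, $\varkappa$ being invariant under inversion) give $(\psi^i)'(z)\le e^{C_f(a-x_0)}\,|J_i|/|J|$; summing over $i$ and using that $\sum_i|J_i|$ is finite produces a bound on $\eta(f_k;J_k)$ independent of $k$, as desired.

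For the second assertion I would combine the uniform distortion with a volume argument. Since $f_k$ maps $J_k$ diffeomorphically onto $J$, the mean value theorem together with the coefficient bound gives, for every $x\in J_k$,
\[
f_k'(x)\ \ge\ e^{-\varkappa(f_k;J_k)}\,\frac{|J|}{|J_k|}\ \ge\ e^{-C_f(a-x_0)}\,\frac{|J|}{|J_k|}.
\]
Because $|J_k|\to 0$ while $|J| = b-a$ is fixed, the right-hand side tends to $+\infty$ as $k\to\infty$, uniformly in $x\in J_k$. Hence there are $\lambda>1$ and an index $k_0$ with $f_k'\ge\lambda$ on $J_k$ for all $k\ge k_0$; enlarging $k_0$ so that the displayed lower bound exceeds $2$ yields the final claim with $\lambda = 2$.

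I expect the genuinely delicate point to be the passage from $\varkappa$ to $\eta$ in the first assertion: the coefficient bound is immediate from disjointness, but controlling the \emph{nonlinearity} $\eta$ (which is what the construction of the absolutely continuous invariant measure ultimately requires) needs the chain-rule estimate above, where one must be careful that the contraction factors $(\psi^i)'(z)$ are themselves bounded by the coefficient estimates for the shorter compositions $f^i|_{J_i}$, so that no circularity creeps in. The expansion statement, by contrast, is a soft consequence of bounded distortion and the fact that fixed-size images arise from shrinking domains.
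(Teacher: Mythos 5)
Your proposal is correct and takes essentially the same route as the paper: both rest on the disjointness of the intervals $J_k,J_{k-1},\dots,J_1$ (giving the uniform coefficient bound $\varkappa(f^k;J_k)\le \|\log f'\|_{C^1}(a-x_0)$), deduce the expansion claim from $|J_k|\to 0$ together with the mean value theorem, and upgrade to the distortion \emph{norm} by feeding the coefficient bounds for the shorter compositions $f^i|_{J_i}$ back into the estimate. The only cosmetic difference is in that last step: you differentiate the telescoping identity for $\log(\psi^k)'$ and use the pointwise bound $(\psi^i)'\le e^{C}|J_i|/|J|$, whereas the paper bounds $\varkappa\bigl(f_k;f^{-k}(J')\bigr)\le C'|J'|$ for subintervals $J'\subset J$ via $|f^{-i}(J')|\le C|J'|\,|J_i|/|J|$ --- the integrated form of the very same estimate.
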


\begin{proof}
First notice that the distortion coefficients of all of the maps~$f_k$
are uniformly bounded. Indeed, since the intervals $J_k$ are disjoint,
\begin{equation}\label{eq:exit-distortion}
\varkappa(f^k;J_k) \le \|\log (f') \|_{C^1} \sum_{i=0}^{k-1} |f^i(J_k)| =
\|\log (f') \|_{C^1} \sum_{i=1}^{k} |J_i| \le {} \| \log (f')\|_{C^1} (b-x_0),
\end{equation}
and since the right hand expression does not depend on~$k$, this
provides the uniform bound for the distortion coefficients. Now
letting $C := \exp \big( \|\log f'\|_{C^1} (b-x_0) \big)$,
the above estimate gives, for all $k \geq 1$ and all $x \in J_k$,
$$
(f^k)'(x)\ge \frac{1}{C} \cdot \frac{|J|}{|J_k|}.
$$
Moreover, since the series $\sum_k |J_k|$ converges, the length $|J_k|$ goes to zero.
In particular, there exists $k_0 \in \mathbb{N}$ such that, for all $k \ge k_0$,
one has \hspace{0.01cm} $|J_k|\le |J| / 2C$. \hspace{0.01cm}
This immediately yields, for all $k \ge k_0$ and all $x \in J_k$,
$$
(f_k)'(x)\ge \frac{|J|}{C|J_k|}\ge 2,
$$
thus proving the second claim of the lemma. To prove the first claim notice that,
according to the control for the distortion coefficients already established,
for each interval $J'\subset J$ and each $i \!\in\! \{1,\ldots,k\}$ we have
$$
|f^{-i}(J')|\le C |J'| \cdot \frac{|J_i|}{|J|}.
$$
Hence,
$$
\varkappa(f_k ; f^{-k}(J')) \le
\|\log f'\|_{C^1} \sum_{i=1}^{k} C |J'| \cdot \frac{|J_i|}{|J|} \le
\frac{C \|\log f'\|_{C^1} (b-x_0)}{|J|}\cdot |J'| = C' |J'|,
$$
which gives $\eta(f_k ; f^{-k} (J')) \leq C'$, thus finishing the proof.
\end{proof}

According to~\cite{mane}, the preceding lemma guarantees the hypotheses which 
ensure the existence of an absolutely continuous ergodic invariant measure 
for~$\Phi$ which is equivalent to the Lebesgue one. Now remark that the return-time 
function $\tau$ is not locally integrable near the points~$1/3$ and~$1/2$ 
(this is due to the fact that these points are mapped by $\widetilde{\varphi}$ into
the parabolic fixed points $0$ and $1$ respectively). Hence, using the very same arguments as those
of the proof of Lemma~\ref{l:varphi}, this allows to finish the proof of Lemma~\ref{l:fractions}.
\end{proof}

We close this section with an explicit construction of conformal measures 
corresponding to Example~\ref{ex:PSLmin}. 

Notice that, due to formula~\eqref{deriv}, if a map $F\in\PSL_2(\bbZ)$ 
sends some point $(m:n)$ into $(a:b)$, where $m,n,a,b$ are integers, and $\gcd(m,n)=\gcd(a,b)=1$, then
$$
F'((m:n))=\left(\frac{\|(m,n)\|}{\|(a,b)\|}\right)^2.
$$
On the other hand, if $\delta>1$, then the sum
$$
S_{\delta}:=\sum_{\gcd(m,n)=1} \frac{1}{\|(m,n)\|^{2\delta}}
$$
is finite. One can then easily see that the measure 
$$
\mu_{\delta}:= \frac{1}{S_{\delta}} \sum_{\gcd(m,n)=1} \frac{\Dirac_{(m:n)}}{\|(m,n)\|^{2\delta}}
$$
is $\delta$-conformal.


\section{Proofs}\label{s:proof}

\subsection{Actions with property~\smin}
\begin{proof}[Proof of Theorem~\ref{description}]
We begin by dealing with the first claim. For this we notice that the set
$\NE (G)$ is closed, since it is a (countable) intersection of closed sets:
$$\NE(G)=\bigcap_{g\in G} \{x\mid g'(x)\le 1\}.$$
Therefore, the finiteness of $\NE(G)$ follows directly from the following 
\begin{lemma}\label{p:isolated}
The set $\NE(G)$ is made up of isolated points.
\end{lemma}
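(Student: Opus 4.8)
The plan is to fix $x\in\NE(G)$ and produce, on each side of $x$, a one-sided neighborhood containing no non-expandable point; this immediately gives that $x$ is isolated. By symmetry it suffices to treat the right side. Property~\smin\ furnishes an element $g_+$ fixing $x$ with no other fixed point in some $(x,x+\eps)$. Replacing $g_+$ by $g_+^{-1}$ if necessary (both lie in $G$ and both fix $x$ with no other fixed point nearby), I may assume the element, call it $g$, is \emph{topologically repelling from the right}, i.e.\ $g(y)>y$ for $y\in(x,x+\eps)$. Then for any $z\in(x,x+\eps)$ the backward orbit $z_n:=g^{-n}(z)$ is strictly decreasing and bounded below by $x$, hence converges to a fixed point, which must be $x$; so $z_n\downarrow x$. (Incidentally, non-expandability of $x$ together with $g$ being repelling forces $g'(x)=1$, but I will not need this.) Set $I_n:=[z_{n+1},z_n]$. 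These intervals tile $(x,z]$, so in particular $\sum_{n\ge0}|I_n|=z-x\le\eps$ and $|I_n|\to0$, while $g$ acts as a shift on them: $g^i(I_n)=I_{n-i}$ for $0\le i\le n$.

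The core of the argument is a \emph{uniform} distortion bound for the high iterates $g^n\colon I_n\to I_0$. Applying Proposition~\ref{p:sum} with the one-element generating set $\mcF=\{g\}$ (finite, hence bounded in the $\Diff^2$-norm) to the composition $g^n$ on the interval $I_n$, I get
$$
\varkappa(g^n;I_n)\ \le\ C_{\mcF}\sum_{i=0}^{n-1}|g^i(I_n)|\ =\ C_{\mcF}\sum_{k=1}^{n}|I_k|\ \le\ C_{\mcF}\,(z-x)\ \le\ C_{\mcF}\,\eps\ =:\ \kappa_0,
$$
the point being that the forward images $g^0(I_n),\dots,g^{n-1}(I_n)=I_n,\dots,I_1$ are pairwise disjoint, so their total length is controlled by the single bound $\eps$, \emph{independently of} $n$. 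Combining this with the Mean Value Theorem (which gives some $\zeta\in I_n$ with $(g^n)'(\zeta)=|I_0|/|I_n|$), I obtain, for every $w\in I_n$,
$$
(g^n)'(w)\ \ge\ e^{-\kappa_0}\,\frac{|I_0|}{|I_n|}.
$$
Since $|I_0|$ is fixed and $|I_n|\to0$, there is $N_0$ such that the right-hand side exceeds $1$ for all $n\ge N_0$. As the $I_n$ partition $(x,z]$, every point $w\in\bigcup_{n\ge N_0}I_n=(x,z_{N_0}]$ satisfies $(g^n)'(w)>1$ for the index $n$ with $w\in I_n$, hence is expandable. Thus $(x,z_{N_0})$ contains no non-expandable point.

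Running the same argument on the left with the element $g_-$ provided by property~\smin\ gives a left neighborhood $(x-\eps',x)$ free of non-expandable points, and the two halves together show that $x$ is isolated in $\NE(G)$. I expect the only genuinely delicate point to be the one already isolated above: recognizing that the summability $\sum_n|I_n|\le\eps$, coming purely from the fact that the $I_n$ are disjoint subintervals of a fixed interval, is exactly the input that Proposition~\ref{p:sum} needs to bound the distortion of $g^n$ on $I_n$ \emph{uniformly in $n$}. Once this uniformity is in hand, the expansion estimate $(g^n)'|_{I_n}\gtrsim |I_n|^{-1}\to\infty$ is automatic, and no control on the order of tangency of $g$ at $x$ (which could be arbitrarily high) is required.
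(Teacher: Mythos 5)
Your proof is correct and follows essentially the same route as the paper: the paper also takes the repelling element $g_+$ given by property~\smin, considers the fundamental domains $J_k=g_+^{-k}(J_0)$ accumulating on the fixed point, and obtains uniform expansion $(g_+^k)'\ge 2$ on $J_k$ for large $k$ from the uniform distortion bound coming from the disjointness of the domains. The only difference is cosmetic: the paper delegates this distortion-plus-expansion estimate to its Lemma~\ref{l:fixed-exp}, whereas you inline exactly that argument via Proposition~\ref{p:sum} and the Mean Value Theorem.
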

\begin{proof}
For a fixed $y \in \NE(G)$ we will find an interval of the form $(y,y+\delta)$
which does not intersect $\NE(G)$. The reader will notice that a similar argument
provides an interval of the form $(y - \delta',y)$ also disjoint from $\NE(G)$.

By property~\smin, there exist $g_+ \in G$ and $\varepsilon > 0$ such
that $g_+(y)=y$ and such that~$g_+$ has no other fixed point in
$(y,y+\varepsilon)$. Changing $g_+$ by its inverse if necessary, we
may assume~$y$ to be a right topologically repelling fixed point of
$g_+$. Let us consider the point $\bar{y} := y + \varepsilon/2 \in
(y,y+\varepsilon)$, and for each integer $k \geq 0$ let $\bar{y}_k :=
g_+^{-k}(\bar{y})$ and $J_k := (\bar{y}_{k+1},\bar{y}_{k})$. Taking
$a=\bar{y}_1, b=\bar{y}$, and applying Lemma~\ref{l:fixed-exp}, 
we see that for some $k_0 \in \mathbb{N}$ one has $(g_+^k)'(x) \ge 2$ 
for all $k\ge k_0$ and all $x \in J_k$. Hence, $\NE(G) \cap J_k =
\emptyset$, which clearly implies that $\NE(G)\cap
(y,\bar{y}_{k_0})=\emptyset$, thus finishing the proof.
\end{proof}

According to the proof above, for each point $y \in \NE(G)$ 
one can fix an interval $I_y^+ := (y,y+\delta^{+})$ (resp.
$I_{y}^-:=(y-\delta^{-},y)$), a number $k_0^+$ (resp. $k_0^-$), and an
element $g_+$ (resp. $g_-$) in $G$ having~$y$ as a right (resp.
left) topologically repelling fixed point and with no other fixed
point than $y$ in the closure of $I_{y}^+$ (resp. of $I_{y}^-$) and
such that, if for $x \in I_{y}^+$ (resp. for $x \in I_{y}^-$) we
take the smallest integer $n \geq 0$ such that $g_+^n (x) \notin
I_y^+$ (resp. $g_-^n (x) \notin I_y^-$), then $(g_+^{n+k_0^+})'(x)
\geq 2$ (resp. $(g_-^{n+k_0^-})'(x) \geq 2$). We then let
$$
U_{y}:=I_{y}^+ \cup I_{y}^- \cup \{y\}.
$$
By definition (and continuity), for every point $y \notin \NE(G)$ there exist
$g = g_y \in G$ and a neighborhood $V_y$ of $y$ such that $\inf_{V_y} g' > 1$. The
sets $\{U_y \mid y \in \NE(G) \}$ and $\{V_y \mid y \notin \NE(G) \}$ form an open cover of
the circle, from which we can extract a finite sub-cover
$$
\{U_y \mid y \in \NE(G) \} \bigcup \{V_{y_1},\dots, V_{y_k} \}.
$$
Let
$$
\lambda := \min \big\{2, \inf_{V_{y_1}} g_{y_1}', \dots , \inf_{V_{y_k}} g_{y_k}' \big\}.
$$
Since $\lambda$ is the minimum among finitely many numbers greater than~$1$, we have $\lambda>1$.

Now for every $x \in S^1$ either $x \in \NE(G)$ or $x$ lies inside one of the sets $I_y^{\pm}$
or $V_{y_j}$. In the latter case, there exists a map $g\in G$ such that $g'(x) \ge \lambda$, and
we can take the image point $g(x)$ and repeat the procedure. Continuing in this way, we see that
if we do not fall into a point in $\NE(G)$ by some composition, then we can always continue
expanding by a factor at least equal to $\lambda$ by some element in $G$. Therefore, for each
point not belonging to the orbit of $\NE(G)$, the set of derivatives $\{g'(x) \mid g \in G\}$
is unbounded. Since for a point $x$ in the orbit of $\NE(G)$ this set is obviously bounded,
this proves the second claim of Theorem~\ref{description}.

\vspace{0.25cm}

To complete the proof of the theorem, the only conclusion which is left corresponds
to that of the ergodicity of the action. Thus, let $A \subset S^1$ be an invariant
measurable set of positive Lebesgue measure, and let~$a$ be a density point in~$A$
not belonging to the orbit of~$\NE(G)$ (notice that, since this orbit is countable,
such a point $a$ exists). Then the expansion procedure works by applying the
``exit-maps'' $g_{\pm}^{n+k_0^{\pm}}$ 
to points in $I_y^+ \cup I_y^-$ and the map $g_{y}$ to points
in $V_{y}$. Now what we need to do is to control the distortion of these compositions in
a small neighborhood of $a$ until its image reaches a ``macroscopic'' length. Although
this can be done in terms of distortion coefficients, we prefer working directly with
the derivatives of the maps which are involved, since this approach provides another 
way to deal with the ergodicity conjecture and allows to state later an interesting problem,
namely Question~\ref{q:C-expansion}.

To simplify, in what follows to our prescribed system of generators we add 
the elements of the form $g_+$ and $g_{-}$, as well as their inverses. The 
main issue below consists in controlling the sum of the derivatives along 
a sequence of compositions by the derivative of the whole composition.

\begin{lemma}\label{l:one-step}
There exists a constant $C_1>0$ such that, for every $x\notin \NE (G)$, one can find
a composition $f_n\circ\dots \circ f_1$ of elements $f_1,\dots,f_n$ in $\mcF$ such
that $(f_n\circ\dots\circ f_1)'(x)\ge \lambda$ and
\begin{equation}\label{eq:sum-ld}
\frac{\sum_{j=1}^n (f_j\circ\dots\circ f_1)'(x)}{(f_n\circ\dots\circ
f_1)'(x)} \le C_1.
\end{equation}
\end{lemma}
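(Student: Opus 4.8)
The plan is to produce, for each $x\notin\NE(G)$, an explicit expanding word and then to bound the numerator of~\eqref{eq:sum-ld} by reducing it to a sum of lengths of \emph{pairwise disjoint} intervals, which is automatically controlled by the total length of the circle. I would split the argument according to the finite open cover $\{U_y \mid y\in\NE(G)\}\cup\{V_{y_1},\dots,V_{y_k}\}$ built above: since $x\notin\NE(G)$, the point $x$ lies either in some $V_{y_j}$ or in one of the intervals $I_y^{\pm}$ attached to a non-expandable point~$y$.

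In the expandable case $x\in V_{y_j}$ (the easy one) I would take $f_n\circ\dots\circ f_1$ to be a fixed word in $\mcF$ representing $g_{y_j}$. Then $(f_n\circ\dots\circ f_1)'(x)=g_{y_j}'(x)\ge\lambda$ by construction, while every partial derivative $(f_i\circ\dots\circ f_1)'$ is a continuous function on the compact circle, hence uniformly bounded. As there are only finitely many elements $g_{y_1},\dots,g_{y_k}$, each with a chosen word of bounded length, the ratio in~\eqref{eq:sum-ld} is bounded by a constant depending only on these data.

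The substantial case is $x\in I_y^+$ (the case $x\in I_y^-$ being symmetric, with $g_-$ replacing $g_+$). Here I would use the exit word $g_+^{\,N}$, $N:=n+k_0^+$, where $n$ is the first exit time of $x$ from $I_y^+$; by the construction recalled after Lemma~\ref{p:isolated} one has $(g_+^{\,N})'(x)\ge 2\ge\lambda$. Setting $x_m:=g_+^m(x)$ and using the chain rule, I rewrite each term of the numerator as
\[
\frac{(g_+^{\,j})'(x)}{(g_+^{\,N})'(x)}=\frac{1}{(g_+^{\,N-j})'(x_j)}.
\]
Now comes the key geometric observation: with the intervals $J_k=(\bar y_{k+1},\bar y_k)$ of the proof of Lemma~\ref{p:isolated}, if $x\in J_K$ then $x_j\in J_{K-j}$, so as $j$ decreases the points $x_j$ sweep out the \emph{disjoint} intervals $J_m$ (and $N=K+1$, so $N-j=m+1$ with $m=K-j$). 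Invoking the distortion estimate established inside the proof of Lemma~\ref{l:fixed-exp}, namely $(g_+^{\,m})'\ge \tfrac1C\,|J|/|J_m|$ on $J_m$ for $m\ge 1$, together with $\inf_J g_+'>0$ (a positive $C^2$ derivative on a compact arc), I obtain
\[
\frac{1}{(g_+^{\,N-j})'(x_j)}\le \const\cdot |J_m| \qquad (x_j\in J_m,\ m\ge 1).
\]
Summing over these $j$ gives a contribution at most $\const\cdot\sum_m |J_m|\le \const\cdot|S^1|$, precisely because the $J_m$ are disjoint; the two remaining terms (those with $x_j\in J_0=J$ or $x_j\in J_{-1}$) are bounded individually by $1/\inf_J g_+'$ and $1$. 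This yields a uniform bound on~\eqref{eq:sum-ld}, and taking $C_1$ to be the largest constant produced across the (finitely many) cases finishes the proof.

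The main obstacle is exactly this last estimate. A crude bound such as $(g_+^{\,m})'\ge 2$ is useless, since it does not render the reciprocals summable. What saves the argument is that those reciprocals are comparable to the lengths $|J_m|$ of the disjoint preimage intervals, so the sum collapses to something controlled by $|S^1|$ — the very same disjointness mechanism already exploited in~\eqref{eq:lsum} and in Lemma~\ref{l:fixed-exp}. The only bookkeeping to check carefully is the index matching $N-j=m+1$, which is routine once the exit time $n$ is related to the level $K$ with $x\in J_K$.
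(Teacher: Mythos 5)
Your proof is correct and follows essentially the same route as the paper's: near a non-expandable point you take iterates of $g_+$ adapted to the fundamental-domain decomposition $\{J_m\}$, and you control the sum in \eqref{eq:sum-ld} through the disjointness of the $J_m$'s, which is exactly the mechanism the paper exploits via Proposition~\ref{p:sum}. The only difference is bookkeeping: you rewrite each term of the ratio via the chain rule as a reciprocal $1/(g_+^{N-j})'(x_j)$ (the identity \eqref{eq:sum-inverses} that the paper introduces only in the next lemma) and then invoke the pointwise estimate $(g_+^m)'\ge \frac{1}{C}\,|J|/|J_m|$ from the proof of Lemma~\ref{l:fixed-exp}, whereas the paper bounds numerator and denominator separately by $\exp(\pm C_{\mcF})$ times ratios of interval lengths --- the same distortion-control argument packaged differently.
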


\begin{proof}
A compactness type argument reduces the general case to the study of points 
in (arbitrarily small) neighborhoods of $\NE$. We can work in a right neighborhood 
(the case of a left neighborhood is similar) of some point $y\in \NE$. We will use 
the notation of the proof of Lemma~\ref{p:isolated}: take the map $g_+\in G$
that has $y$ as a right-repelling fixed point, a point $\bar{y}$ within the 
right basin of repulsion of~$y$, and denote 
$$
\bar{y}_k:=g_+^{-k}(y), \quad J_0:=[\bar{y}_1,\bar{y}), \quad J_k:=g_+^{-k}(J_0). 
$$
We know that for some $k_0^+$ one has $(g_+^n)'(x)\ge \lambda$ for all $n\ge  k_0^+$ 
and all $x\in J_n$.
So, for every $x\in I_y^+:=(y,\bar{y}_{k_0^+})$ we take $n \in \mathbb{N}$ such that 
$x \in J_n$, and we put $f_1 = \ldots = f_n := g_+$. It suffices now to estimate the 
quotient~\eqref{eq:sum-ld}. To do this, we notice that Proposition~\ref{p:sum} easily
implies that
$$
\sum_{j = 1}^n (g_+^j)'(x) \leq \exp(C_{\mathcal{F}}) \cdot
\frac{\sum_{j=1}^{n} |g_+^j (J_n)|}{|J_n|} \leq
\frac{\exp(C_{\mcF})}{|J_n|}$$ and
$$
(g_+^n)'(x) \geq \exp(-C_{\mathcal{F}}) \cdot \frac{|g^n
(J_n)|}{|J_n|} = \exp(-C_{\mathcal{F}}) \cdot \frac{|J_0|}{|J_n|}.
$$
Therefore,
$$
\frac{\sum_{j=1}^n (f_j\circ\dots\circ f_1)'(x)}{(f_n\circ\dots\circ f_1)'(x)}
= \frac{\sum_{j = 1}^n (g_+^j)'(x)}{(g_+^n)'(x)}
\leq \frac{\exp(2 C_{\mathcal{F}})}{|J_0|}.
$$
\end{proof}

The previous lemma provides us a natural ``expansion procedure'' which yields the following
\begin{lemma}\label{l:derivatives-quotient}
There exists a constant $C_2$ such that, for every point $x$ which does not belong to the
orbit of $\NE(G)$ and every $M > 1$, one can find $f_1,\ldots,f_n$ in $\mcF$ such that the
composition $f_n \circ \dots \circ f_1$ has derivative greater than or equal to $M$ at~$x$ and
\begin{equation}\label{eq:sums-ld-2}
\frac{\sum_{j=1}^n (f_j\circ\dots\circ f_1)'(x)}{(f_n\circ\dots\circ f_1)'(x)}\le C_2.
\end{equation}
\end{lemma}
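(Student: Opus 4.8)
The plan is to iterate Lemma~\ref{l:one-step} in consecutive blocks, stopping as soon as the accumulated derivative exceeds $M$, and then to control the full partial sum by a geometric argument. Set $x_0 := x$. Since $x$ does not lie in the $G$-orbit of $\NE(G)$ and this orbit is $G$-invariant, none of the images of $x_0$ under elements of $G$ lie in that orbit either; in particular every point we reach stays outside $\NE(G)$ itself, so Lemma~\ref{l:one-step} applies at each stage. Applying it at $x_0$ first produces a composition $h_1$ of generators in $\mcF$ with $h_1'(x_0)\ge\lambda$ and one-block ratio at most $C_1$; I would then set $x_1 := h_1(x_0)$ and repeat, obtaining $h_2$ at $x_1$, then $h_3$ at $x_2$, and so on.

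Writing $H_i := h_i\circ\dots\circ h_1$ and $x_i := H_i(x_0)$, the chain rule gives $H_i'(x_0)=\prod_{\ell=1}^i h_\ell'(x_{\ell-1})\ge\lambda^i$. Hence after $k := \lceil \log M/\log\lambda\rceil$ blocks the full composition $H_k$ already satisfies $H_k'(x_0)\ge M$. Expanding $H_k$ as a single word $f_n\circ\dots\circ f_1$ in $\mcF$ yields the candidate composition; it remains to bound the ratio in~\eqref{eq:sums-ld-2}.

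The estimate rests on splitting the partial sum along the blocks. Every partial composition $f_j\circ\dots\circ f_1$ ending inside block $i$ factors, by the chain rule, as a partial composition of block $i$ evaluated at $x_{i-1}$ times $H_{i-1}'(x_0)$. Summing over the partials inside block $i$ and invoking the per-block bound of Lemma~\ref{l:one-step} (whose inner sum at $x_{i-1}$ is at most $C_1\,h_i'(x_{i-1})$), the contribution of block $i$ to the total sum is at most $C_1\,H_{i-1}'(x_0)\,h_i'(x_{i-1})=C_1\,H_i'(x_0)$. Therefore the whole partial sum is at most $C_1\sum_{i=1}^k H_i'(x_0)$.

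Finally, since each block multiplies the derivative by at least $\lambda>1$, one has $H_i'(x_0)\le\lambda^{-(k-i)}H_k'(x_0)$, so $\sum_{i=1}^k H_i'(x_0)\le H_k'(x_0)\sum_{m\ge 0}\lambda^{-m}=\frac{\lambda}{\lambda-1}H_k'(x_0)$. Dividing by $H_k'(x_0)=(f_n\circ\dots\circ f_1)'(x_0)$ gives the claim with $C_2:=C_1\,\lambda/(\lambda-1)$, a constant independent of $x$ and $M$. The only genuinely delicate point is the bookkeeping in the third paragraph: one must check that the chain-rule factorization really reassembles the global partial sum out of the per-block partial sums (so that each block contributes a clean factor $H_{i-1}'(x_0)$), and that the geometric domination in the last step uses only the \emph{uniform} per-block expansion $\lambda$, not the possibly much larger actual expansion of individual blocks. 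Everything else is routine.
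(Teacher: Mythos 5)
Your proof is correct and follows essentially the same route as the paper's: iterate Lemma~\ref{l:one-step} in consecutive blocks until the accumulated derivative exceeds $M$, factor each partial derivative through the completed blocks by the chain rule, apply the per-block bound $C_1$, and dominate by a geometric series using the uniform per-block expansion $\lambda$, arriving at the same constant $C_2 = C_1\lambda/(\lambda-1)$. The only cosmetic difference is that the paper organizes this bookkeeping through the identity~\eqref{eq:sum-inverses}, rewriting the ratio as a sum of derivatives of inverse compositions evaluated at the endpoint, whereas you keep everything in terms of forward derivatives; the two computations are identical.
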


\noindent{\em Proof.} Starting with $x_0 = x$ we let
$$
x_k := f_{k,n_k}\circ\dots\circ f_{k,1} (x_{k-1}),
$$
where the elements $f_{k,j} \in \mcF$ (chosen using Lemma~\ref{l:one-step}) satisfy
$$
\frac{\sum_{j=1}^{n_k} (f_{k,j} \circ\dots\circ f_{k,1})'(x_{k-1})}{(f_{k,n_k}\circ\dots\circ f_{k,1})'(x_{k-1})}
\le C_1, \qquad (f_{k,n_k} \circ\dots\circ f_{k,1})'(x_{k-1}) \ge \lambda.
$$
If we perform this procedure $K \geq \log(M) / \log(\lambda)$
times, then for the compositions
$$
F_k = (f_{k,n_k} \circ\dots\circ f_{k,1}) \circ \dots\circ (f_{1,n_1} \circ\dots\circ f_{1,1})
$$
we obtain
$$
F_K'(x) = \prod_{k=1}^K (f_{k,n_k}\circ\dots\circ f_{k,1})'(x_{k-1}) \ge \lambda^K \geq M.
$$

For estimating the quotient in the left hand side expression of~\eqref{eq:sums-ld-2}, we 
will write it differently. Namely, letting $y := f_n\circ\dots\circ f_1 (x)$, one
can easily check that
\begin{equation}\label{eq:sum-inverses}
\frac{\sum_{j=1}^n (f_j\circ\dots\circ f_1)'(x)}{(f_n\circ\dots\circ
f_1)'(x)} = \sum_{j=1}^n (f_{j+1}^{-1}\circ \dots f_n^{-1})'(y).
\end{equation}
In other words, providing a control for the quotient in~\eqref{eq:sums-ld-2} 
corresponding to an expansion at $x$ is equivalent to providing a control 
for the sum of the derivatives for the contraction at $y$. 

Now, to simplify the notation, we will denote by $\widetilde{F}_k$ the 
composition obtained at each step of the expansion procedure, that is, 
$$
\widetilde{F}_k := f_{k,n_k} \circ\dots\circ f_{k,1}.
$$
If we denote $y := F_K (x)$, then using~\eqref{eq:sum-inverses} we see 
that the left hand side expression in~\eqref{eq:sums-ld-2} is equal to
\begin{multline*}
\sum_{k=1}^K \sum_{j=1}^{n_k} \left((f_{k,j+1}^{-1} \circ \dots
\circ f_{k,n_k}^{-1})\circ (\widetilde{F}_{k+1}^{-1}\circ\dots\circ
\widetilde{F}_K^{-1})\right)' (y) =
\\
= \sum_{k=1}^K (\widetilde{F}_{k+1}^{-1}\circ\dots\circ
\widetilde{F}_K^{-1})' (y) \cdot \sum_{j=1}^{n_k} (f_{k,j+1}^{-1}
\circ \dots \circ f_{k,n_k}^{-1})'(x_{k}) \le
\\
\leq \sum_{k=1}^{K} \frac{1}{\lambda^{K-k}} \cdot C_1 \le
\frac{C_1}{1-\lambda^{-1}}. \qquad \square
\end{multline*}


Finally, the bound obtained in the previous lemma provides the desired control
of distortion for the compositions on a neighborhood which is expanded up to
a macroscopic length. More precisely, the following holds.


\begin{proposition}\label{p:final}
There exists $\varepsilon > 0$ such that, for every point $x$ not
belonging to the orbit of $\NE (G)$, there exists a sequence $V_k$ of
neighbourhoods of $x$ converging to $x$ such that to each $k \in
\mathbb{N}$ one can associate a sequence of elements
$f_1,\ldots,f_{n_k}$ in $\mcF$ satisfying $|f_{n_k} \circ \dots \circ f_1
(V_k)| = \varepsilon$ and $\varkappa (f_{n_k} \circ \dots \circ f_1 ;
V_k) \leq \log (2)$.
\end{proposition}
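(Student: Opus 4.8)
The plan is to feed the derivative–sum estimate of Lemma~\ref{l:derivatives-quotient} into the distortion bound of Proposition~\ref{bound}, observe that this forces the expanded interval to reach a \emph{fixed} macroscopic size, and then trim the neighbourhood by an elementary continuity argument so that its image has length exactly~$\varepsilon$. Throughout I may enlarge the constant $C_{\mcF}$ (harmless, since Proposition~\ref{p:sum} stays valid for any larger constant) so that $\log(2)/C_{\mcF}$ is smaller than the total length of the circle; this ensures the expanded intervals are genuine subarcs and never wrap around.

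First I fix a large number $M$ and apply Lemma~\ref{l:derivatives-quotient}: there is a composition $F=f_n\circ\dots\circ f_1$ with $F'(x)\ge M$ and, writing $F_j:=f_j\circ\dots\circ f_1$ and $F_0:=\id$, with $\sum_{j=0}^{n}F_j'(x)\le C_3\,F'(x)$, where $C_3:=1+C_2$ is independent of $x$ and $M$. I set $S:=\sum_{j=0}^{n}F_j'(x)$ and $\delta:=\log(2)/(2C_{\mcF}S)$. Since $\sum_{i=0}^{n-1}F_i'(x)\le S$, Proposition~\ref{bound} applies and gives $\varkappa(F;U_{\delta/2}(x))\le 2C_{\mcF}S\delta=\log(2)$.

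The key point is that the quotient bound makes the image length of $U_{\delta/2}(x)$ bounded below (and above) by constants not depending on $M$. Indeed $F'(x)\le S\le C_3F'(x)$, so $F'(x)\delta=\tfrac{\log 2}{2C_{\mcF}}\cdot\tfrac{F'(x)}{S}$ lies in $\big[\tfrac{\log 2}{2C_{\mcF}C_3},\ \tfrac{\log 2}{2C_{\mcF}}\big]$. As the distortion is at most $\log 2$, every $y\in U_{\delta/2}(x)$ satisfies $\tfrac12 F'(x)\le F'(y)\le 2F'(x)$, and integrating over the arc of length $|U_{\delta/2}(x)|=\delta$ yields $\tfrac12 F'(x)\delta\le |F(U_{\delta/2}(x))|\le 2F'(x)\delta$. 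In particular $|F(U_{\delta/2}(x))|\ge \tfrac{\log 2}{4C_{\mcF}C_3}$, uniformly in $M$ and $x$, while the upper bound (together with the arranged size of $C_{\mcF}$) keeps the image from covering the whole circle. I then declare $\varepsilon:=\tfrac{\log 2}{8C_{\mcF}C_3}$, so that $|F(U_{\delta/2}(x))|>\varepsilon$ in every instance.

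It remains to hit the length $\varepsilon$ exactly. Because $F$ is a diffeomorphism onto its image and $F(x)$ is interior to the arc $F(U_{\delta/2}(x))$ of length $>\varepsilon$, I choose an open interval $W\ni F(x)$ of length $\varepsilon$ with $\overline{W}\subset F(U_{\delta/2}(x))$ and put $V:=F^{-1}(W)$. Then $V$ is a neighbourhood of $x$ with $|F(V)|=\varepsilon$, and $\varkappa(F;V)\le\varkappa(F;U_{\delta/2}(x))\le\log 2$ by monotonicity of the distortion coefficient under restriction. Letting $M=M_k\to\infty$ forces $S\ge F'(x)\ge M_k\to\infty$, hence $\delta\to 0$ and $V=V_k\subset U_{\delta/2}(x)\to\{x\}$, producing the required sequence. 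The one step I would verify most carefully is precisely the uniform two-sided control of $F'(x)\delta$: it is the quotient estimate of Lemma~\ref{l:derivatives-quotient} that simultaneously prevents the expanded interval from shrinking below the macroscopic scale $\varepsilon$ and from blowing up past the length of the circle; the rest is the distortion machinery of Section~\ref{s:distortion} combined with the intermediate value theorem.
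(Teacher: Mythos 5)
Your proof is correct, and it rests on the same two pillars as the paper's own argument: the uniform quotient bound of Lemma~\ref{l:derivatives-quotient} and the distortion estimate of Proposition~\ref{bound}. The difference lies in how the macroscopic image is produced. The paper works at the image point $y=\bar F_n(x)$: it sets $V:=\bar F_n^{-1}(U_{\varepsilon/2}(y))$, observes that $\varkappa(\bar F_n;V)=\varkappa(f_1^{-1}\circ\dots\circ f_n^{-1};U_{\varepsilon/2}(y))$, and applies Proposition~\ref{bound} to the \emph{inverse} composition, whose relevant derivative sum at $y$ equals, by the identity~\eqref{eq:sum-inverses}, exactly the quotient controlled by $C_2$; the image of $V$ is then the ball $U_{\varepsilon/2}(y)$, of length exactly $\varepsilon$, with no further work. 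You instead apply Proposition~\ref{bound} \emph{forwards} at $x$, with the radius $\delta$ dictated by the large sum $S\le C_3F'(x)$; the quotient bound then gives the two-sided control of $F'(x)\delta$ between constants, hence a macroscopic image, and you recover the exact length $\varepsilon$ by a trimming argument. That trimming step is sound: since $|F(U_{\delta/2}(x))|\ge 2\varepsilon$ and $F(x)$ is interior, a sliding (intermediate value) argument produces the required $W$; in fact your own distortion bound applied to each half of $U_{\delta/2}(x)$ shows $F(x)$ lies at distance at least $\varepsilon$ from both endpoints, so even the centered ball $U_{\varepsilon/2}(F(x))$ would do, recovering the paper's conclusion verbatim. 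What each approach buys: the paper's inversion trick yields the exact image length for free at the cost of invoking~\eqref{eq:sum-inverses} a second time; your forward version avoids that re-inversion but needs the extra two-sided length estimate plus trimming. Two minor remarks: enlarging $C_{\mcF}$ to prevent wrap-around is unnecessary (a circle diffeomorphism maps arcs to arcs, so no injectivity issue arises), and your use of Proposition~\ref{bound} with the larger sum $\sum_{j=0}^{n}F_j'(x)$ in place of $\sum_{j=0}^{n-1}F_j'(x)$ is legitimate, since increasing $S$ only strengthens the hypothesis on $\delta$ while keeping the conclusion $2C_{\mcF}S\delta=\log 2$.
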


\begin{proof}
We will check the conclusion of the lemma for \hspace{0.01cm}
$\varepsilon = \log(2) / (2 C_{\mcF} C_2)$.  Indeed,
fix $M > 1$ and consider the sequence of compositions $f_n \circ
\dots \circ f_1$ associated to $x$ and $M$ provided by the previous
lemma. Denoting $\bar{F}_n := f_n \circ \dots \circ f_1$ and
$y:=\bar{F}_n(x)$, for the neighborhood $V := \bar{F}_n^{-1}
(U_{\varepsilon/2} (y))$ of $x$ we have 
$$
\varkappa \big( \bar{F}_n ; \bar{F}_n^{-1} (U_{\varepsilon/2} (y))
\big) = \varkappa \big( \bar{F}^{-1}_n ; U_{\varepsilon/2} (y) \big)
= \varkappa \big( f_1^{-1} \circ \dots \circ f_n^{-1},
U_{\varepsilon/2} (y) \big).
$$
By Proposition~\ref{bound}, the distortion coefficient of the
composition $f_1^{-1} \circ \dots \circ f_n^{-1}$ is bounded from
above by $\,\log (2)\,$ in a neighborhood of~$y$ of radius
$$
r := \frac{\log(2)}{4 C_{\mcF} S},
$$
where
\begin{equation}\label{eq:F-inverses}
S := \sum_{j=1}^{n} (f_{j+1}^{-1} \circ \dots \circ f_n^{-1})'(\bar{F}_n (x)).
\end{equation}
Now according to~\eqref{eq:sum-inverses}, the conclusion~\eqref{eq:sums-ld-2} allows to 
estimate the sum~\eqref{eq:F-inverses}:
$$
S = \frac{\sum_{j=1}^{n} (f_j\circ\dots\circ f_1)'(x)}{(f_n \circ\dots\circ
f_1)'(x)}  \le C_2.
$$
Therefore, the chosen $\varepsilon$ is less than or equal to~$2r$, which implies 
the desired estimate for the distortion. Finally, notice that
$$
|V|=\big| \bar{F}_n^{-1} (U_{\varepsilon/2} (y)) \big|
\leq \frac{|U_{\varepsilon/2} (y)| \hspace{0.01cm} \exp \big(
\varkappa \big( \bar{F}_n^{-1}; U_{\varepsilon/2} (y) \big) \big)}{
(f_n \circ \dots \circ f_1)'(x)} \leq \frac{2 \varepsilon}{M},
$$ 
and the last expression tends to zero as $M$ goes to infinity. Thus, for the 
family of neighborhoods $V=V_k$ obtained by the procedure described 
above for $M=k$ going to infinity, we indeed see that $V_k$ indeed collapse to~$x$, 
and this concludes the proof of the proposition.
\end{proof}

The latter proposition provides the desired bound for the distortion of the expansion on small 
neighborhoods of the density point~$a \in A$. By the arguments already mentioned 
in Section~\ref{ss:minimality}, this implies the ergodicity of the action, thus finishing the proof of Theorem~\ref{description}.
\end{proof}

\begin{proof}[Proof of Corollary~\ref{cor:independence}] Let $G$ be a finitely generated group of $C^2$ circle diffeomorphisms for
which property \smin\ holds with respect to a prescribed Riemannian metric. Given a new Riemannian
metric on $S^1$, let us denote by $c \!: S^1 \to \bbR$ the function which corresponds to
the quotient of both metrics. If we denote by $g'$ (resp. $g^{\bullet}$) the derivative of
$g \in G$ with respect to the original (resp. the new) metric, then one has
\hspace{0.01cm} $g^{\bullet} (x) = \frac{c(g(x))}{c(x)} g'(x)$. \hspace{0.01cm}
By Theorem~\ref{description}, a point $x \in S^1$ does not belong to the orbit of any non-expandable
point with respect to the initial metric if and only if the set $\{g'(x) \mid g \in G\}$
is unbounded. Now since the value of $c$ is bounded from 
above and away from zero, this happens if and only if the set
$\{g^{\bullet}(x) \mid g \in G\}$ is also unbounded. Therefore, every point $x$ which
is non-expandable for the new metric is in the orbit of some point $x_0$ which is 
non-expandable for the original one. By property \smin, there exists $g_-$ and $g_+$ in $\Gamma$
having $x_0$ as an isolated fixed point by the left and by the right, respectively.
Choosing $h \in G$ such that $x = h(x_0)$, this implies that $x$ is a fixed point
which is isolated by the left (resp. by the right) for $h g_- h^{-1}$ (resp.
$h g_+ h^{-1}$), and this shows that property \smin\ holds with respect to the new metric.
\end{proof}


We would like to close this section with a few comments on the idea of the 
proof of Theorem~\ref{description} above. 
For this, let us first introduce some terminology.

\begin{definition}
For $C > 0$ a point $x\in S^1$ is said to be $C$-\emph{distortion-expandable} for the
action of a finitely generated group~$G$ of $C^2$ circle diffeomorphisms if for each $M>1$
one can find $f_1,\ldots,f_n$ in $\mcF$ such that $(f_n \circ\dots\circ f_1)'(x) \geq M$
and
$$
\frac{\sum_{j=1}^{n} (f_j\circ\dots\circ f_1)'(x)}{(f_n\circ\dots\circ f_1)'(x)} \leq C.
$$
\end{definition}

The arguments of the proof of Proposition \ref{p:final} prove more generally that
if for some $C > 0$ the set of $C$-distortion-expandable points has positive Lebesgue
measure and the action is minimal, then the Lebesgue measure of this set equals $1$, and
the action is ergodic. Therefore, a positive answer for the following question would also
provide a positive answer for the ergodicity conjecture.

\begin{question}\label{q:C-expansion}
Is it true that, for every finitely generated non Abelian group of $C^2$ circle
diffeomorphisms whose action is minimal, there exists a constant $C>0$ such that
Lebesgue-a.e. point is $C$-distortion-expandable~?
\end{question}

\subsection{Conformal measures}

\begin{proof}[Proof of Theorem~\ref{ConformalStar}]
We will use the following fact from basic Measure Theory:
\begin{proposition}\label{p:measure}
For any two non-atomic measures $\mu_1$ and $\mu_2$ on the circle, 
the limit (in $[0,\infty]$)
\begin{equation}\label{eq:density}
\rho(x)=\rho_{\mu_1,\mu_2}(x)=
\lim_{\varepsilon\to 0} \frac{\mu_1(U_{\varepsilon}(x))}{\mu_2(U_{\varepsilon}(x))}
\end{equation}
exists for $(\mu_1+\mu_2)$-almost every~$x$. This limit is nonzero for $\mu_1$-almost 
every~$x$ and finite for $\mu_2$-almost every~$x$. The set 
$A_0:=\{x\mid \rho(x)=0\}$ (resp., $A_{\infty}:=\{x\mid \rho(x)=\infty\}$) correspond to 
the singular part of $\mu_2$ with respect to $\mu_1$ (resp. of $\mu_1$ with resp. to $\mu_2$). The restrictions of the measures $\mu_1$ and $\mu_2$ to the set $B:=\{0<\rho(x)<\infty\}$ 
are equivalent, and the density of $\mu_1$ w.r.t. $\mu_2$ on $B$ equals~$\rho$.
\end{proposition}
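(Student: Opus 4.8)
The plan is to reduce the statement to the differentiation theorem for a single reference measure and then to read off every assertion from Radon--Nikodym densities. I would set $\mu := \mu_1 + \mu_2$; since each $\mu_i$ is absolutely continuous with respect to $\mu$, the Radon--Nikodym theorem provides Borel densities $h_i := d\mu_i/d\mu \colon S^1 \to [0,1]$ with $h_1 + h_2 = 1$ for $\mu$-almost every point. The analytic heart of the argument is the Lebesgue differentiation theorem for the (possibly non-Lebesgue) Radon measure $\mu$: for every $\mu$-integrable $f$ and for $\mu$-almost every $x$,
$$
\frac{1}{\mu(U_{\varepsilon}(x))}\int_{U_{\varepsilon}(x)} f \, d\mu \xrightarrow[\varepsilon\to 0]{} f(x).
$$

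Applying this to $f = h_1$ and $f = h_2$ yields, for $\mu$-almost every $x$,
$$
\frac{\mu_i(U_{\varepsilon}(x))}{\mu(U_{\varepsilon}(x))} \xrightarrow[\varepsilon\to 0]{} h_i(x), \qquad i = 1,2.
$$
Here $\mu(U_{\varepsilon}(x)) > 0$ for all $\varepsilon$ as soon as $x \in \supp(\mu)$, a set of full $\mu$-measure, so these quotients make sense. Dividing the two relations gives
$$
\frac{\mu_1(U_{\varepsilon}(x))}{\mu_2(U_{\varepsilon}(x))} \xrightarrow[\varepsilon\to 0]{} \frac{h_1(x)}{h_2(x)} =: \rho(x) \in [0,\infty],
$$
with the convention $\rho(x) = +\infty$ when $h_2(x) = 0$; the relation $h_1 + h_2 = 1$ excludes the indeterminate case $h_1(x) = h_2(x) = 0$, so $\rho$ is unambiguously defined for $(\mu_1+\mu_2)$-almost every $x$. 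This settles the existence claim.

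It then remains to perform the (routine) measure-theoretic bookkeeping. First, $A_0 = \{h_1 = 0\}$ satisfies $\mu_1(A_0) = \int_{A_0} h_1\,d\mu = 0$, so $\rho \neq 0$ $\mu_1$-almost everywhere; symmetrically $A_{\infty} = \{h_2 = 0\}$ satisfies $\mu_2(A_{\infty}) = 0$, so $\rho < \infty$ $\mu_2$-almost everywhere. On $B = \{0 < h_1,\, h_2\}$ both $\mu_i|_B$ are equivalent to $\mu|_B$, hence to one another, and substituting $d\mu = h_2^{-1}\,d\mu_2$ into $d\mu_1 = h_1\,d\mu$ gives $d\mu_1 = (h_1/h_2)\,d\mu_2 = \rho\,d\mu_2$ on $B$, as claimed. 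Finally, $\mu_2|_{A_0}$ is carried by the $\mu_1$-null set $A_0$ and is therefore singular with respect to $\mu_1$, while on the complement $\{h_1 > 0\} = B \cup A_{\infty}$ one has $\mu_2(A_{\infty}) = 0$, so $\mu_2|_{\{h_1>0\}} = \mu_2|_B$ is absolutely continuous with respect to $\mu_1$; this exhibits $A_0$ as carrying precisely the $\mu_1$-singular part of $\mu_2$, and the symmetric statement for $A_{\infty}$ follows in the same way.

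The one genuine point requiring care is the differentiation theorem invoked above for the \emph{arbitrary} measure $\mu$: since the $U_{\varepsilon}(x)$ are centred intervals, the elementary Vitali-type covering argument valid for the Lebesgue measure must be replaced by the Besicovitch covering theorem on the circle, which supplies the weak-type maximal inequality for $\mu$ and hence the almost-everywhere convergence. Once this input is in hand, the remainder of the proof is purely formal.
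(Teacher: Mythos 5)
Your proof is correct, and there is nothing in the paper to compare it against: the authors state Proposition~\ref{p:measure} as ``a fact from basic Measure Theory'' and give no argument at all, so your write-up supplies exactly the folklore proof they take for granted. The structure is the standard one: differentiate both $\mu_1$ and $\mu_2$ against the reference measure $\mu=\mu_1+\mu_2$ via Radon--Nikodym densities $h_1,h_2$ with $h_1+h_2=1$, invoke the differentiation theorem for a general Radon measure to get $\mu_i(U_\varepsilon(x))/\mu(U_\varepsilon(x))\to h_i(x)$ $\mu$-a.e., and then read off all the assertions ($\rho=h_1/h_2$, the identification of $A_0$ and $A_\infty$ with the two singular parts, and $d\mu_1=\rho\,d\mu_2$ on $B$) by purely formal manipulations with the densities. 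You also correctly isolate the one non-trivial input --- the differentiation theorem for an arbitrary (non-Lebesgue) measure with centred intervals, which requires a Besicovitch-type covering argument rather than the plain Vitali lemma; it is worth noting that on the circle this is particularly painless, since the Besicovitch covering theorem in dimension one is elementary (overlap constant $2$). The only cosmetic point: when $h_2(x)=0$ you should say explicitly that for such ($\mu$-a.e.) $x$ the quotient $\mu_1(U_\varepsilon(x))/\mu_2(U_\varepsilon(x))$ tends to $+\infty$ because the numerator ratio tends to $1$ while the denominator ratio tends to $0$ (with the convention $a/0=+\infty$ for $a>0$ if $\mu_2(U_\varepsilon(x))$ actually vanishes); this is what your convention $\rho(x)=+\infty$ encodes, and it closes the argument with no gap.
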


Let us first consider the case of a minimal case (as we will see, the very 
same arguments can be used in the case of an exceptional minimal set).
Let $\mu$ be a conformal measure with some exponent~$\delta$ for an 
action satisfying the property~\smin, and assume that $\mu$ does not 
charge the orbit of $\NE(G)$.
Notice that an atom of~$\mu$ can be placed only at a point~$x$ with 
a bounded set of derivatives~$\{g'(x)\mid g\in G\}$. But we know from 
the second conclusion of Theorem~\ref{description} that such a point 
must belong to the orbit of a non-expandable one. So, as we assumed 
that the orbit of $\NE(G)$ is not charged, the measure~$\mu$ is non-atomic.

Now, let us take any point $x\notin G(\NE)$ and let us analyze the behaviour 
of the limit~(\ref{eq:density}) with the help of Proposition~\ref{p:final}. This proposition 
provides us a sequence of neighborhood $U_k$ of the point $x$, as well as 
expanding compositions $F_k:=f_{n_k}\circ\dots\circ f_1$, one has $\varkappa (F_k;
U_k) \leq \log (2)$. Hence, for every $y\in U_k$
\begin{equation}\label{eq:derivative-estimate}
\frac{|F_k(U_k)|}{2|U_k|}\le F_k'(y)\le \frac{2|F_k(U_k)|}{|U_k|}.
\end{equation}
From the definition of $\delta$-conformality it follows that
$$
\frac{1}{2^{\delta}} \cdot \left(\frac{|F_k(U_k)|}{|U_k|}\right)^{\delta} \mu(U_k) \le \mu(F_k(U_k)) \le 2^{\delta} \cdot \left(\frac{|F_k(U_k)|}{|U_k|}\right)^{\delta} \mu(U_k),
$$
and hence,
$$
\frac{1}{2^{\delta}} \cdot \left(\frac{|U_k|}{|F_k(U_k)|}\right)^{\delta} \mu(F_k(U_k)) \le \mu(U_k) \le 2^{\delta} \cdot \left(\frac{|U_k|}{|F_k(U_k)|}\right)^{\delta} \mu(F_k(U_k)).
$$
Since the length of the image $F_k(U_k)$ equals~$\varepsilon$ for every~$k$, 
the measures $\mu(F_k(U_k))$ are bounded from below independently 
on~$k$. Thus, we have 
\begin{equation}\label{eq:conformal-estimate}
c |U_k|^{\delta} \le \mu(U_k) \le C |U_k|^{\delta}
\end{equation}
for some constants $C>c>0$.

Now, let us consider the three possible cases for the conformal exponent: $\delta<1$, 
$\delta=1$, and $\delta>1$. 
In the first case, 
we have
$$
\lim_{k\to\infty} \frac{\mu(U_k)}{|U_k|} \ge \lim_{k\to\infty} \frac{c|U_k|^{\delta}}{|U_k|} =\infty.
$$
So, for a subsequence of neighborhoods surrounding~$x$ (recall that the neighborhoods~$U_k$ provided by Proposition~\ref{p:final} are not of arbitrary size, though they collapse to~$x$), the 
``density'' limit~\eqref{eq:density} is infinite. 
But due to Proposition~\ref{p:measure}, this limit should be finite for Lebesgue-almost every point~$x$. This contradiction shows that this case is impossible.

On the other hand, if $\delta>1$, 
$$
\lim_{k\to\infty} \frac{\mu(U_k)}{|U_k|} \le \lim_{k\to\infty} \frac{C |U_k|^{\delta}}{|U_k|} =0.
$$
However, according to Proposition~\ref{p:measure}, the limit should be positive 
for $\mu$-almost every~$x$. Since by assumption the measure $\mu$ does not charge 
the set~$G(\NE)$, this gives a contradiction which makes this case impossible. 

The only case which is left is $\delta=1$. But in this case the estimate~\eqref{eq:conformal-estimate} implies that the measure $\mu$ is absolutely continuous with respect to the Lebesgue one, 
and its density (due to the fact that $\mu$ is 1-conformal) is an invariant function. Since the 
Lebesgue measure is ergodic, this density is constant, and hence the measure~$\mu$ is 
proportional to the Lebesgue one, and actually equal to it due to the normalization. This 
concludes the proof in the minimal case.

Assume now that the group $G$ acts with an exceptional minimal set~$\Lambda$ 
and satisfies property~\Ls. Once again, we see that if a conformal measure does not 
charge~$G(\NE)$, then it must be non-atomic. We still have the same 
estimates~\eqref{eq:derivative-estimate} 
and~\eqref{eq:conformal-estimate} on the 
derivative and on the quotient of measures, though the neighborhoods are now considered 
only for points in~$\Lambda$. The argument excluding the exponent $\delta>1$ 
still works: the density limit $\rho_{\mu,\Leb}$ of the measure $\mu$ w.r.t. 
the Lebesgue one 
cannot be zero $\mu$-almost everywhere. 

Similar arguments to the above ones exclude the case~$\delta=1$: indeed, if $\delta$ was equal to~$1$, this would imply that the measure~$\mu$ is absolutely continuous with respect to the Lebesgue one, which is impossible, since the set~$\Lambda$ has zero Lebesgue measure.

Finally, the case $\delta<1$ becomes possible: the density limit of~$\mu$ with respect to the Lebesgue measure will be infinite only at the points of~$\Lambda$. However, there can be only one conformal exponent~$\delta$ and only one conformal measure~$\mu$ corresponding to this exponent. Indeed, let $\delta_1\ge \delta_2$ be two conformal exponents corresponding to conformal measures~$\mu_1$ and $\mu_2$, respectively. Then, by re-applying the same arguments of control of distortion as above, and noticing that the measures $\mu_1,\mu_2$ of an interval $F_k(U_k)$ of length~$\varepsilon$ are bounded from below, we deduce from~\eqref{eq:derivative-estimate} that
\begin{equation}\label{eq:deltas}
c |U_k|^{\delta_1-\delta_2} \le \frac{\mu_1(U_k)}{\mu_2(U_k)} \le C |U_k|^{\delta_1-\delta_2}
\end{equation}
for some constants $C>c>0$.

If $\delta_1>\delta_2$, the inequalities~\eqref{eq:deltas} imply that the 
density limit~\eqref{eq:density} for the measures $\mu_1,\mu_2$ is zero 
on a subsequence for every point $x\in \Lambda\setminus G(\NE)$. However, 
this is impossible, since this density limit should be positive for $\mu_1$-almost 
every point of $\supp(\mu_1)=\Lambda$. Finally, if $\delta_1=\delta_2$, these 
conformal measures are equivalent, 
and the density $\frac{d\mu_1}{d\mu_2}$ is an invariant function. So, once 
we prove that the measure $\mu_1$ is ergodic, this will imply that~$\mu_1=\mu_2$.

The ergodicity of the measure $\mu_1$ can be deduced in the same way 
as in Theorem~\ref{description} was deduced the 
ergodicity of the Lebesgue measure for minimal case. 
Namely, if~$A\subset\Lambda$ is a measurable invariant set, 
then $\mu_1$-almost every point in~$A$ is its $\mu_1$-density point. 
By expanding arbitrarily small neighborhoods of such 
a point, using the fact that (due to the minimality of the action on~$\Lambda$) 
one has $\supp(\mu_1)=\Lambda$, and choosing a subsequence among the 
expanded intervals, at the limit we obtain an interval on 
which the points of~$A$ form a subset of full~$\mu_1$-measure. Due to the 
minimality, this implies that~$A$ has full $\mu_1$-measure. This concludes 
the proof of the ergodicity, and thus that of Theorem~\ref{ConformalStar}.
\end{proof}

\subsection{Random dynamics}

\begin{proof}[Proof of Theorem~\ref{thm:positive}]
Let $m$ be a measure on~$G$ having finite first word-moment and
such that there is no measure on the circle which is simultaneously invariant 
by all the maps in~$\supp(m)$. By P.~Baxendale's theorem (see Section~\ref{ss:Random}), 
there exists an ergodic stationary measure~$\nu$ such that the 
corresponding random Lyapunov exponent is strictly negative. 
We will prove that for $\nu$-almost every point~$x$ the Lyapunov 
expansion exponent at~$x$ is positive. More precisely, we will prove 
that
\begin{equation}\label{eq:L-estimate}
\lambda_{exp}(\nu;G;\mathcal{F}) \ge \frac{|\lambda_{RD}(\nu;m)|}{v_{\mcF}(m)},
\end{equation}
where $\lambda_{exp}(\nu;G;\mathcal{F})$ stands for the Lyapunov expansion exponent at $\nu$-almost every point (due to the ergodicity of the measure~$\nu$, this exponent is constant $\nu$-almost everywhere), and $v_{\mcF}(m)$ denotes the \emph{rate of escape} for the convolutions of $m$:
$$
v_{\mcF}(m)=\lim_{n\to\infty} \frac{1}{n} \int_{G^n} \|g_1\circ \dots \circ g_n \|_{\mcF} \, dm(g_1)\dots dm(g_n).
$$
Notice that a direct consequence of~\eqref{eq:L-estimate} is that 
$$
\lambda_{exp}(\nu;G;\mathcal{F}) \ge \frac{|\lambda_{RD}(\nu;m)|}{\int_G \|g\|_{\mcF} \, dm(g)}.
$$

To prove the estimate~\eqref{eq:L-estimate}, fix $\varepsilon>0$, and 
consider the skew-product map
$$
F:S^1\times G^{\bbN}\to S^1\times G^{\bbN}, \quad F(x,(g_i))=(g_1(x),(g_{i+1})).
$$
Since $\nu$ is an ergodic stationary measure, 
the Random Ergodic Theorem (see, \emph{e.g.},~\cite{Furman}) asserts that 
the $F$-invariant measure $\widetilde{\nu}=\nu\times m^{\bbN}$ is ergodic. 

For each $n\in\bbN$, consider the sets 
$$
A_n:=\{(x,(g_i)) \mid \log (g_n\circ \dots \circ g_1)'(x) < -n(|\lambda_{RD}(m;\nu)|-\varepsilon) \},
$$
and
$$
B_n:=\{(x,(g_i)) \mid \|g_n\circ\dots\circ g_1\|_{\mcF} < n(v_{\mcF}(m)+\varepsilon) \}.
$$
Notice that the measures of both sets $A_n$ and $B_n$ tend to~$1$ as $n$ tends to infinity 
(this follows immediately from the definitions of the random Lyapunov exponent and 
of the rate of escape). Clearly, the same holds for the measures of the 
sets $F^n(A_n\cap B_n)$ (as $F$ preserves the measure~$\widetilde{\nu}$), as
well as for the $\nu$-measures of the projections of these sets on the circle. 
But a point~$y$ belongs to the projection
$C_n:=\pi_{S^1}(F^n(A_n\cap B_n))$ if and only if there exist~$x,g_1,\dots,g_n$ such that 
$$
y=(g_n\circ\dots\circ g_1)(x), \quad \log (g_n\circ \dots \circ g_1)'(x) < -n(|\lambda_{RD}(m;\nu)|-\varepsilon),
$$
$$
\text{and} \quad \|g_n\circ\dots\circ g_1\|_{\mcF} < n(v_{\mcF}(m)+\varepsilon). 
$$
This implies that for the composition $g_1^{-1}\circ \dots\circ g_n^{-1}$ one has 
$$
\frac{\log (g_1^{-1}\circ \dots\circ g_n^{-1})'(y)}{\|g_1^{-1}\circ \dots\circ g_n^{-1}\|_{\mcF}} \ge \frac{|\lambda_{RD}(m;\nu)|-\varepsilon}{v_{\mcF}(m)+\varepsilon}.
$$
As $\nu(C_n)\to 1$, the set of points $y$ belonging to an infinite number of sets $C_n$ is of full $\nu$-measure. Hence, 
$$
\lambda_{\exp}(\nu;G;\mcF)\ge \frac{|\lambda_{RD}(m;\nu)|-\varepsilon}{v_{\mcF}(m)+\varepsilon},
$$
and since $\varepsilon>0$ was arbitrary,
$$
\lambda_{\exp}(\nu;G;\mcF)\ge \frac{|\lambda_{RD}(m;\nu)|}{v_{\mcF}(m)},
$$
which concludes the proof of the theorem.
\end{proof}
As we already noticed in Remark~\ref{r:limit}, assuming more restrictive assumptions 
on the moments, one can prove that the ``exponentially expanding'' compositions can 
be chosen of any length. To do this, due to Borel-Cantelli Lemma, it suffices to check 
that the series $\sum_n (1-\nu(C_n))$ converges. And indeed, by establishing some the 
control for the ``large deviations'', one can show (under certain additional assumptions) 
that the measures of the sets $A_n$ and $B_n$ tend to~$1$ 
exponentially, which immediately implies this convergence. 

%
%

\section{Acknowledgments}

The authors would like to express their gratitude to \'Etienne Ghys and Yulij Ilyashenko for
many fruitful discussions and their interest in the problem. The second author would also like
to thank Frank Loray, Dominique Cerveau, Nicolas Monod, Pierre de la Harpe, and 
Vadim Kaimanovich, for valuable remarks and suggestions.

All the authors would like to thank the Universities of Geneva and Santiago de Chile for their hospitality,
and to acknowledge the partial support of the Swiss National Science Foundation as well as the
PBCT-Conicyt for the support via the Research Network on Low Dimensional Dynamics. The second
author was also partially supported by the Russian Foundation for Basic Research grants
7-01-00017-a and CNRS-L\_a 05-01-02801.


\begin{small}

\vspace{0.1cm}

Bertrand Deroin

Universit\'e Paris-Sud, Lab. de Math\'ematiques, B\^at 425

91405 Orsay Cedex, France

Bertrand.Deroin@math.u-psud.fr

\vspace{0.3cm}

Victor Kleptsyn

Institut de Recherches Math\'ematiques de Rennes (UMR 6625 CNRS)

Campus Beaulieu, 35042 Rennes, France

Victor.Kleptsyn@univ-rennes1.fr

\vspace{0.3cm}

Andr\'es Navas

Universidad de Santiago de Chile

Alameda 3363, Santiago, Chile

anavas@usach.cl

\end{small}

\end{document}